\theoremstyle{plain} 
\definecolor{midnightblue}{rgb}{0.1, 0.1, 0.44}
\definecolor{plum}{rgb}{0.56, 0.27, 0.52}
\definecolor{Plum}{rgb}{0.56, 0.27, 0.52}
\definecolor{patriarch}{rgb}{0.5, 0.0, 0.5}
\definecolor{darkgreen}{rgb}{0.0, 0.2, 0.13}
\definecolor{darkcerulean}{rgb}{0.03, 0.27, 0.49}
\definecolor{jade}{rgb}{0.0, 0.66, 0.42}
\newcommand{\rep}{\operatorname{rep}}
\newcommand{\Cat}{\operatorname{Cat}}
\newcommand{\Int}{\operatorname{Int}}
\newcommand{\GenJF}{\operatorname{GenJF}}
\newcommand{\GenRep}{\operatorname{GenRep}}
\newcommand{\JF}{\operatorname{JF}}
\newcommand{\add}{\operatorname{add}}
\newcommand{\Ker}{\operatorname{Ker}}
\newcommand{\Coker}{\operatorname{Coker}}
\newcommand{\Hom}{\operatorname{Hom}}
\newcommand{\Id}{\operatorname{Id}}
\newcommand{\IIm}{\operatorname{Im}}
\newcommand{\mult}{\operatorname{mult}}
\newcommand{\diag}{\operatorname{diag}}
\newcommand{\op}{\operatorname{op}}
\newcommand{\refl}{\operatorname{refl}}
\newcommand{\GK}{\operatorname{\mathsf{GK}}}
\newcommand{\NEnd}{\operatorname{\mathsf{N}End}}
\newcommand{\wt}{\operatorname{\mathsf{wt}}}
\newcommand{\End}{\operatorname{End}}
\newcommand{\vdim}{\operatorname{\pmb{\dim}}}
\newcommand{\Adds}{\operatorname{AddS}}
\newcommand{\Ind}{\operatorname{Ind}}
\newcommand{\Supp}{\operatorname{Supp}}
\newcommand{\rev}{\operatorname{rev}}
\newcommand{\C}{\mathbf{C}}
\newcommand{\F}{\mathbf{F}}
\newcommand{\A}{\mathbf A}
\newcommand{\B}{\mathbf B}
\newcommand{\E}{\mathbf E}
\newcommand{\tog}{\operatorname{tog}}
\newcommand{\compl}{\operatorname{compl}}
\newcommand{\eff}{\operatorname{eff}}
\author[B.~Dequêne]{Benjamin Dequêne}
\address[B.~Dequêne]{School of Mathematics, University of Leeds, United Kingdom}
\email{B.D.Dequene@leeds.ac.uk}
\date{\today}
\title[Canonical Jordan recoverability for $A_n$ type quivers]{Canonically Jordan recoverable categories for modules over the path algebra of $A_n$ type quivers}
\declaretheorem[numberwithin=section,name=Theorem,
refname={Theorem,Theorems},
Refname={Theorem,Theorems}]{theorem}
\declaretheorem[numberlike=theorem,name=Lemma,
refname={Lemma,Lemmas},
Refname={Lemma,Lemmas}]{lemma}
\declaretheorem[numberlike=theorem,name=Proposition,
refname={Proposition,Propositions},
Refname={Proposition,Propositions}]{prop}
\declaretheorem[numberlike=theorem,name=Corollary,
refname={Corollary,Corollaries},
Refname={Corollary,Corollaries}]{cor}
\declaretheorem[numberlike=theorem,name=Conjecture,
refname={Conjecture,Conjectures},
Refname={Conjecture,Conjectures}]{conj}
\declaretheorem[style=definition,numberlike=theorem,name=Definition,
refname={Definition,Definitions},
Refname={Definition,Definitions}]{definition}
\declaretheorem[style=definition,numberlike=theorem,name=Algorithm,
refname={Algorithm,Algorithms},
Refname={Algorithm,Algorithms}]{algo}
\declaretheorem[style=definition,numberlike=theorem,name=Example,
refname={Example,Examples},
Refname={Example,Examples}]{ex}
\declaretheorem[style=remark,numberlike=theorem,name=Remark,
refname={Remark,Remarks},
Refname={Remark,Remarks}]{remark}
\newcommand{\new}[1]{\textit{\textbf{\color{patriarch}{#1}}}}
\newcommand{\llrr}[1]{\llbracket #1 \rrbracket}
\begin{document}
	
	\begin{abstract} 
		Let $Q$ be a quiver of $A_n$ type and $\mathbb{K}$ be an algebraically closed field. A nilpotent endomorphism of a quiver representation induces a linear transformation of the vector space at each vertex. Generically, among all nilpotent endomorphisms of a fixed representation $X$, there exists a well-defined Jordan form of each of these linear transformations $\GenJF(X)$, called the generic Jordan form data of $X$. A subcategory of $\rep(Q)$ is Jordan recoverable if we can recover $X$ up to isomorphism from its generic Jordan form data.
		
		There is a procedure that allows one to invert the map from representations to generic Jordan form data. The subcategories for which this procedure applies are called canonically Jordan recoverable. We focus on the subcategories of $\rep(Q)$ that are canonically Jordan recoverable, and we give a combinatorial characterization of them.
	\end{abstract}
	
	\maketitle
	
	\tableofcontents
	
	\section{Introduction}
	\label{s;intro}
	
	\subsection{Jordan recoverability and canonical Jordan recoverability}
	\label{ss:JRandCJRintro}
	
	Let $Q$ be an $A_n$ type quiver. Consider $X$ a finite-dimensional representation of $Q$ over an algebraically closed field $\mathbb{K}$. Denote by $\NEnd(X)$ the set of nilpotent endomorphisms of $X$. Fix $N \in \NEnd(X)$. For each vertex $q \in Q_0$, the morphism $N$ induces a nilpotent endomorphism $N_q$ of $X_q$. We can extract from $N$ a sequence of integer partitions $\lambda^q \vdash \dim(X_q)$, which correspond to the Jordan block sizes of the Jordan form of each $N_q$. Write $\JF(N) = \pmb{\lambda} = (\lambda^q)_{q\in Q_0}$. Thanks to a result from \cite{GPT19}, for any $X \in \rep(Q)$, there is a dense open set in $\NEnd(X)$ on which $\JF$ is constant. We denote $\GenJF(X)$ this constant that we will refer to as the \new{generic Jordan form data} of $X$.
	
	Throughout the article, by subcategory, we mean a full subcategory closed under direct sums and direct summands. Our interest is to characterize the subcategories $\mathscr{C}$ of $\rep(Q)$ such that we can recover up to isomorphism $X \in \mathscr{C}$ from $\GenJF(X)$. Such a subcategory $\mathscr{C}$ is called \new{Jordan recoverable}. 
	
	In general, determining which subcategories of $\rep(Q)$ are Jordan recoverable remains difficult. For some cases, one can reconstruct $X$ from $\GenJF(X)$ thanks to the existence of a generic choice of a representation $Y$ in $\rep(Q)$ such that $Y$ admits a nilpotent endomorphism of Jordan form $\GenJF(X)$, and then we can ask if $Y$ is isomorphic to $X$. 
	
	Concretely, for all $\#Q_0$-tuples of integer partitions $\pmb{\lambda}$, denote $\rep(Q,\pmb{\lambda})$ the variety of representations of $Q$ which admit a nilpotent endomorphism of Jordan form $\pmb{\lambda}$. For a fixed subcategory $\mathscr{C}$ of $\rep(Q)$, we could try to find if for any $X \in \mathscr{C}$ there is a (Zariski) dense open set $\Omega$ in $\rep(Q,\GenJF(X))$, such that any $Y \in \Omega$ is isomorphic to $X$. Such a subcategory $\mathscr{C}$ is said to be \new{canonically Jordan recoverable}.
	
	Note that a subcategory that is Jordan recoverable is not necessarily canonically Jordan recoverable.
	\begin{ex} \label{A2ex}
		Let $Q$ be the following $A_2$ type quiver.
		\begin{center}
			\begin{tikzpicture}[->,line width=0.6mm,>= angle 60,color=black]
				\node (Q) at (-.7,0){$Q=$};
				\node (1) at (0,0){$1$};
				\node (2) at (2,0){$2$};
				\draw (1) -- node[above]{$\alpha$} (2);
			\end{tikzpicture}
		\end{center}
		The only subcategory of $\rep(Q)$ which is not Jordan recoverable is $\rep(Q)$ itself. Indeed, any strict subcategory $\mathscr{C}$ of $\rep(Q)$ is generated by at most two indecomposable representations, and the dimension vectors of these indecomposable representations are linearly independent. It means that we can recover a representation $\mathscr{C}$ from its dimension vector, and a fortiori, from its generic Jordan form. 
		
		However, $\rep(Q)$ is not Jordan recoverable: take for instance $X = S_1 \oplus S_2$ and $Y = P_1$;  they do not admit a nonzero nilpotent endomorphism ($X_i,Y_i$ are $1$-dimensional $\mathbb
		K$-vector-spaces for $i \in \{1,2\}$) and hence $\GenJF(S_1 \oplus S_2) = ((1),(1)) = \GenJF(P_1)$.
		
		Now we give an example of a category that is Jordan recoverable but not canonically Jordan recoverable. Let $\mathscr{C} = \add(S_1, S_2)$. Consider $X = S_1^a \oplus S_2^b$ with $a,b \in \mathbb
		N$. Any pair of nilpotent endomorphisms $(N_1, N_2)$, with $N_i : X_i \longrightarrow X_i$ for $i \in \{1,2\}$, endows $X$ with a nilpotent endomorphism $N=(N_1,N_2)$. A generic nilpotent endomorphism admits a Jordan form given by the tuple $((a),(b))$ of integer partitions. So $\GenJF(X) = ((a),(b))$ and we can check again that $\mathscr{C}$ is Jordan recoverable.  
		
		However, $\mathscr{C}$ is not canonically Jordan recoverable. Fix $X = S_1 \oplus S_2$. Then $\GenJF(X) = ((1),(1))$.  Let $Y \in \rep(Q)$ such that $Y$ admits a nilpotent endomorphism $N$ of the Jordan form $((1),(1))$. Thus $N = 0$. In such case, $Y_1 \cong \mathbb{K} \cong Y_2$ and $Y_\alpha = k \Id$. The endomorphism $N$ does not give any restriction on the value $k \in \mathbb{K}$.
		\begin{center}
			\begin{tikzpicture}[->,line width=0.6mm,>= angle 60,color=black]
				\node (Q) at (-.7,0){$Y\cong$};
				\node (1) at (0,0){$\mathbb{K}$};
				\node (2) at (2,0){$\mathbb{K}$};
				\draw (1) -- node[above]{$k$} (2);
				\node (Q) at (-.7,-2){$Y\cong$};
				\node (a) at (0,-2){$\mathbb{K}$};
				\node (b) at (2,-2){$\mathbb{K}$};
				\draw (a) -- node[below]{$k$} (b);
				
				\draw[line width=0.2mm] (1) -- node[left]{$0$} (a);
				
				\draw[line width=0.2mm] (2) -- node[right]{$0$} (b);
			\end{tikzpicture}
		\end{center}
		Only two choices give different representations $Y$ up to isomorphism: $k=0$ and $k \neq 0$. The first case returns $X$, while the second returns $P_1$. We get a dense open set $\Omega$ in the collection of representations admitting a nilpotent endomorphism of Jordan form $((1),(1))$ in which all the representations are isomorphic to $P_1$. Hence, we did not recover $X$, and $\mathscr{C}$ is not canonically Jordan recoverable as we claimed.
	\end{ex}
	This paper aims to give a combinatorial description of all canonically Jordan recoverable subcategories of $\rep(Q)$ for $Q$ being any $A_n$ type quiver and for $n \in \mathbb{N}^*$.
	
	\subsection{Adjacency-avoiding interval subsets} 
	\label{ss:IntroAdjavoid}
	
	Call \new{intervals} of $\{1, \ldots,n\}$ the sets $\{i, i+1, \ldots,j \}$ with $1 \leqslant i \leqslant j \leqslant n$. Fix an $A_n$ quiver $Q$. The intervals in $\{1, \ldots, n \}$ provide a natural description of $\rep(Q)$: the indecomposable representations are in one-to-one correspondence with the intervals of $\{1, \ldots, n\}$, and morphisms between two indecomposable representations are completely described in terms of specific subintervals of both corresponding intervals. \cref{s:TypeAquiver} gives the precise statement.  Denote by $X_K$ the indecomposable representation of $\rep(Q)$ corresponding to the interval $K$.
	
	For any interval $K$, write $b(K)$ as the lower bound and $e(K)$ as the upper bound of $K$. Two intervals $K$ and $L$ are \new{adjacent} if either $b(K) = e(L)+1$ or $b(L) = e(K)+1$. We have the following result inspired by a previous work \cite{D22} and by \cref{A2ex}. 
	\begin{prop}\label{nec cond CJR1} Let $\mathscr{C}$ be a subcategory of $\rep(Q)$. Write $\mathscr{J}$ for the interval set corresponding to the indecomposable representations that additively generate $\mathscr{C}$. If two intervals exist $K,L \in \mathscr{J}$ such that $K$ and $L$ are adjacent, then $\mathscr{C}$ is not canonically Jordan recoverable.  
	\end{prop}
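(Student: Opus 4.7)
The plan is to exhibit a single $X \in \mathscr{C}$ that witnesses the failure of canonical Jordan recoverability, generalising the $S_1 \oplus S_2$ versus $P_1$ situation treated in \cref{A2ex}. Since $K, L \in \mathscr{J}$ and $\mathscr{C}$ is closed under direct sums, the representation $X := X_K \oplus X_L$ lies in $\mathscr{C}$. The adjacency hypothesis says $K$ and $L$ are disjoint and $M := K \cup L$ is itself an interval of $\{1, \ldots, n\}$ strictly containing both $K$ and $L$. The indecomposable $X_M$ will play the role of the ``generic competitor'' to $X$ inside $\rep(Q, \GenJF(X))$.

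First I would compute $\GenJF(X)$. Because every indecomposable interval representation has a one-dimensional space at each vertex of its support and zero elsewhere, and $K \cap L = \emptyset$, the representation $X$ satisfies $\dim X_q \leqslant 1$ at every vertex $q$. Any $N \in \NEnd(X)$ therefore restricts at each vertex to a nilpotent endomorphism of an at most one-dimensional space, which must be zero. So $\NEnd(X) = \{0\}$ and $\GenJF(X) = \pmb{\lambda}$, where $\lambda^q = (1)$ for $q \in M$ and $\lambda^q$ is the empty partition otherwise. The same observation shows that any representation $Y$ of dimension vector $\vdim(X)$ already admits $N = 0$ as a nilpotent endomorphism with Jordan form $\pmb{\lambda}$; conversely every $Y \in \rep(Q, \pmb{\lambda})$ has dimension vector $\vdim(X)$. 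Hence $\rep(Q, \pmb{\lambda})$ coincides with the full representation variety of dimension vector $\vdim(X)$.

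After fixing a basis at each vertex of $M$, this variety is the affine space with one scalar coordinate per arrow of $Q$ whose source and target both lie in $M$. The base-change orbit where every such scalar is nonzero is a nonempty Zariski-open subset consisting entirely of representations isomorphic to $X_M$. Since $X_M$ is indecomposable while $X = X_K \oplus X_L$ has two indecomposable summands, $X_M$ and $X$ are not isomorphic. Consequently every Zariski-dense open subset $\Omega \subseteq \rep(Q, \GenJF(X))$ meets this open orbit and therefore contains representations not isomorphic to $X$; no such $\Omega$ can consist entirely of representations isomorphic to $X$. This defeats canonical Jordan recoverability for the specific object $X \in \mathscr{C}$, so $\mathscr{C}$ is not canonically Jordan recoverable. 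I do not expect a serious obstacle here: the only step requiring care is the explicit identification of $\rep(Q, \pmb{\lambda})$ with the full representation variety of $\vdim(X)$, which is immediate once every nontrivial entry of $\pmb{\lambda}$ equals the partition $(1)$.
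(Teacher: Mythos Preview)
Your argument is correct and follows essentially the same route as the paper: take $X = X_K \oplus X_L$, observe that $\dim X_q \leqslant 1$ forces $\GenJF(X) = \pmb{\lambda}$ with $\lambda^q = (1)$ exactly on $M = K \cup L$, identify $\rep(Q,\pmb{\lambda})$ with the full representation variety of this dimension vector, and note that the dense open orbit there is the isomorphism class of the indecomposable $X_M \ncong X$. The paper packages the $\GenJF$ computation as a separate lemma and writes down the open set $\Omega = \{Z \mid Z_\alpha \neq 0 \text{ for all } \alpha \text{ supported in } M\}$ explicitly, but the content is the same.
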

	Let us first prove this lemma, which will be helpful.
	\begin{lemma}\label{lem:GenJFdisjunion}
		Fix an $A_n$ type quiver $Q$. Let $K_1, \ldots, K_p$ be $p \in \mathbb{N}^*$ disjoint intervals. Write $J = K_1 \cup \ldots \cup K_p$. Then $\GenJF(X_{K_1} \oplus  \ldots \oplus X_{K_p}) = (\lambda^q)_{q \in Q_0}$ with:
		\begin{enumerate}[label = $\bullet$]
			\item $\lambda^q = (1)$ for $q \in J$;
			\item $\lambda^q = (0)$ otherwise.
		\end{enumerate}
	\end{lemma}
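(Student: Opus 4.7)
The plan is to reduce the statement to the trivial observation that the only nilpotent endomorphism of a vector space of dimension at most one is the zero map. If I can show that the disjointness of $K_1, \ldots, K_p$ forces $\dim X_q \leqslant 1$ at every vertex $q$, where $X := X_{K_1} \oplus \cdots \oplus X_{K_p}$, then $\JF(N)$ will in fact be constant on \emph{all} of $\NEnd(X)$, not merely on a dense open subset, and the claim follows without appeal to the genericity theorem from \cite{GPT19}.

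First I would compute the dimension vector of $X$. Using the description of the indecomposables recalled in Section~\ref{ss:IntroAdjavoid}, namely $\dim(X_{K_i})_q = 1$ if $q \in K_i$ and $0$ otherwise, additivity of dimension on direct sums gives $\dim X_q = \#\{ i : q \in K_i \}$. Pairwise disjointness of the $K_i$ then yields $\dim X_q = 1$ for $q \in J$ and $\dim X_q = 0$ otherwise.

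Next, for an arbitrary $N \in \NEnd(X)$, the induced component $N_q \colon X_q \to X_q$ is a nilpotent endomorphism of a space of dimension at most one, hence is zero. Reading off its Jordan form gives the partition $(1)$ precisely when $q \in J$ (a single $1 \times 1$ zero Jordan block) and the empty partition $(0)$ otherwise. Since this conclusion holds for every $N \in \NEnd(X)$, it holds on the dense open locus where $\JF$ stabilizes, so $\GenJF(X)$ takes the value announced in the statement.

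There is essentially no obstacle: the lemma simply turns the abstract genericity result into a triviality in the case where every vertex sees at most one summand. The only point requiring mild care is the convention that $\lambda^q = (0)$ stands for the empty partition indexing a zero-dimensional space, which I would flag briefly before concluding.
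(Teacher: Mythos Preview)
Your proof is correct and follows exactly the same approach as the paper: the paper's entire argument is the one-line observation that $\dim X_q \leqslant 1$ forces $N = 0$, from which the result follows. You have simply spelled out the details (the dimension count via disjointness and the reading of the Jordan form) that the paper leaves implicit.
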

	\begin{proof}[Proof] Since, for $q \in Q_0$, $\dim(X_q) \leqslant 1$, we must have $N = 0$. The result follows.
	\end{proof}
	\begin{proof}[Proof of \cref{nec cond CJR1}] Let $K,L \in \mathscr{J}$ be two adjacent intervals. Write $X = X_K \oplus X_L$ and $J = K \cup L$. Note that $J$ is an interval. We get $\GenJF(X) = (\lambda^q)_{q \in Q_0}$ as defined in the previous lemma.
		
		First, note that the only nilpotent endomorphism $N$ such that $\JF(N) = \pmb{\lambda}$ is the zero morphism. Therefore, choosing $Y \in \rep(Q,\pmb{\lambda})$ is equivalent to taking a representation $Y$ such that $Y_q \cong \mathbb{K}$ if $q \in J$, and $Y_q = 0$ otherwise, without any other restrictions. We get that $$\Omega = \{ Z \in \rep(Q,\pmb{\lambda}) \mid Z_{\alpha} \neq 0, \forall \alpha \in Q_1, \{s(\alpha), t(\alpha)\} \subset J\}$$ is a dense open set of $\rep(Q, \pmb{\lambda})$. Following this last statement, and by observing that $Z \cong X_J \ncong X$ for all $Z \in \Omega$, we conclude that $\mathscr{C}$ is not canonically Jordan recoverable.
	\end{proof} 
	This result highlights the necessary condition to avoid the existence of two adjacent intervals among the set of intervals corresponding to indecomposable representations that generate $\mathscr{C}$. We define an \new{adjacency-avoiding} interval set as an interval set with no pair of adjacent intervals.
	
	We aim to prove that the adjacency-avoiding property also gives a sufficient combinatorial criterion to detect canonical Jordan recoverability.
	\begin{theorem} \label{maintheorem}
		Let $Q$ be an $A_n$ type quiver, and $\mathscr{C}$ be a subcategory of $\rep_\mathbb{K}(Q)$. Write $\mathscr{J}$ for the interval set corresponding to the indecomposable representations that additively generate $\mathscr{C}$. Then $\mathscr{C}$ is canonically Jordan recoverable if and only if $\mathscr{J}$ is adjacency-avoiding. 
	\end{theorem}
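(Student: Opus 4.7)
The forward implication is Proposition~\ref{nec cond CJR1}. For the converse, suppose $\mathscr{J}$ is adjacency-avoiding and fix an arbitrary $X = \bigoplus_{K \in \mathscr{J}} X_K^{m_K} \in \mathscr{C}$. Setting $\pmb{\lambda} := \GenJF(X)$, the plan is to produce a dense open subset $\Omega \subseteq \rep(Q,\pmb{\lambda})$ such that $Y \cong X$ for every $Y \in \Omega$. I would proceed in three stages: first, make $\pmb{\lambda}$ combinatorially explicit in terms of $\mathscr{J}$; second, describe an inversion procedure $\pmb{\lambda} \mapsto X'$ and verify that $X' \cong X$; finally, prove that the locus $\{Y \in \rep(Q,\pmb{\lambda}) : Y \cong X'\}$ is dense and open in $\rep(Q,\pmb{\lambda})$.

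For the first stage, I would compute $\pmb{\lambda}$ by induction on $\sum m_K$, using the description of type-$A$ morphism spaces recalled in \cref{s:TypeAquiver}. Adding an interval $K$ to an already computed adjacency-avoiding family either concatenates a singleton part at each vertex of $K$ (when $K$ is disjoint from the supports already present, invoking \cref{lem:GenJFdisjunion} on the relevant disjoint pieces) or extends an existing Jordan block through the overlap with some nested or crossing interval. The adjacency-avoiding hypothesis ensures that exactly one of these alternatives applies unambiguously at every vertex.

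For the second stage, I would define a greedy reading algorithm on $\pmb{\lambda}$: starting from the largest available part of some $\lambda^q$, extend it as far as possible along the vertices of $Q$ subject to the supply of matching parts in neighboring partitions, then remove the used cells and iterate. The output is a multiset of intervals, which reassemble into a representation $X'$. The central claim is that, under the adjacency-avoiding assumption, this procedure applied to $\pmb{\lambda}$ from stage one returns $\mathscr{J}$ with the correct multiplicities, and in particular $X' \cong X$. The point is that a forced termination of a chain between $q$ and $q+1$ in the greedy step corresponds exactly to an endpoint of an interval of $\mathscr{J}$, never to an adjacency that could be bridged.

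The main obstacle is the third stage, where genericity must be verified. I would compare dimensions: the orbit of $X$ under the natural base-change action on representations of dimension vector $\vdim X$ has a computable codimension, and studying $\rep(Q,\pmb{\lambda})$ via the filtrations on each $Y_q$ induced by a nilpotent endomorphism of Jordan type $\pmb{\lambda}$, I would show that the $X$-orbit meets the generic point of $\rep(Q,\pmb{\lambda})$. Openness follows from upper semi-continuity of hom-dimensions. The adjacency-avoiding condition is critical here: it excludes the ``collapse'' phenomenon of \cref{A2ex}, in which two adjacent intervals fuse into a single longer indecomposable with the same generic Jordan data, which would otherwise provide a larger orbit dominating $\rep(Q,\pmb{\lambda})$ and prevent $X$ from being recovered.
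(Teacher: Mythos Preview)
Your proposal is a plan, not a proof, and the plan diverges sharply from the paper's argument while leaving the essential difficulty unaddressed.

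Stages~1 and~2 are aimed at Jordan recoverability, not canonical Jordan recoverability. Even granting that your greedy reading returns $X$ from $\pmb{\lambda}$, this only shows that $\GenJF$ is injective on $\mathscr{C}$; it says nothing about whether $\GenRep(\pmb{\lambda}) \cong X$. You correctly flag Stage~3 as the main obstacle, but ``compare dimensions'' is not an argument: you would need the dimension of $\rep(Q,\pmb{\lambda})$, which is not a standard representation variety but a constructible subset cut out by the existence of a nilpotent endomorphism of prescribed Jordan type, and you would need to know that the orbit of $X$ is the unique one of maximal dimension inside it. Neither is available from what you have written. Stage~1 is also too optimistic: the Greene--Kleitman description of $\GenJF$ (\cref{thm:GenJFexist}) involves maxima over $\ell$-tuples of paths in the Auslander--Reiten quiver, and the effect of adding one interval is not a simple dichotomy between ``new singleton part'' and ``extend one block''.

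The paper takes an entirely different route. It first classifies the maximal adjacency-avoiding interval sets as $\mathscr{J}(\B,\E)$ for complete shifted bipartitions $(\B,\E)$ (\cref{themaxavoid}), so it suffices to treat these. It then builds $\Cat_Q(\mathscr{J}(\B,\E))$ by an explicit algorithm (\cref{algo:catCJR}) that starts from the zero category and repeatedly applies reflection functors $\mathcal{R}_v^{\pm}$ and the operation $\Adds_v$ of adjoining a simple. The technical core is \cref{s:store} and \cref{s:operationsCJR}: one introduces \emph{storability} conditions on the triple of partitions $(\lambda^{v-1},\lambda^v,\lambda^{v+1})$ and proves that, under the appropriate storability hypothesis at $v$, both $\mathcal{R}_v^{\pm}$ and $\Adds_v$ preserve canonical Jordan recoverability (\cref{mutationCJR}, \cref{addsimple2}), with the effect on $\GenJF$ and $\GenRep$ controlled by the diagonal transformation of \cref{def:diag} via results of Garver--Patrias--Thomas (\cref{thm:reflectionGenJF}, \cref{thm:reflectionGenRep}). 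The induction then simultaneously tracks canonical Jordan recoverability and the storability pattern of $\GenJF(X)$ through the algorithm, first for $\overrightarrow{A_n}$ (\cref{thm:CJRlinear}) and then for arbitrary orientation by further reflections (\cref{thm:CJR}). Nothing resembling a direct dimension count or a greedy inversion appears.
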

	This theorem completely characterizes the canonically Jordan recoverable subcategories of $\rep(Q)$ and specializes to give a previous result of Garver, Patrias, and Thomas for $A_n$ type quivers.
	\begin{cor}[\cite{GPT19}] Let $Q$ be an $A_n$ type quiver. For any $m \in Q_0$, the category $\mathscr{C}_{Q,m}$ generated by the indecomposable representations $X_K$ for $K$ intervals containing $m$ is canonically Jordan recoverable. 
	\end{cor}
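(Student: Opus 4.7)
The plan is to obtain this corollary as an immediate consequence of Theorem~\ref{maintheorem}. Write $\mathscr{J}_m := \{K \text{ interval} : m \in K\}$ for the interval set corresponding to the indecomposable representations that additively generate $\mathscr{C}_{Q,m}$; by the main theorem, it then suffices to verify that $\mathscr{J}_m$ is adjacency-avoiding.

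The key observation I would invoke is that any two adjacent intervals are disjoint. Indeed, the adjacency condition $b(K) = e(L)+1$ (or its symmetric version) forces one interval to terminate exactly where the other begins, with no shared vertex; this is precisely the situation in which Lemma~\ref{lem:GenJFdisjunion} is applied to $X_K \oplus X_L$ in the proof of Proposition~\ref{nec cond CJR1}, where the disjointness hypothesis of the lemma is used implicitly.

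With this fact in hand the verification is immediate. Given any $K, L \in \mathscr{J}_m$, by construction $m \in K$ and $m \in L$, so $m \in K \cap L$ and in particular $K \cap L \neq \emptyset$. Therefore $K$ and $L$ cannot be adjacent, which shows $\mathscr{J}_m$ is adjacency-avoiding. Invoking Theorem~\ref{maintheorem} closes the argument.

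I do not anticipate any substantive obstacle: Theorem~\ref{maintheorem} carries all the weight, and the only combinatorial point required — that two intervals sharing a common vertex cannot be adjacent — follows straight from the definition of adjacency. The one subtlety worth flagging in a clean write-up is to make explicit that adjacency forces disjointness, since this is what turns the common vertex $m$ into an obstruction.
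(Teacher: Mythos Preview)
Your proposal is correct and follows essentially the same approach as the paper's own proof: both observe that any two intervals containing $m$ have nonempty intersection (namely $\{m\}$), hence cannot be adjacent since adjacent intervals are disjoint, and then invoke Theorem~\ref{maintheorem}. The paper's write-up is simply more terse.
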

	\begin{proof}[Proof] Let $K$ and $L$ be two intervals corresponding to two indecomposable representations of $\mathscr{C}_{Q,m}$. By definition, $K \cap L \supseteq \{ m\}$. Therefore, $K$ and $L$ are not adjacent, as two adjacent intervals must have an empty intersection. We conclude the desired result by applying \cref{maintheorem}.
	\end{proof}
	To prove \cref{maintheorem}, we first describe the maximal adjacency-avoiding interval sets (for inclusion). After that, following a revised version of the work of \cite{GPT19}, we give a recursive construction of the subcategories generated by indecomposable representations provided by these interval sets. 
	Then, we prove the main result for the linearly oriented case by showing that operations applied during the construction of those subcategories preserve the canonical Jordan recoverability. We conclude the result in the general case by reducing it to the linearly oriented case.
	
	\subsection{A combinatorial motivation} 
	\label{ss:combmotiv} In \cite[section 6]{GPT19}, Garver, Patrias, and Thomas made some links with the Robinson--Schensted--Knuth (RSK) correspondence. Thanks to Gansner's combinatorics \cite{Ga81Ma,Ga81Hi}, they prove that if $Q$ is the $A_n$ type quiver where only the vertex $m$ is a sink, applying $\GenJF$ on $\mathscr{C}_{Q,m}$ coincides with applying the RSK on an integer matrix recording the multiplicities of the indecomposables in $\mathscr{C}_{Q,m}$.
	
	\cref{maintheorem} can be applied to define an extended RSK correspondence. This extended RSK recovers both the scrambled RSK of \cite{GPT19,Dauv20} and Gansner's version of RSK, which applies to fillings of any partition shape, while being more general than either. The reader can find more details in \cite{Deq24}. A FPSAC extended abstract \cite{DFPSAC24} is also available as a shorter version.
	
	\subsection{Outline of the paper}
	We start, in \cref{s:TypeAquiver}, with some fundamental reminders on type $A$ quiver representations. 
	
	Then, in \cref{s:JRandNJR}, in the context of background work on Jordan recoverability, we highlight a combinatorial computation of the generic Jordan form data via the Greene--Kleitman invariant.
	
	Afterward, in \cref{s:store}, we introduce the notion of storability for tuples of integer partitions, which extracts the general behavior of the generic Jordan form data of any representation in a candidate category. 
	
	Following that, in \cref{s:operationsCJR}, we show the behavior of the generic Jordan form data under elementary operations, such as adding copies of a well-chosen simple module and applying reflection functors.
	
	In \cref{s:adj}, we study the combinatorial behavior of the sets of adjacency-avoiding interval sets, by first describing all of them, and then exhibiting their reaction under the action mimicking the reflection functors in the algebraic world.
	
	Finally, we prove the main result in \cref{s:proof}. First, we use algebraic and combinatorial algorithms, based on the previous elementary operations, to construct categories arising from the maximal adjacency-avoiding interval sets in the linear case. Then, we get to the conclusion by induction using the behavior of those subcategories, as shown combinatorially.
	
	In \cref{ss:tgf}, we discuss some future directions to explore following this work.

	\section{Some generalities about \texorpdfstring{$A_n$}{An} type quiver representations} 
	\label{s:TypeAquiver}
	
	In this section, we give an overview of $A$ type quiver representations. For more details, we refer the reader to the following references: \cite{ASS06,BGP73,S14}.
	
	\subsection{\texorpdfstring{$A_n$}{An} type quivers}
	\label{ss:DefAn}
	
	A \new{quiver} $Q$ is a $4$-tuple $(Q_0,Q_1,s,t)$ where $Q_0$ is \new{the set of vertices}, $Q_1$ is \new{the set of arrows} and $s,t : Q_1 \longrightarrow Q_0$ are respectively \new{source} and \new{target functions}. The \new{opposite quiver} of $Q$, denoted $Q^{\op}$, is the quiver obtained from $Q$ by reversing the direction of all the arrows of $Q$.  We say that $Q$ is a \new{finite} quiver whenever $Q_0$ and $Q_1$ are finite sets. The \new{underlying graph} of a finite quiver $Q$ is a pair $\mathcal{G}(Q) = (\mathcal{G}_0,\mathcal{G}_1)$ where $\mathcal{G}_0 = Q_0$ is the set of vertices of the graph and $\mathcal{G}_1 = \{\{s(\alpha), t(\alpha)\} \mid \alpha \in Q_1\}$ is the (multi)set of edges of the graph. Note that a finite quiver $Q$ can be seen as the graph $\mathcal{G}(Q)$ endowed with an orientation for each edge. 
	
	Let $Q$ be a finite quiver, and $n > 0$ be an integer. Assume that $Q_0 = \{1, \ldots, n\}$. The quiver $Q$ is said to be \new{of $A_n$ type} whenever $\mathcal{G}(Q)$ is of the following shape. \begin{center}
		\begin{tikzpicture}[-,line width=0.5mm,>= angle 60,color=black,scale=0.8]
			\node (1) at (0,0){$1$};
			\node (2) at (2,0){$2$};
			\node (3) at (4,0){$\cdots$};
			\node (4) at (6,0){$n$};
			\draw (1) -- (2) -- (3) -- (4);
		\end{tikzpicture}
	\end{center}  We denote $\overrightarrow{A_n}$ the $A_n$ type quiver where all the arrows of $Q$ are exactly $i \longrightarrow i+1$ for $i \in \{1, \ldots, n-1\}$. An $A_n$ type quiver $Q$ is called \new{linearly oriented} if either $Q = \overrightarrow{A_n}$ or $Q = \overrightarrow{A_n}^{\op}$.  
	
	\subsection{Representations}
	\label{ss:RepAn}
	
	Let $\mathbb{K}$ be an algebraically closed field. This assumption is a restriction that we need to use the results of \cite{GPT19}. They need it because some of their arguments rely on algebraic geometry.
	
	A \new{representation of $Q$ (over $\mathbb{K}$) }is a pair $X = ((X_q)_{q \in Q_0}, (X_\alpha)_{\alpha \in Q_1})$ where:
	\begin{enumerate}[label = $\bullet$]
		\item for each $q \in Q_0$, $X_q$ is a  $\mathbb{K}$-vector space;
		
		\item for each $\alpha \in Q_1$, $X_\alpha : X_{s(\alpha)} \longrightarrow X_{t(\alpha)}$ is a $\mathbb{K}$-linear map.
	\end{enumerate}
	We say that such a representation $X$ is \new{finite dimensional} if $\dim X_q < \infty$ for all $q \in Q_0$. We denote $\vdim(X) = (\dim X_q)_{q \in Q_0}$ the \new{dimension vector} of $X$. From now on, when we talk about representations of a quiver, we mean finite-dimensional representations.
	
	Let $X$ and $Y$ be two representations of $Q$. A \new{morphism} $\phi$ from $X$ to $Y$ is a collection of linear maps $(\phi_q)_{q \in Q_0}$ such that for any $\alpha \in Q_1$, we have $\phi_{t(\alpha)}X_\alpha = Y_\alpha \phi_{s(\alpha)}$. Write $X \cong Y$ whenever $X$ and $Y$ are isomorphic. Denote $\Hom(X,Y)$ the \new{homomorphism space} from $X$ to $Y$ and $\End(X) = \Hom(X,X)$ the \new{endomorphism space} of $X$.
	
	Recall that the representations of $Q$ endowed with the morphisms between them form a category functorially equivalent to the category of (finite-dimensional) left modules over the path algebra associated to $Q$. Denote $\rep(Q)$ \new{the category of finite-dimensional representations of $Q$}. Remember that $\rep(Q)$ depends on the field $\mathbb{K}$, but for simplicity we suppress it from the notation.
	
	A representation $M \neq 0$ is \new{indecomposable} if either $X = 0$ or $Y = 0$, whenever $M \cong X \oplus Y$. We write $\Ind(Q)$ for the indecomposable representations in $\rep(Q)$ up to isomorphism.
	
	Any $M \in \rep(Q)$ admits a unique decomposition into a finite sum of indecomposable representations up to isomorphism. Given $X \in \Ind(Q)$, denote $\mult(X, M)$ the \new{multiplicity of $X$ in $M$} defined as the number of the indecomposable representations isomorphic to $X$ appearing in the decomposition of $M$. 
	
	We now recall a complete description of the indecomposable representations of an $A_n$ type quiver and the morphisms between them. We will state these results in terms of intervals of $\{1, \ldots, n\}$.
	
	Let $n$ be a positive integer. The \new{intervals} of $\{1, \ldots, n\}$ are the sets $\llrr{i,j} := \{i, i+1, \ldots,j \}$ given by all $1 \leqslant i \leqslant j \leqslant n$. If $i=j$, we write $\llrr{i,i} = \llrr{i}$. Denote $\mathcal{I}_n$ the set of intervals in $\{1, \ldots n\}$. For $K = \llrr{i,j} \in \mathcal{I}_n$, write $b(K) = i$ and $e(K) = j$. Call \new{interval set} any subset of $\mathcal{I}_n$.
	\begin{definition}\label{def:intervalbotandtop}  Let $Q$ be an $A_n$ type quiver. Consider $K,L \in \mathcal{I}_n$ such that $K \subseteq L$. We say that:
		\begin{enumerate}[label = $\bullet$]
			\item $K$ is \new{above} $L$ (\new{relative to} $Q$) if the two following assertions are satisfied:\begin{enumerate}[label = $\bullet$]
				\item $b(K) = b(L)$ or we have the arrow $b(K)-1 \longleftarrow b(K)$ in $Q$;
				\item $e(K) = e(L)$ or we have the arrow $e(K) \longrightarrow e(K)+1$ in $Q$.
			\end{enumerate}
			\item $K$ is \new{below} $L$ (\new{relative to} $Q$) if the two following assertions are satisfied:\begin{enumerate}[label = $\bullet$]
				\item $b(K) = b(L)$ or we have the arrow $b(K)-1 \longrightarrow b(K)$ in $Q$;
				\item $e(K) =e(L)$ or we have the arrow $e(K) \longleftarrow e(K)+1$ in $Q$.
			\end{enumerate}
		\end{enumerate}
	\end{definition}
	\begin{ex} Consider the following quiver.
		\begin{center}
			\begin{tikzpicture}[->,line width=0.5mm,>= angle 60,color=black,scale=0.8]
				\node (Q) at (-1,0){$\overrightarrow{A}_3 =$};
				\node (1) at (0,0){$1$};
				\node (2) at (2,0){$2$};
				\node (3) at (4,0){$3$};
				\draw (1) -- (2);
				\draw (2) -- (3);
			\end{tikzpicture}
		\end{center}
		Then $\llrr{2}$ is above $\llrr{2,3}$ and below $\llrr{1,2}$.
	\end{ex}
	Note that any interval is above and below itself, relative to all $A_n$ type quivers.
	Let $Q$ be a quiver of $A_n$ type. To any interval $K \in  \mathcal{I}_n$, we consider $X_K$ the representation of $Q$ defined as it follows:
	\begin{enumerate}[label = $\bullet$]
		\item $(X_K)_q = \mathbb{K}$ if $q \in K$, $(X_K)_q = 0$ otherwise;
		\item $(X_K)_{\alpha} = \Id_\mathbb{K}$ if $\alpha$ is such that $\{s(\alpha), t(\alpha)\} \subseteq K$, $(X_K)_{\alpha} = 0$ otherwise;
	\end{enumerate}
	Note that $X_K$ is an indecomposable representation of $Q$ for all $K \in \mathcal{I}_n$.
	\begin{theorem}[\cite{G72,R84}]\label{thm:indecandmorphtypeA} Let $Q$ be an $A_n$ type quiver. \begin{enumerate}[label = $(\alph*)$]
			\item \label{indectypeA} The isomorphism classes of indecomposable representations of $Q$ are in bijection with $\mathcal{I}_n$; more precisely, they are described by indecomposable representations $X_K$ for $K \in \mathcal{I}_n$;
			\item \label{morphtypeA} The homomorphism space between two indecomposable representations of $Q$ is of dimension at most one; more precisely, $\Hom(X_K, X_L)$ is nonzero if and only if  there exists an interval $J$ such that $J$ is above $K$ and below $L$ relative to $Q$; if such an interval exists, it is unique and 
			$\Hom(X_K,X_L)$ consists of scalar multiples of the morphism $\phi = (\phi_q)_{q \in Q_0}$ such that $\phi_q = \Id_\mathbb{K}$ if $q \in J$ and $\phi_q = 0$ otherwise.
		\end{enumerate}
	\end{theorem}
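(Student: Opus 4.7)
The plan is to treat parts (a) and (b) separately, appealing to the classical theory of quiver representations for the former and computing directly for the latter.

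For (a), first I would verify by direct computation that each $X_K$ is indecomposable: any endomorphism $\phi$ of $X_K$ must satisfy $\phi_i = \phi_{i+1}$ along each arrow with both endpoints in $K$, so $\phi$ is determined by a single scalar and $\End(X_K) \cong \mathbb{K}$, whence $X_K$ has no nontrivial idempotents. The $X_K$ are pairwise non-isomorphic since their dimension vectors differ. The converse --- that every indecomposable is isomorphic to some $X_K$ --- is Gabriel's theorem applied to the Dynkin type $A_n$: the positive roots of the $A_n$ root system are exactly the $\binom{n+1}{2}$ indicator vectors of intervals, which matches $|\mathcal{I}_n|$. A more self-contained alternative is to induct on $\sum_q \dim M_q$: the disconnected-support case reduces to a direct sum, and the connected-support case can be handled either by changing bases so every arrow map becomes an identity on a common subspace, or by using BGP reflection functors to reduce to the linearly oriented case $\overrightarrow{A_n}$.

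For (b), I would compute $\Hom(X_K, X_L)$ by unpacking the definition. A morphism $\phi = (\phi_q)_{q \in Q_0}$ satisfies $\phi_q = 0$ whenever $q \notin K \cap L$, so the hom space is zero when $K \cap L = \varnothing$. Otherwise $K \cap L$ is an interval $\llrr{\alpha, \beta}$, and the commutation relation at each arrow internal to $K \cap L$ forces $\phi_q = c \cdot \Id_\mathbb{K}$ for a single scalar $c \in \mathbb{K}$ throughout. This already yields $\dim \Hom(X_K, X_L) \leq 1$ and shows that the interval $J$ of the statement, if it exists, must coincide with $K \cap L$, whence its uniqueness.

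It remains to determine when $c$ can be nonzero. The only further constraints come from the arrows adjacent to $K \cap L$, at positions $\alpha - 1$ (if $\alpha > 1$) and $\beta$ (if $\beta < n$). A case analysis on each boundary arrow --- splitting by its orientation in $Q$ and by whether the adjacent outside vertex lies in $K$, in $L$, or in neither --- shows that the commutation relation forces $c = 0$ except in precisely the configurations encoded by \cref{def:intervalbotandtop}, namely when $J = K \cap L$ is above $K$ and below $L$ relative to $Q$. The main obstacle is merely staying organized during this case analysis, since the condition at each endpoint splits according to which of $b(K), b(L)$ (resp.\ $e(K), e(L)$) the endpoint of $J$ agrees with. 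Once this is handled, the explicit description of $\phi$ as a scalar multiple of the identity on $J$ is immediate.
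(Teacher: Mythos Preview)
The paper states \cref{thm:indecandmorphtypeA} as background without proof; it is quoted as a classical fact about $A_n$ representations and the text simply continues ``In the light of the previous result''. So there is no proof in the paper to compare against.

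Your proposal is correct and follows the standard line. Part (a) via $\End(X_K)\cong\mathbb{K}$ and Gabriel's theorem is exactly what one expects. In part (b) your claim that any candidate $J$ must equal $K\cap L$ is right, though you might make the argument explicit: if $b(J) > \max(b(K),b(L))$ then the above-$K$ condition forces the arrow $b(J)-1\longleftarrow b(J)$ while the below-$L$ condition forces $b(J)-1\longrightarrow b(J)$, a contradiction; together with $J\subseteq K\cap L$ this pins down $b(J)=b(K\cap L)$, and symmetrically for $e(J)$. The boundary case analysis you describe is then routine.
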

	In the light of the previous result : \begin{enumerate}[label = $\bullet$]
		\item for all interval sets $\mathscr{J} \subseteq \mathcal{I}_n$ and all quivers $Q$ of $A_n$ type, write $\Cat_Q(\mathscr{J})$ the subcategory of $\rep(Q)$ additively generated by $X_K$ for $K \in \mathscr{J}$;
		\item for all quivers $Q$ of $A_n$ type and for all nonzero subcategories $\mathscr{C}$ of $\rep(Q)$, let $\Int(\mathscr{C})$ be the interval set of $\mathcal{I}_n$ consisting of intervals $K$ such that $X_K \in \mathscr{C}$.
	\end{enumerate}
	Recall that we only consider full subcategories closed under direct sums and direct summands. Such subcategories are additively generated by $X_K$ for $K \in \mathscr{J}$ for some  $\mathscr{J} \subset \mathcal{I}_n$.
	
	Hence, for any $A_n$ type quiver $Q$, for all $\mathscr{J} \subseteq \mathcal{I}_n$ and for all subcategories $\mathscr{C} \subseteq \rep(Q)$, we have $\mathscr{J} = \Int(\Cat_Q(\mathscr{J})) \text{ and } \mathscr{C} = \Cat_Q(\Int(\mathscr{C})).$

	\subsection{Reflection functors}
	\label{ss:defreflections}
	
	In this subsection, we will recall the definition of \emph{reflection functors} for any quiver $Q$. For our purposes in this paper, defining those functors only on objects is sufficient.
	
	Let $Q$ be an arbitrary quiver and $v$ be a vertex of $Q$. Denote $\sigma_v(Q)$ the quiver obtained from $Q$ by reversing the directions of the arrows incident to $v$. If $\alpha \in Q_1$ such that $v \in \{s(\alpha),t(\alpha)\}$, denote $\tilde{\alpha}$ the reversed arrow of $\alpha$ in $\sigma_v(Q)$. 
	
	Now assume that $v$ is a sink of $Q$. Consider $\Xi = \sigma_v(Q)$. The \new{reflection functor} $$\mathcal{R}_v^+ : \rep(Q) \longrightarrow \rep(\Xi)$$ is defined as follows. Let $X = ((X_q)_{q \in Q_0}, (X_\beta)_{\beta \in Q_1}) \in \rep(Q)$. We set $\mathcal{R}_v^+(X) = ((Y_q)_{q \in \Xi_0}, (Y_\beta)_{\beta \in \Xi_1}) \in \rep(\Xi)$ where \begin{enumerate}[label = $\bullet$]
		\item $Y_q = X_q$ for $q \neq v$ and $$\displaystyle Y_v = \Ker \left(\bigoplus_{\alpha \in Q_1,\ t(\alpha) = v} X_\alpha : \bigoplus _{\alpha \in Q_1,\ t(\alpha) = v} X_{s(\alpha)} \longrightarrow X_v \right);$$
		\item $Y_\beta= X_\beta$ if $\beta \in Q_1$ such that $t(\beta) \neq v$, otherwise $Y_{\tilde{\beta}} : Y_v \longrightarrow X_{s(\beta)}$ is the composition of the kernel inclusion of $Y_v$ to $\displaystyle \bigoplus_{\alpha \in Q_1,\ t(\alpha) = v} X_{s(\alpha)}$ with the projection onto the direct summand $X_{s(\beta)}$.
	\end{enumerate}
	If $v$ is a source of $Q$, the \new{reflection functor} $$\mathcal{R}_v^- : \rep(Q) \longrightarrow \rep(\sigma_v(Q))$$ is defined dually.
	\begin{ex} \label{ex:refl}
		Let $Q$ be a quiver and $X\in \rep(Q)$ as below.
		\begin{center}
			\begin{tikzpicture}[->,line width=0.6mm,>= angle 60,color=black,scale=1.3]
				\node (Q) at (-.7,0){$Q=$};
				\node (1) at (0,0){$1$};
				\node (2) at (2,0){$2$};
				\node (3) at (4,0){$3$};
				\node (4) at (6,0){$4$};
				\draw (1) -- node[above]{$\alpha$} (2);
				\draw (3) -- node[above]{$\beta$} (2);
				\draw (4) -- node[above]{$\gamma$}(3);
				\begin{scope}[yshift=-2.5cm]
					\node (Q) at (-0.7,0.5){$X=$};
					\node (1) at (0,0){$\mathbb{K}^3$};
					\node (2) at (2,0){$\mathbb{K}^4$};
					\node (3) at (4,0){$\mathbb{K}^3$};
					\node (4) at (6,0){$\mathbb{K}^2$};
					\draw (1) -- node[above]{\small $\left[\begin{matrix}
							1 & 0 & 0 \\
							0 & 1 & 0\\
							0 & 0 & 0 \\
							0 & 0 & 0
						\end{matrix} \right]$}(2);
					\draw (3) -- node[above]{\small$\left[\begin{matrix}
							1 & 0 & 0 \\
							0 & 0 & 0\\
							0 & 1 & 0 \\
							0 & 0 & 0
						\end{matrix}\right]$}(2);
					\draw (4) -- node[above]{\small$\left[\begin{matrix}
							0 & 0 \\
							0 & 0 \\
							1 & 0 
						\end{matrix} \right]$} (3);
				\end{scope}
			\end{tikzpicture}
		\end{center}
		Apply the reflection functor $\mathcal{R}_2^+$. The arrows $\alpha$ and $\beta$ are the only ones with $2$ as a target. First we get $\Xi = \sigma_2(Q)$ as follows.
		\begin{center}
			\begin{tikzpicture}[->,line width=0.6mm,>= angle 60,color=black,scale=1.3]
				\node (Q) at (-.7,0){$\Xi=$};
				\node (1) at (0,0){$1$};
				\node (2) at (2,0){$2$};
				\node (3) at (4,0){$3$};
				\node (4) at (6,0){$4$};
				\draw (2) -- node[above]{$\tilde{\alpha}$} (1);
				\draw (2) -- node[above]{$\tilde{\beta}$} (3);
				\draw (4) -- node[above]{$\gamma$}(3);
			\end{tikzpicture}
		\end{center}
		We calculate afterward $$\Ker \left(  X_\alpha \oplus X_\beta \right) = \Ker \left( \left[ \begin{matrix}
			1 & 0 & 0 & 1 & 0 & 0\\
			0 & 1 & 0 & 0 &0 &0 \\
			0 & 0 & 0 & 0 & 1 & 0   \\
			0 & 0 & 0 & 0 &0 &0
		\end{matrix} \right] \right) = \left\langle \left( \begin{matrix}
			1 \\
			0 \\
			0 \\
			-1 \\
			0 \\
			0
		\end{matrix} \right), \left( \begin{matrix}
			0 \\
			0 \\
			1 \\
			0 \\
			0 \\
			0
		\end{matrix} \right), \left( \begin{matrix}
			0 \\
			0 \\
			0 \\
			0 \\
			0 \\
			1
		\end{matrix} \right) \right\rangle \cong \mathbb{K}^3.$$
		Thus we get $\mathcal{R}_2^+(X)$ by replacing the vector space at $2$ by $\mathbb{K}^3$ and defining the morphisms for $\tilde{\alpha}$ and $\tilde{\beta}$ by the composition of the kernel inclusion $\mathbb{K}^3 \longrightarrow \mathbb{K}^6$ and the respective projection to $X_1$ and $X_3$. It gives the following result.
		\begin{center}
			\begin{tikzpicture}[->,line width=0.6mm,>= angle 60,color=black, scale=1.3]
				
				\node (Q) at (-0.7,0.5){$\mathcal{R}_2^+(X)=$};
				\node (1) at (0,0){$\mathbb{K}^3$};
				\node (2) at (2,0){$\mathbb{K}^3$};
				\node (3) at (4,0){$\mathbb{K}^3$};
				\node (4) at (6,0){$\mathbb{K}^2$};
				\draw (2) -- node[above]{\small $\left[\begin{matrix}
						1 & 0 & 0 \\
						0 & 0 & 0\\
						0 & 1 & 0 \\
					\end{matrix} \right]$}(1);
				\draw (2) -- node[above]{\small $\left[\begin{matrix}
						-1 & 0 & 0 \\
						0 & 0 & 0\\
						0 & 0 & 1 \\
					\end{matrix}\right]$}(3);
				\draw (4) -- node[above]{\small $\left[\begin{matrix}
						0 & 0 \\
						0 & 0 \\
						1 & 0 
					\end{matrix} \right]$} (3);
			\end{tikzpicture}
		\end{center}
	\end{ex}
	The reflection functors are additive, meaning we can understand their actions on objects by knowing their actions on indecomposable objects.
	
	By the following proposition, we recall the action of the reflection functors on $\Ind(Q)$, for $Q$ an $A_n$ type quiver.
	\begin{prop}\label{prop:reflonAntypequivers} Let $Q$ be an $A_n$ type quiver, $v \in Q_0$ and $\llrr{v} \neq K \in \mathcal{I}_n$. Write $\Xi = \sigma_v(Q)$. If $v$ is a sink of $Q$, then $\mathcal{R}_v^+(X_K) \cong X_{K'} \in \rep(\Xi)$ where $$K' = \begin{cases}
			K \cup \{v\} & \text{if either } e(K) = v-1 \text{ or } b(K) = v+1; \\
			K \setminus \{v\} & \text{if either } e(K)=v \text{ or } b(K) = v;\\
			K & \text{otherwise.}
		\end{cases}.$$ If $v$ is a source of $Q$, then $\mathcal{R}_v^-(X_K) = X_{K'}$ where $K'$ is defined as above.
	\end{prop}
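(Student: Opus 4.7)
My plan is to reduce to the action on indecomposables (since $\mathcal{R}_v^+$ is additive) and then compute $\mathcal{R}_v^+(X_K)$ directly from the definition for each $K \in \mathcal{I}_n$ with $K \neq \llrr{v}$. The crucial simplifying observation is that in an $A_n$ type quiver, a sink $v$ has at most two incident arrows, from $v-1$ and from $v+1$ (when these vertices exist), both pointing into $v$. Thus the direct sum defining $Y_v$ has at most two summands, each of dimension $0$ or $1$, and everything reduces to linear algebra on $\mathbb{K}^0$, $\mathbb{K}$ or $\mathbb{K}^2$.

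I would split into cases according to the position of $K$ relative to $v$. First, if $K \cap \{v-1,v,v+1\} = \emptyset$, then the source spaces of the relevant arrows are $0$, so $Y_v = 0$; nothing else changes and we get $X_K$, giving $K' = K$. Second, if $e(K) = v-1$ (so $v,v+1 \notin K$), the sum reduces to the zero map $\mathbb{K} \to 0$, whose kernel is $\mathbb{K}$; the map $Y_{\tilde{\alpha}} : Y_v \to X_{v-1}$ is the identity, which glues on a new $\mathbb{K}$ at $v$ and yields $X_{K \cup \{v\}}$. The symmetric case $b(K) = v+1$ is identical. Third, if $v \in K$ with $b(K) = v$ and $e(K) > v$ (so $v-1 \notin K$), the sum reduces to $\mathrm{Id}_\mathbb{K} : \mathbb{K} \to \mathbb{K}$, whose kernel is $0$; hence $Y_v = 0$ and the remaining data gives $X_{K \setminus \{v\}}$. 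The symmetric case $e(K) = v$, $b(K) < v$ is identical.

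The remaining, slightly subtler case is $b(K) < v < e(K)$. Here both $X_{v-1}$ and $X_{v+1}$ equal $\mathbb{K}$, and the two structure maps of $X_K$ into $X_v = \mathbb{K}$ are both identities. The sum map is $(a,b) \mapsto a+b$, with one-dimensional kernel spanned by $(1,-1)$. I then need to check that $Y_{\tilde\alpha}$ and $Y_{\tilde\beta}$, obtained by composing the kernel inclusion with the two projections, are both isomorphisms; invoking \cref{thm:indecandmorphtypeA}\ref{indectypeA} on the resulting dimension vector $\vdim(X_K)$ identifies $\mathcal{R}_v^+(X_K) \cong X_K$, giving $K' = K$ as claimed. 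For the dual statement when $v$ is a source, I would either run the same analysis with kernels replaced by cokernels, or appeal to the standard duality $\mathcal{R}_v^- \cong D\circ \mathcal{R}_v^+ \circ D$ with $D = \Hom_\mathbb{K}(-,\mathbb{K})$, combined with the fact that $D(X_K) \cong X_K$ in $\rep(Q^{\op})$.

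The proof is essentially a bookkeeping exercise, so the main obstacle is not mathematical depth but organizing the case split cleanly and handling boundary vertices $v = 1$ and $v = n$ (where one of the neighbours $v-1$ or $v+1$ does not exist) without introducing a proliferation of sub-cases. I would handle this uniformly by agreeing that summands indexed by nonexistent arrows are simply absent, which makes the boundary cases degenerate instances of the interior ones already treated.
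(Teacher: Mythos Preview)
Your case-by-case computation is correct and complete: you have covered all positions of $K$ relative to $v$ (using that $K$ is an interval to rule out impossible configurations like $v-1,v+1\in K$ with $v\notin K$), and the kernel computations in each case are accurate, including the antidiagonal kernel in the interior case $b(K)<v<e(K)$. Your handling of boundary vertices and the duality argument for the source case are also fine.

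There is nothing to compare against, however: the paper does not prove this proposition. It is stated as a recall of a standard fact about reflection functors on $A_n$ quivers, with no argument given. Your proposal therefore supplies a proof where the paper provides none, and the direct computation you outline is exactly the kind of argument one would expect for such a statement.
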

	Note that, if $v$ is a sink of $Q$, $\mathcal{R}_v^+(X_{\llrr{v}}) = 0$, and if $v$ is a source of $Q$, $\mathcal{R}_v^-(X_{\llrr{v}}) = 0$. We also recall the following result, which will be helpful later.
	\begin{theorem} \label{thm:eqofcatrefl}
		Let $Q$ be a quiver, and $v$ be one of its sinks. Write $\Xi = \sigma_v(Q)$. The reflection functor $\mathcal{R}_v^+ : \rep(Q) \longrightarrow \rep(\Xi)$ induces a category equivalence between the full subcategory of $\rep(Q)$ additively generated by the indecomposable representations of $Q$ except the simple projective representation at $v$ and the full subcategory of $\rep(\Xi)$ additively generated by indecomposable representations of $\Xi$ except the simple injective representation at $v$. The quasi-inverse is induced by the reflection functor $\mathcal{R}_v^-:\rep(\Xi) \longrightarrow \rep(Q)$.
	\end{theorem}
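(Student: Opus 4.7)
The plan is to establish the classical BGP result directly by showing that $\mathcal{R}_v^+$ and $\mathcal{R}_v^-$ are mutually quasi-inverse on the stated subcategories. The first step is a characterization of these subcategories via the defining maps. Since $v$ is a sink of $Q$, the simple at $v$ is projective, and a representation $X\in\rep(Q)$ has no summand isomorphic to $S_v$ if and only if the canonical morphism $\phi_X:\bigoplus_{\alpha\in Q_1,\,t(\alpha)=v}X_{s(\alpha)}\to X_v$ is surjective: any vector not in the image generates an $S_v$ summand. Dually, since $v$ is a source of $\Xi=\sigma_v(Q)$, the simple at $v$ is injective, and $Y\in\rep(\Xi)$ has no summand isomorphic to this simple injective iff the canonical map $\psi_Y:Y_v\to\bigoplus_{\beta\in\Xi_1,\,s(\beta)=v}Y_{t(\beta)}$ is injective.

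The heart of the proof is the calculation $\mathcal{R}_v^-\mathcal{R}_v^+(X)\cong X$ for any $X$ with no $S_v$ summand. Setting $Y=\mathcal{R}_v^+(X)$, we have $Y_q=X_q$ and $Y_\beta=X_\beta$ away from $v$. At $v$, $Y_v=\Ker\phi_X$, and the maps $Y_{\tilde\alpha}$ are the restrictions to $Y_v$ of the coordinate projections of $\bigoplus_{t(\alpha)=v}X_{s(\alpha)}$. The construction of $\mathcal{R}_v^-(Y)$ replaces $Y_v$ by the cokernel of the total map $Y_v\to\bigoplus_{s(\tilde\alpha)=v}Y_{t(\tilde\alpha)}=\bigoplus_{t(\alpha)=v}X_{s(\alpha)}$, which in our situation is exactly the inclusion $\Ker\phi_X\hookrightarrow\bigoplus_{t(\alpha)=v}X_{s(\alpha)}$. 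By the first isomorphism theorem its cokernel is $\IIm\phi_X=X_v$, using surjectivity. The dual calculation shows $\mathcal{R}_v^+\mathcal{R}_v^-(Y)\cong Y$ for $Y$ with no simple injective summand at $v$.

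To promote these object-level identifications into a category equivalence, one defines $\mathcal{R}_v^+$ on a morphism $f:X\to X'$ by taking $(\mathcal{R}_v^+f)_q=f_q$ for $q\neq v$, and letting $(\mathcal{R}_v^+f)_v$ be the map of kernels induced through the commutative square whose vertical maps are $\bigoplus_{t(\alpha)=v}f_{s(\alpha)}$ and $f_v$, and whose horizontal maps are $\phi_X$ and $\phi_{X'}$. A routine diagram chase then shows that the isomorphisms produced in the previous paragraph are natural in $X$ and $Y$, establishing the asserted equivalence. The main subtlety is the last identification of the second paragraph: one must verify that the arrow maps reconstructed by $\mathcal{R}_v^-$ coincide with the original $X_\alpha$ and not merely that the vector spaces match. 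This follows because the composite of the inclusion of each summand $X_{s(\alpha)}\hookrightarrow\bigoplus_{t(\alpha)=v}X_{s(\alpha)}$ with the quotient by $\Ker\phi_X$ is, again by the first isomorphism theorem, exactly $\phi_X$ restricted to that summand, namely $X_\alpha$.
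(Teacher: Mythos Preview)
Your argument is correct and is essentially the standard Bernstein--Gelfand--Ponomarev proof. Note, however, that the paper does not actually prove this theorem: immediately after the statement it simply writes ``See \cite[Theorem VII.5.3]{ASS06} for more details'' and moves on. So there is nothing to compare your approach against; you have supplied a self-contained proof where the paper defers to the literature.

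One small point worth tightening: you should observe explicitly that $\mathcal{R}_v^+$ lands in the correct subcategory of $\rep(\Xi)$, i.e.\ that $Y=\mathcal{R}_v^+(X)$ has no simple injective summand at $v$. This is immediate from your setup, since the total map $\psi_Y:Y_v\to\bigoplus_{t(\alpha)=v}X_{s(\alpha)}$ is precisely the kernel inclusion and hence injective, but it deserves a sentence so that the composite $\mathcal{R}_v^+\mathcal{R}_v^-$ is well-defined on the target subcategory before you invoke the dual calculation.
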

	See \cite[Theorem VII.5.3]{ASS06} for more details.
	
	\section{Jordan recoverability and canonical Jordan recoverability}
	\label{s:JRandNJR}
	
	\subsection{The Greene--Kleitman invariant}
	\label{ss:GKinv}
	In this subsection, for a given $A_n$ type quiver $Q$, we introduce a combinatorial invariant for any representation of $Q$, whose representation-theoretic meaning will be shown in the next subsection. Before introducing this invariant, we need to recall some definitions and give some notations.
	
	Recall that, given a quiver $Q$, the \new{Auslander--Reiten quiver of $\rep(Q)$} is a quiver whose vertex set is the set of isomorphism classes of indecomposable representations of $Q$ and whose arrow set is the set of irreducible morphisms between the indecomposable representations.
	
	Given a positive integer $m$, an \new{integer partition of $m$} is a finite weakly decreasing sequence  of positive integers $\lambda = (\lambda_1, \lambda_2, \ldots, \lambda_k)$ such that $\lambda_1 + \cdots + \lambda_k = m$. The \emph{length} $\ell(\lambda)$ of such an integer partition $\lambda$ is $k$. Its \emph{size} is $|\lambda| = \lambda_1 + \cdots + \lambda_k = m$. We write $\lambda \vdash m$ when $\lambda$ is a partition of $m$.
	
	Let $Q$ be an $A_n$ type quiver. For all $q \in Q_0$, define the subcategory \new{$\mathscr{C}_{Q,q}$} of $\rep(Q)$ by $\mathscr{C}_{Q,q} = \Cat_Q(\{K \in \mathcal{I}_n \mid q \in K \}).$ Fix $X \in \rep(Q)$. We decompose $X$ as below,  with $m_K = \mult(X_K,X) \in \mathbb{N}$. $$X \cong \bigoplus_{K \in \mathcal{I}_n} X_K^{m_K}$$ Consider the full subquiver of the Auslander--Reiten quiver whose vertices are isomorphism classes of indecomposable representations of $Q$ in $\mathscr{C}_{Q,q}$. For $\ell \geqslant 1$, we consider $\Pi_{q}^\ell$ the set of all $\ell$-tuples of maximal paths in this subquiver. Note that these paths start at the vertex corresponding to the projective representation $P_q$ and end at the injective representation $I_q$. For all $\ell$-tuples of paths in the Auslander--Reiten quiver $\gamma = (\gamma_1, \ldots, \gamma_\ell)$, we write $\Supp(\gamma)$ for the set of vertices passed through by some $\gamma_i$. For all $\gamma \in \Pi_{q}^\ell$, we consider a \new{weight} depending on $X$ defined as  follows: $$\wt_X(\gamma) = \sum_{X_K \in \Supp(\gamma)} m_K.$$
	\begin{definition}\label{combwayGenJF} The \new{Greene--Kleitman invariant of $X$}, denoted $\GK(X)$, is the $n$-tuple of partitions  $\pmb{\lambda} = (\lambda^q)_{q \in Q_0}$ with $\lambda^q$ such that:
		\begin{enumerate}[label = $\bullet$]
			\item $\displaystyle \lambda^q_1 = \max_{\gamma \in \Pi_{q}^1} \wt_X (\gamma)$;
			\item $\displaystyle \forall i \geqslant 2,\ \lambda_i^q = \max_{\gamma \in \Pi_{q}^i} \wt_X(\gamma) - \max_{\gamma \in \Pi_{q}^{i-1}} \wt_X(\gamma)
			$.
		\end{enumerate}
	\end{definition}
	\begin{remark}
		This definition is a restatement of \cite{GK76}, already made in \cite{GPT19}.
	\end{remark}
	\begin{ex} \label{ex:GK} Consider $Q = \overrightarrow{A_5}$. Let $X \in \rep(Q)$. We can picture $X$, up to isomorphism, as a \new{filling} of the Auslander--Reiten quiver: meaning a function $\phi_X$ which associates each $X_K$ to $m_K = \mult(X_K,X)$. We pictured the Auslander--Reiten quiver of $\overrightarrow{A_5}$, and an example of a representation $X$ in \cref{fig:exAR}.
		\begin{sidewaysfigure*}
			\centering  
			{\color{white}{\rule{0.75\textheight}{0.5\textheight}} }
			\raisebox{-0.5\height}{\begin{tikzpicture}[->,line width=0.6mm,>= angle 60,color=black,scale=0.8]
					\node (Q) at (-1,0){$\overrightarrow{A_5}=$};
					\node (1) at (0,0){$1$};
					\node (2) at (2,0){$2$};
					\node (3) at (4,0){$3$};
					\node (4) at (6,0){$4$};
					\node (5) at (8,0){$5$};
					\draw (1) -- (2);
					\draw (2) -- (3);
					\draw (3) -- (4);
					\draw (4) -- (5);
					
			\end{tikzpicture}} \scalebox{0.75}{\raisebox{-0.5\height}{\begin{tikzpicture}[->,line width=0.2mm,>= angle 60,scale=1.3]
						
						\node[black] (1) at (0,0){$\llrr{5}$};
						\node[black] (2) at (1,1){$\llrr{4,5}$};
						\node[black] (3) at (2,2){$\llrr{3,5}$};
						\node[black] (4) at (3,3){$\llrr{2,5}$};
						\node[black] (5) at (4,4){$\llrr{1,5}$};
						\node[black] (6) at (2,0){$\llrr{4}$};
						\node[black] (7) at (3,1){$\llrr{3,4}$};
						\node[black] (8) at (4,2){$\llrr{2,4}$};
						\node[black] (9) at (5,3){$\llrr{1,4}$};
						\node[black] (10) at (4,0){$\llrr{3}$};
						\node[black] (11) at (5,1){$\llrr{2,3}$};
						\node[black] (12) at (6,2){$\llrr{1,3}$};
						\node[black] (13) at (6,0){$\llrr{2}$};
						\node[black] (14) at (7,1){$\llrr{1,2}$};
						\node[black] (15) at (8,0){$\llrr{1}$};  
						
						\draw (1) -- (2);
						\draw (2) -- (3);
						\draw (2) -- (6);
						\draw (3) -- (4);
						\draw (3) -- (7);
						\draw (4) -- (5);
						\draw (4) -- (8);
						\draw (5) -- (9);
						\draw (6) -- (7);
						\draw (7) -- (10);
						\draw (7) -- (8);
						\draw (8) -- (11);
						\draw (8) -- (9);
						\draw (9) -- (12);
						\draw (10) -- (11);
						\draw (11) -- (12);
						\draw (11) -- (13);
						\draw (12) -- (14);
						\draw (13) -- (14);
						\draw (14) -- (15);

						\draw[dashed] (6) -- (1);
						\draw[dashed] (10) -- (6);
						\draw[dashed] (13) -- (10);
						\draw[dashed] (15) -- (13);
						
						\draw[dashed] (7) -- (2);
						\draw[dashed] (11) -- (7);
						\draw[dashed] (14) -- (11);
						
						\draw[dashed] (8) -- (3);
						\draw[dashed] (12) -- (8);
						
						\draw[dashed] (9) -- (4);
						
						\begin{scope}[xshift=10cm]
							\node[circle,draw,black,line width=0.3mm] (1) at (0,0){$\mathbf 0$};
							\node[circle,draw,black,line width=0.3mm] (2) at (1,1){$\bf 1$};
							\node[circle,draw,black,line width=0.3mm] (3) at (2,2){$\bf 1$};
							\node[circle,draw,black,line width=0.3mm] (4) at (3,3){$\bf 2$};
							\node[circle,draw,black,line width=0.3mm](5) at (4,4){$\bf 2$};
							\node[circle,draw,black,line width=0.3mm](6) at (2,0){$\bf 1$};
							\node[circle,draw,black,line width=0.3mm] (7) at (3,1){$\bf 3$};
							\node[circle,draw,black,line width=0.3mm] (8) at (4,2){$\bf 2$};
							\node[circle,draw,black,line width=0.3mm] (9) at (5,3){$\bf 4$};
							\node[circle,draw,black,line width=0.3mm](10) at (4,0){$\bf 1$};
							\node[circle,draw,black,line width=0.3mm] (11) at (5,1){$\bf 2$};
							\node[circle,draw,black,line width=0.3mm] (12) at (6,2){$\bf 1$};
							\node[circle,draw,black,line width=0.3mm] (13) at (6,0){$\bf 2$};
							\node[circle,draw,black,line width=0.3mm] (14) at (7,1){$\bf 2$};
							\node[circle,draw,black,line width=0.3mm] (15) at (8,0){$\bf 1$};  
							
							\draw (1) -- (2);
							\draw (2) -- (3);
							\draw (2) -- (6);
							\draw (3) -- (4);
							\draw (3) -- (7);
							\draw (4) -- (5);
							\draw (4) -- (8);
							\draw (5) -- (9);
							\draw (6) -- (7);
							\draw (7) -- (10);
							\draw (7) -- (8);
							\draw (8) -- (11);
							\draw (8) -- (9);
							\draw (9) -- (12);
							\draw (10) -- (11);
							\draw (11) -- (12);
							\draw (11) -- (13);
							\draw (12) -- (14);
							\draw (13) -- (14);
							\draw (14) -- (15);

							\draw[dashed] (6) -- (1);
							\draw[dashed] (10) -- (6);
							\draw[dashed] (13) -- (10);
							\draw[dashed] (15) -- (13);
							
							\draw[dashed] (7) -- (2);
							\draw[dashed] (11) -- (7);
							\draw[dashed] (14) -- (11);
							
							\draw[dashed] (8) -- (3);
							\draw[dashed] (12) -- (8);
							
							\draw[dashed] (9) -- (4);
						\end{scope}
						
			\end{tikzpicture}}}
			\caption{\label{fig:exAR} The quiver $\overrightarrow{A}_5$, its Auslander--Reiten quiver and a choice of a representation $X$ seen as a \emph{filling} of the AR-quiver. We label $\llrr{i,j}$ the vertex corresponding to the isomorphism classes of the indecomposable representation $X_{\llbracket i,j \rrbracket}$ in $\rep(\overrightarrow{A_5})$. The dashed arrows correspond to the action of the Auslander--Reiten translation $\tau$.}       
			\scalebox{0.55}{ \begin{tikzpicture}[->,line width=0.2mm,>= angle 60,scale=1.2]
					\tkzDefPoint(4,1.9){O}	\tkzDefPoint(4,4.6){a}
					\tkzDefPointsBy[rotation=center O angle 360/4](a,b,c){b,c,d}
					\tkzDrawPolygon[line width = 2mm, color = black, fill = black!10](a,b,c,d);
					
					\tkzDefPoint(2,2){e}
					\tkzDefPoint(3,1){f}
					\tkzDefPoint(5,3){g}
					\tkzDefPoint(6,2){h}
					\draw[-,line width=6mm, black!60](e) edge (f);
					\draw[-,line width=6mm, black!60](f) edge (g);
					\draw[-,line width=6mm, black!60](g) edge (h);
					
					\node[circle,draw,black,line width=0.3mm] (1) at (0,0){$\mathbf 0$};
					\node[circle,draw,black,line width=0.3mm] (2) at (1,1){$\bf 1$};
					\node[circle,draw,black,line width=0.3mm] (3) at (2,2){$\bf 1$};
					\node[circle,draw,black,line width=0.3mm] (4) at (3,3){$\bf 2$};
					\node[circle,draw,black,line width=0.3mm](5) at (4,4){$\bf 2$};
					\node[circle,draw,black,line width=0.3mm](6) at (2,0){$\bf 1$};
					\node[circle,draw,black,line width=0.3mm] (7) at (3,1){$\bf 3$};
					\node[circle,draw,black,line width=0.3mm] (8) at (4,2){$\bf 2$};
					\node[circle,draw,black,line width=0.3mm] (9) at (5,3){$\bf 4$};
					\node[circle,draw,black,line width=0.3mm](10) at (4,0){$\bf 1$};
					\node[circle,draw,black,line width=0.3mm] (11) at (5,1){$\bf 2$};
					\node[circle,draw,black,line width=0.3mm] (12) at (6,2){$\bf 1$};
					\node[circle,draw,black,line width=0.3mm] (13) at (6,0){$\bf 2$};
					\node[circle,draw,black,line width=0.3mm] (14) at (7,1){$\bf 2$};
					\node[circle,draw,black,line width=0.3mm] (15) at (8,0){$\bf 1$};  
					
					\draw (1) -- (2);
					\draw (2) -- (3);
					\draw (2) -- (6);
					\draw (3) -- (4);
					\draw (3) -- (7);
					\draw (4) -- (5);
					\draw (4) -- (8);
					\draw (5) -- (9);
					\draw (6) -- (7);
					\draw (7) -- (10);
					\draw (7) -- (8);
					\draw (8) -- (11);
					\draw (8) -- (9);
					\draw (9) -- (12);
					\draw (10) -- (11);
					\draw (11) -- (12);
					\draw (11) -- (13);
					\draw (12) -- (14);
					\draw (13) -- (14);
					\draw (14) -- (15);

					\draw[dashed] (6) -- (1);
					\draw[dashed] (10) -- (6);
					\draw[dashed] (13) -- (10);
					\draw[dashed] (15) -- (13);
					
					\draw[dashed] (7) -- (2);
					\draw[dashed] (11) -- (7);
					\draw[dashed] (14) -- (11);
					
					\draw[dashed] (8) -- (3);
					\draw[dashed] (12) -- (8);
					
					\draw[dashed] (9) -- (4);
					\node[black] at (4,-1.5){\Huge $\max_{\Pi_{3}^1} \wt_X(\gamma) = 11$};
					
					\begin{scope}[xshift = 9.5cm]
						\tkzDefPoint(4,1.9){O}	\tkzDefPoint(4,4.6){a}
						\tkzDefPointsBy[rotation=center O angle 360/4](a,b,c){b,c,d}
						\tkzDrawPolygon[line width = 2mm, color = black, fill = black!10](a,b,c,d);
						
						\tkzDefPoint(2,2){e}
						\tkzDefPoint(3,1){f}
						\tkzDefPoint(4,2){g}
						\tkzDefPoint(5,1){h}
						\tkzDefPoint(6,2){i}
						\tkzDefPoint(4,4){j}
						\draw[-,line width=6mm, black!60](e) edge (f);
						\draw[-,line width=6mm, black!60](f) edge (g);
						\draw[-,line width=6mm, black!60](g) edge (h);
						\draw[-,line width=6mm, black!60](h) edge (i);
						\draw[-,line width=6mm,, black!60](e) edge (j);
						\draw[-,line width=6mm, black!60](j) edge (i);
						
						\node[circle,draw,black,line width=0.3mm] (1) at (0,0){$\mathbf 0$};
						\node[circle,draw,black,line width=0.3mm] (2) at (1,1){$\bf 1$};
						\node[circle,draw,black,line width=0.3mm] (3) at (2,2){$\bf 1$};
						\node[circle,draw,black,line width=0.3mm] (4) at (3,3){$\bf 2$};
						\node[circle,draw,black,line width=0.3mm](5) at (4,4){$\bf 2$};
						\node[circle,draw,black,line width=0.3mm](6) at (2,0){$\bf 1$};
						\node[circle,draw,black,line width=0.3mm] (7) at (3,1){$\bf 3$};
						\node[circle,draw,black,line width=0.3mm] (8) at (4,2){$\bf 2$};
						\node[circle,draw,black,line width=0.3mm] (9) at (5,3){$\bf 4$};
						\node[circle,draw,black,line width=0.3mm](10) at (4,0){$\bf 1$};
						\node[circle,draw,black,line width=0.3mm] (11) at (5,1){$\bf 2$};
						\node[circle,draw,black,line width=0.3mm] (12) at (6,2){$\bf 1$};
						\node[circle,draw,black,line width=0.3mm] (13) at (6,0){$\bf 2$};
						\node[circle,draw,black,line width=0.3mm] (14) at (7,1){$\bf 2$};
						\node[circle,draw,black,line width=0.3mm] (15) at (8,0){$\bf 1$};  
						
						\draw (1) -- (2);
						\draw (2) -- (3);
						\draw (2) -- (6);
						\draw (3) -- (4);
						\draw (3) -- (7);
						\draw (4) -- (5);
						\draw (4) -- (8);
						\draw (5) -- (9);
						\draw (6) -- (7);
						\draw (7) -- (10);
						\draw (7) -- (8);
						\draw (8) -- (11);
						\draw (8) -- (9);
						\draw (9) -- (12);
						\draw (10) -- (11);
						\draw (11) -- (12);
						\draw (11) -- (13);
						\draw (12) -- (14);
						\draw (13) -- (14);
						\draw (14) -- (15);

						\draw[dashed] (6) -- (1);
						\draw[dashed] (10) -- (6);
						\draw[dashed] (13) -- (10);
						\draw[dashed] (15) -- (13);
						
						\draw[dashed] (7) -- (2);
						\draw[dashed] (11) -- (7);
						\draw[dashed] (14) -- (11);
						
						\draw[dashed] (8) -- (3);
						\draw[dashed] (12) -- (8);
						
						\draw[dashed] (9) -- (4);
						\node[black] at (4,-1.5){\Huge $\max_{\Pi_{3}^2} \wt_X(\gamma) = 17$};
					\end{scope}
					
					\begin{scope}[xshift=19cm]
						\tkzDefPoint(4,1.9){O}	\tkzDefPoint(4,4.6){a}
						\tkzDefPointsBy[rotation=center O angle 360/4](a,b,c){b,c,d}
						\tkzDrawPolygon[line width = 2mm, color = black, fill = black!10](a,b,c,d);
						
						\tkzDefPoint(2,2){e}
						\tkzDefPoint(3,1){f}
						\tkzDefPoint(4,2){g}
						\tkzDefPoint(5,1){h}
						\tkzDefPoint(6,2){i}
						\tkzDefPoint(4,4){j}
						\tkzDefPoint(4,0){k}
						\draw[-,line width=6mm, black!60](e) edge (f);
						\draw[-,line width=6mm, black!60](f) edge (g);
						\draw[-,line width=6mm, black!60](g) edge (h);
						\draw[-,line width=6mm, black!60](h) edge (i);
						\draw[-,line width=6mm, black!60](e) edge (j);
						\draw[-,line width=6mm, black!60](j) edge (i);
						\draw[-,line width=6mm, black!60](e) edge (k);
						\draw[-,line width=6mm, black!60](k) edge (i);
						
						\node[circle,draw,black,line width=0.3mm] (1) at (0,0){$\mathbf 0$};
						\node[circle,draw,black,line width=0.3mm] (2) at (1,1){$\bf 1$};
						\node[circle,draw,black,line width=0.3mm] (3) at (2,2){$\bf 1$};
						\node[circle,draw,black,line width=0.3mm] (4) at (3,3){$\bf 2$};
						\node[circle,draw,black,line width=0.3mm](5) at (4,4){$\bf 2$};
						\node[circle,draw,black,line width=0.3mm](6) at (2,0){$\bf 1$};
						\node[circle,draw,black,line width=0.3mm] (7) at (3,1){$\bf 3$};
						\node[circle,draw,black,line width=0.3mm] (8) at (4,2){$\bf 2$};
						\node[circle,draw,black,line width=0.3mm] (9) at (5,3){$\bf 4$};
						\node[circle,draw,black,line width=0.3mm](10) at (4,0){$\bf 1$};
						\node[circle,draw,black,line width=0.3mm] (11) at (5,1){$\bf 2$};
						\node[circle,draw,black,line width=0.3mm] (12) at (6,2){$\bf 1$};
						\node[circle,draw,black,line width=0.3mm] (13) at (6,0){$\bf 2$};
						\node[circle,draw,black,line width=0.3mm] (14) at (7,1){$\bf 2$};
						\node[circle,draw,black,line width=0.3mm] (15) at (8,0){$\bf 1$};  
						
						\draw (1) -- (2);
						\draw (2) -- (3);
						\draw (2) -- (6);
						\draw (3) -- (4);
						\draw (3) -- (7);
						\draw (4) -- (5);
						\draw (4) -- (8);
						\draw (5) -- (9);
						\draw (6) -- (7);
						\draw (7) -- (10);
						\draw (7) -- (8);
						\draw (8) -- (11);
						\draw (8) -- (9);
						\draw (9) -- (12);
						\draw (10) -- (11);
						\draw (11) -- (12);
						\draw (11) -- (13);
						\draw (12) -- (14);
						\draw (13) -- (14);
						\draw (14) -- (15);

						\draw[dashed] (6) -- (1);
						\draw[dashed] (10) -- (6);
						\draw[dashed] (13) -- (10);
						\draw[dashed] (15) -- (13);
						
						\draw[dashed] (7) -- (2);
						\draw[dashed] (11) -- (7);
						\draw[dashed] (14) -- (11);
						
						\draw[dashed] (8) -- (3);
						\draw[dashed] (12) -- (8);
						
						\draw[dashed] (9) -- (4);
						\node[black] at (4,-1.5){\Huge $\max_{\Pi_{3}^3} \wt_X(\gamma) = 18$};
					\end{scope}        
					
			\end{tikzpicture}}

			\caption{\label{fig:explitGK} Explicit way to calculate $\displaystyle \max_{\Pi_{3}^\ell} \wt_X(\gamma)$ for the representation $X$ defined in \cref{fig:exAR}.}
			
		\end{sidewaysfigure*}
		In \cref{fig:explitGK}, we represent how we calculate the integer partition $\lambda^3$. One can calculate $\lambda^q$ following the same process for $q \in Q_0$. By doing all the calculations, we get $\displaystyle \GK(X) = ((10), (11,6),(\mathbf{11}, \mathbf{17} - \mathbf {11} = 6,  \mathbf {18} - \mathbf{17} = 1),(11,5),(6)).$ \qedhere
	\end{ex}
	\begin{remark} \label{rem:GKnotcomplete}  We can note that if $X \cong Y$ then $\GK(X) = \GK(Y)$ by definition: the filling of the Auslander--Reiten quivers for $X$ and for $Y$ are the same. This property explains why $\GK$ is an \textit{invariant of $\rep(Q)$}. 
	\end{remark}
	Following this remark, one can be interested in determining for which subcategories $\mathscr{C}$ of $\rep(Q)$ the Greene--Kleitman invariant is complete. 
	
	In the next subsection, we will see that answering this question is equivalent to characterizing all the Jordan recoverable subcategories of $\rep(Q)$. In this paper, we aim to partially answer this question by characterizing all the canonically Jordan recoverable subcategories of $\rep(Q)$. 
	
	\subsection{Jordan recoverability}
	\label{ss:JR}
	
	Let $Q$ be an $A_n$ type quiver. Consider $X \in \rep(Q)$. A \new{nilpotent endomorphism} $N : X \longrightarrow X$ is an endomorphism such that $N^k = 0$ for some integer $k > 0$. One can think of a nilpotent endomorphism $N$ as a collection of nilpotent transformations $(N_q)_{q \in Q_0}$ satisfying an additional compatibility relation. Denote $\NEnd(X)$ the set of nilpotent endomorphisms of $X$. 
	
	Let $\vdim(X) = \pmb{d} = (d_q)_{q \in Q_0}$. For any $N \in \NEnd(X)$, we can consider the Jordan form of $N_q$ at each vertex $q$. It induces a sequence of partitions $\lambda^q \vdash d_q$. We refer to $(\lambda^q)_{q\in Q_0}$ as the \new{Jordan form data of $N$}. Denote it by $\JF(N)$. 
	
	The \new{dominance order} on partitions of an integer $n$ is defined as it follows: for any $\lambda$ and $\mu$ partitions of $n$, $\lambda \unlhd \mu$ if $\lambda_1 + \ldots + \lambda_k \leqslant \mu_1 + \ldots + \mu_k$ for each $k \geqslant 1$ where we add zero parts to $\lambda$ and $\mu$ if necessary. 
	
	We extend this order to any $n$-tuple of partitions. Introduce first a notation: for $\pmb{d} = (d_1, \ldots, d_n) \in \mathbb{N}^n$ and $\pmb{\pi} = (\pi^1, \ldots, \pi^n)$ a $n$-tuple of integer partitions, we write $\pmb{\pi} \vdash \pmb{d}$ if $\pi^i \vdash d_i$ for $i \in \{1, \ldots,n\}$. Now fix $\pmb{d} \in \mathbb{N}^n$. For $\pmb{\lambda} = (\lambda^1, \ldots, \lambda^n)$ and $\pmb{\mu} = (\mu^1, \ldots, \mu^n)$ such that $\pmb{\lambda} \vdash \pmb{d}$ and $\pmb{\mu} \vdash \pmb{d}$, we say that $\pmb{\lambda} \pmb{\unlhd} \pmb{\mu}$ if for all $i \in \{1,\ldots,n\}$ $\lambda^i \unlhd \mu^i$.
	
	Before stating a precise result on the generic Jordan form data for any representation of $Q$, we recall a key result from Gansner.
	
	Let $\Gamma$ be a finite acyclic quiver. Label the vertices of $\Gamma$ from $1$ to $\#\Gamma_0$ such that, for all $i,j \in \{1, \ldots, \#\Gamma_0\}$, if there is an arrow $i \longrightarrow j$ in $\Gamma$, then $i < j$. 
	
	For $\ell \geqslant 1$, we write $\Pi_\Gamma^\ell$ the set of all $\ell$-tuples of maximal paths in $\Gamma$. For $\gamma = (\gamma_1, \ldots, \gamma) \in \Pi_\gamma^\ell$, write $\Supp(\gamma)$ the set of vertices passed through by some $\gamma_i$. We define $\Delta(\Gamma) = (\Delta_\ell(\Gamma))_{\ell \geqslant 1}$ for all $\ell \geqslant 1$ by:
	\begin{enumerate}[label = $\bullet$]
		\item $\Delta_1(\Gamma) = \max_{\gamma \in \Pi_\Gamma^1} \#\Supp(\gamma)$
		\item for $\ell \geqslant 2$, $\Delta_\ell(\Gamma) = \max_{\gamma \in \Pi_\Gamma^\ell} \#\Supp(\gamma) - \max_{\gamma \in \Pi_\Gamma^{\ell-1}} \#\Supp(\gamma)$.
	\end{enumerate}
	Note that $\Delta_\ell(\Gamma) \geqslant 0$ for all $\ell \geqslant 1$, and there exists $\ell_0 \geqslant 1$ such that $\Delta_{\ell_0}(\Gamma) = 0$.
	
	We define a \emph{generic matrix of $\Gamma$} as a $\#\Gamma_0 \times \#\Gamma_0$ matrix $M = (m_{i,j})$ where $m_{i,j} = 0$ whenever there is no arrow $i \longrightarrow j$ in $\Gamma$, and the rest of its entries are complex numbers algebraically independent over $\mathbb{Q}$. 
	
	We now state the following result proven by Gansner \cite[Theorem 2.1]{Ga81Ac} and independently by Saks in his thesis \cite[Theorem 6.3]{Saks80}.
	\begin{theorem} \label{thm:Gansner}
		Let $\Gamma$ be a finite acyclic quiver.  Any generic matrix $M$ of $\Gamma$ is nilpotent. Moreover, we have $\JF(M) = \Delta(\Gamma)$.
	\end{theorem}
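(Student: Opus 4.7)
The plan is to reduce the problem to computing the rank sequence $(\operatorname{rank}(M^k))_{k \ge 0}$ via the Lindström--Gessel--Viennot (LGV) lemma, then identify the resulting partition with $\Delta(\Gamma)$ using Greene's theorem on the induced reachability poset.

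Nilpotency is immediate: since $\Gamma$ is acyclic, I would relabel $\Gamma_0$ by a topological sort so that every arrow $i \to j$ satisfies $i < j$; then $M$ becomes strictly upper triangular and $M^{\#\Gamma_0} = 0$. For the Jordan type, the standard rank--Jordan dictionary reduces everything to the rank sequence: the conjugate $\lambda'$ of $\JF(M) = \lambda$ satisfies $\lambda'_k = \operatorname{rank}(M^{k-1}) - \operatorname{rank}(M^k)$, so $\lambda$ is determined by the ranks of powers of $M$. The entries $(M^k)_{u,v} = \sum_{\pi : u \to v,\ |\pi|=k} \prod_{e \in \pi} m_e$ are weighted path sums over length-$k$ directed paths in $\Gamma$. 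For vertex subsets $I, J \subseteq \Gamma_0$ of common size $r$, the LGV lemma expresses $\det((M^k)_{I,J})$ as a signed sum over $r$-tuples of pairwise vertex-disjoint length-$k$ directed paths from $I$ to $J$. Because any disjoint union of simple paths decomposes uniquely into connected components, distinct vertex-disjoint systems contribute distinct edge-monomials; the algebraic independence of the $m_e$ over $\mathbb{Q}$ then precludes cancellation, so $\det((M^k)_{I,J}) \neq 0$ precisely when at least one such system exists. Maximising over $I$ and $J$ yields
\[
\operatorname{rank}(M^k) \;=\; \max\bigl\{\, r \,:\, \Gamma \text{ contains } r \text{ pairwise vertex-disjoint directed paths of length } k \,\bigr\}.
\]

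The final step matches this count with $\Delta(\Gamma)$. Define the poset $P(\Gamma)$ on $\Gamma_0$ by the reachability relation induced by $\Gamma$. Since any chain in $P(\Gamma)$ extends to a source-to-sink path in $\Gamma$, one has $\sum_{i=1}^{\ell} \Delta_i(\Gamma) = \max_{\gamma \in \Pi_\Gamma^{\ell}} \#\Supp(\gamma)$ coincides with the maximum cardinality of a union of $\ell$ chains in $P(\Gamma)$; Greene's theorem then identifies $\Delta(\Gamma)$ as the Greene partition of $P(\Gamma)$, whose conjugate records, for each $k$, the maximum cardinality $\sum_{j=1}^{k} \Delta_j'$ of a union of $k$ antichains in $P(\Gamma)$. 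Converting this dual Greene identity into a statement about length-$k$ paths in $\Gamma$---via a Dilworth/Mirsky-style max-flow/min-cut argument on a suitable layering of $\Gamma$---produces the equality $\operatorname{rank}(M^k) = \#\Gamma_0 - \sum_{j=1}^{k} \Delta_j'$, which through the rank--Jordan dictionary is exactly the desired identification $\JF(M) = \Delta(\Gamma)$. I expect the main technical obstacle to lie precisely in this last translation: the LGV computation produces vertex-disjoint paths of a fixed length $k$, whereas $\Delta(\Gamma)$ is defined via maximal paths, and reconciling the two combinatorial interpretations is the heart of the argument; the rest amounts to assembling LGV, algebraic independence, and the standard rank--Jordan correspondence.
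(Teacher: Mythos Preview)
The paper does not prove this theorem; it is cited as a result of Gansner and Saks and used as a black box. So there is no paper proof to compare against, and I assess your sketch on its own.

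The error lies at the LGV step, not at the final translation you flag. The Lindstr\"om--Gessel--Viennot tail-swapping involution does not preserve individual path lengths, so it cannot be applied to the matrix $M^k$, whose $(u,v)$ entry enumerates only length-$k$ walks. Your conclusion that $\operatorname{rank}(M^k)$ equals the maximum number of pairwise vertex-disjoint length-$k$ paths in $\Gamma$ is therefore unjustified, and in fact false. Take $\Gamma$ on $\{1,2,3,4\}$ with arrows $1\to2$, $1\to3$, $2\to3$, $2\to4$, $3\to4$: every pair of length-$2$ paths shares a vertex, so your formula predicts $\operatorname{rank}(M^2)=1$; but the reachability order is the chain $1<2<3<4$, so $\Delta(\Gamma)=(4)$ and the theorem gives $\operatorname{rank}(M^2)=2$. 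Concretely, the minor of $M^2$ on rows $\{1,2\}$ and columns $\{3,4\}$ is upper triangular with nonzero diagonal entries $m_{12}m_{23}$ and $m_{23}m_{34}$. What LGV on the time-expanded quiver $\Gamma\times\{0,\dots,k\}$, or equivalently iterated Cauchy--Binet, actually yields is that $\operatorname{rank}(M^k)$ is governed by length-$k$ paths that occupy distinct vertices \emph{at each time step}---a strictly weaker condition than vertex-disjointness in $\Gamma$---and even there the non-cancellation claim needs more than your one-line appeal to distinct edge-monomials, since two time-layered systems may use the same multiset of $\Gamma$-edges. Identifying this layered quantity with $\#\Gamma_0$ minus the maximum size of a union of $k$ antichains in the reachability poset is precisely the substantive content of the Gansner and Saks proofs; your sketch does not supply it.
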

	One can notice that the construction of $\Delta(\Gamma)$ is closely similar to the one for calculating the Greene--Kleitman invariant at each vertex of $Q$. These similarities, along with some previous results from \cite{GPT19}, allow us to state the following result. 
	\begin{theorem} \label{thm:GenJFexist}
		Let $Q$ be an $A_n$ type quiver. Let $Y$ be a finite-dimensional representation over an algebraically closed field $\mathbb{K}$. Then $\NEnd(Y)$ is an irreducible algebraic variety. Furthermore, there is a maximum value of $\JF$, with respect to $\pmb{\unlhd}$, on $\NEnd(Y)$ which is attained on a dense open set of $\NEnd(Y)$, and this value is exactly $\GK(Y)$.
	\end{theorem}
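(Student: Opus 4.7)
The plan is to split the statement into three sub-claims: irreducibility of $\NEnd(Y)$, upper semicontinuity of $\JF$ yielding a unique generic value, and the identification of that generic value with $\GK(Y)$. For irreducibility, I would first decompose $Y \cong \bigoplus_{K \in \mathcal{I}_n} X_K^{m_K}$. Using \cref{thm:indecandmorphtypeA} together with the fact that $\End(X_K) \cong \mathbb{K}$, one describes $\End(Y)$ as a direct sum of matrix spaces $\operatorname{Mat}_{m_L \times m_K}(\mathbb{K})$ indexed by pairs $(K,L)$ for which $\Hom(X_K, X_L)$ is nonzero. An endomorphism is nilpotent if and only if its image in the semisimple quotient $\End(Y)/\operatorname{rad}\End(Y) \cong \prod_K \operatorname{Mat}_{m_K}(\mathbb{K})$ is nilpotent; equivalently, each diagonal block is a nilpotent matrix. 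Thus $\NEnd(Y)$ factors as the product of an affine space (the off-diagonal blocks are unrestricted) and finitely many nilpotent cones, each of which is a well-known irreducible variety, so the product is irreducible.

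Existence of the generic Jordan form then follows from the standard fact that $N \mapsto \dim \ker N_q^k$ is lower semicontinuous for each $q \in Q_0$ and each $k \geqslant 1$, which makes $\JF$ upper semicontinuous in $\pmb{\unlhd}$. Intersecting the resulting closed sets over $q$, one obtains for each tuple $\pmb{\mu}$ a closed subvariety $\{N \in \NEnd(Y) : \JF(N) \pmb{\unlhd} \pmb{\mu}\}$; irreducibility forces a unique maximum $\pmb{\lambda}$, attained on a dense open set.

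Finally, to identify $\pmb{\lambda}$ with $\GK(Y)$, I would apply \cref{thm:Gansner} vertex by vertex. Fix $q \in Q_0$; the idea is to construct an acyclic quiver $\Gamma_q$ whose vertices are pairs $(K, i)$ with $q \in K$, $m_K > 0$, and $1 \leqslant i \leqslant m_K$, with arrows encoding irreducible AR-morphisms between the corresponding indecomposable summands of $Y$ viewed inside $\mathscr{C}_{Q,q}$. A generic $N \in \NEnd(Y)$, restricted at $q$ to $Y_q = \bigoplus_{K \ni q} \mathbb{K}^{m_K}$, should specialize to a generic matrix of $\Gamma_q$ in the sense of Gansner, so that $\JF(N_q) = \Delta(\Gamma_q)$. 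The identification $\Delta(\Gamma_q) = \lambda^q$ then follows because $\ell$-tuples of maximal paths in $\Gamma_q$ correspond to $\ell$-tuples of paths in the restricted AR-quiver over $\mathscr{C}_{Q,q}$, with $\#\Supp$ in $\Gamma_q$ translating into the weighted sum $\wt_Y$ since each vertex $X_K$ is blown up into $m_K$ copies in $\Gamma_q$. The main obstacle is precisely this matching step: one must carefully compare the affine parametrization of $\NEnd(Y)$ with the entry-by-entry algebraic independence condition of Gansner's theorem, and check that the nilpotency constraint on the diagonal blocks still leaves enough relevant entries algebraically independent to apply \cref{thm:Gansner} on a dense open subset.
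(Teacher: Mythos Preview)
Your overall strategy matches the paper's: the three-part split is exactly how the paper proceeds, and the identification step via Gansner's theorem with a blown-up quiver is the same idea. Where you differ is in parts one and two: the paper simply cites \cite[Theorem~2.3]{GPT19}, which already establishes irreducibility of $\NEnd(Y)$ and the existence of a $\pmb{\unlhd}$-maximal value of $\JF$ on a dense open set, in the generality of modules over any finite-dimensional algebra. Your hands-on argument (product of nilpotent cones times an affine space, plus semicontinuity of $\dim\ker N_q^k$) is a correct and more self-contained substitute, and it buys transparency at the cost of reproving a known result.

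There is one imprecision in your part three that is worth flagging. Your quiver $\Gamma_q$ has vertices $(K,i)$ for $q\in K$ and $1\leqslant i\leqslant m_K$, but you describe the arrows as ``encoding irreducible AR-morphisms''; those only connect distinct intervals $K\neq L$. The paper's construction (stated tersely) is to replace each vertex $X_K$ of the restricted AR-quiver by a \emph{chain} of length $m_K$: the arrows $(K,i)\to(K,i+1)$ are essential, since the $(K,K)$-diagonal block of a generic $N_q$ is a regular nilpotent $m_K\times m_K$ matrix, i.e.\ a single Jordan block, whose support pattern is precisely a chain. Without these internal arrows, $\Delta(\Gamma_q)$ does not compute the right partition. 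You do hint at this with ``each vertex $X_K$ is blown up into $m_K$ copies'', and you rightly flag the matching with Gansner's algebraic-independence hypothesis as the delicate point---indeed the paper itself glosses over exactly this verification. So the gap is real but minor: once you add the chain arrows and argue that, on a dense open set, the entries of $N_q$ in the positions dictated by $\Gamma(q)$ can be taken algebraically independent (using that the diagonal blocks range over dense opens in their nilpotent cones and the off-diagonal blocks are free), Gansner's \cref{thm:Gansner} applies and your argument closes.
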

	\begin{proof}[Proof] 
		As a direct consequence of \cite[Theorem 2.3]{GPT19} stated in a more considerable generality (instead of finite-dimensional representation $Y$ over an $A_n$ type quiver $Q$, they proved the same result for any finite-dimensional left module $Y$ over a finite-dimensional $\mathbb{K}$-algebra), we can already affirm that $\NEnd(Y)$ is an irreducible algebraic variety, and the fact that $\JF$ admits a maximal value on $\NEnd(Y)$ attained on a dense open set. 
		
		We must prove that this maximal value is $\GK(Y)$. This result is a consequence of \cref{thm:Gansner}. We can in fact notice that $\GK(Y) = (\Delta(\Gamma(q)))_{q \in Q_0}$ where $\Gamma(q)$ is the full subquiver of the Auslander--Reiten quiver of $\rep(Q)$ in which we replace each vertex corresponding to the isomorphism class of  $X_K$ by a chain of length $m_K = \mult(Y,X_K)$. This completes the proof.
	\end{proof} 
	By \cref{thm:GenJFexist}, we can define $\GenJF(X)$, the \new{generic Jordan form data of $X$}, as this maximal value of $\JF$ on $\NEnd(X)$. Keep in mind that $\GenJF(X) = \GK(X)$. We only change its name for representation-theoretic purposes.
	\begin{definition}\label{JRdef}
		A subcategory $\mathscr{C}$ of $\rep(Q)$ is called \new{Jordan recoverable} if from a tuple of partitions $\pmb{\lambda}$ there is at most a unique (up to isomorphism) $X \in \mathscr{C}$ such that $\GenJF(X) = \pmb{\lambda}$.
	\end{definition}
	The Jordan recoverable categories of $\rep(Q)$ are precisely those for which $\GK$ is a complete invariant.
	\begin{ex} \label{ex:JR} Consider the $A_3$ type quiver $Q$ in \cref{fig:AR2}.
		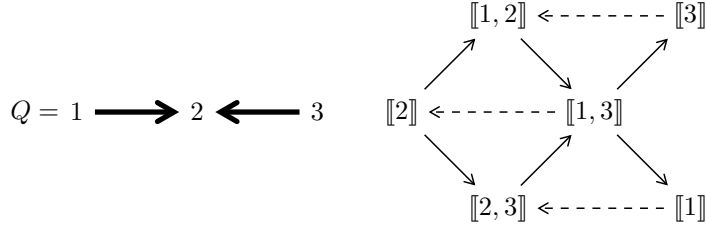
\begin{figure}[ht!]
			\centering
			\begin{tikzpicture}[->,line width=0.7mm,>= angle 60,color=black,scale=0.8]
				\node (Q) at (-.7,0){$Q=$};
				\node (1) at (0,0){$1$};
				\node (2) at (2,0){$2$};
				\node (3) at (4,0){$3$};
				\draw (1) --  (2);
				\draw (3) --  (2);
				\begin{scope}[xshift=7cm, ->,line width=0.2mm,>= angle 60,scale=1.6]
					\node[black] (1) at (-1,0){$\llrr{2}$};
					\node[black] (2) at (0,1){$\llrr{1,2}$};
					\node[black] (3) at (0,-1){$\llrr{2,3}$};
					\node[black] (4) at (1,0){$\llrr{1,3}$};
					\node[black] (5) at (2,1){$\llrr{3}$};
					\node[black] (6) at (2,-1){$\llrr{1}$};
					\draw (1) --  (2);
					\draw (1) -- (3);
					\draw (2) -- (4);
					\draw (3) -- (4);
					\draw (4) -- (5);
					\draw (4) -- (6);
					\draw[dashed] (4) -- (1);
					\draw[dashed] (5) -- (2);
					\draw[dashed] (6) -- (3);
				\end{scope}
			\end{tikzpicture}
			\caption{\label{fig:AR2} The $A_3$ type quiver considered (on the left) and its Auslander--Reiten quiver (on the right).}
		\end{figure}
		
		Here are some examples of Jordan recoverable $\rep(Q)$ subcategories.
		\begin{enumerate}[label = $\bullet$]
			\item The subcategory $\mathscr{C}_1 = \Cat_Q(\{\llrr{1}, \llrr{2}, \llrr{3}\})$ is Jordan recoverable: the dimension vectors of the indecomposable representations which generate $\mathscr{C}_1$ are linearly independent. More explicitly, for $X \in \mathscr{C}_1$, there exists a unique triplet $(a,b,c) \in \mathbb{N}^3$ such that $X \cong X_{\llrr{1}}^a \oplus X_{\llrr{2}}^b \oplus X_{\llrr{3}}^c$. Following calculation of $\GK(X)$ (see \cref{fig:AR2Simples}), we get $\GenJF(X) = ((a),(b),(c))$. This data determines $X \in \mathscr{C}$. 
			\begin{figure}[ht!]
				\centering
				\begin{tikzpicture}[->,line width=0.2mm,>= angle 60,scale=0.75]
					
					\tkzDefPoint(0.15,1){a}
					\tkzDefPoint(3,-1.85){b}
					\tkzDefPoint(3.85,-1){c}
					\tkzDefPoint(1,1.85){d}
					\tkzDrawPolygon[line width = 2mm, color = black, fill = black!10](a,b,c,d);
					
					\node[black,circle,draw] (1) at (0,0){$b$};
					\node[black,circle,draw] (2) at (1,1){$0$};
					\node[black,circle,draw] (3) at (1,-1){$0$};
					\node[black,circle,draw] (4) at (2,0){$0$};
					\node[black,circle,draw] (5) at (3,1){$c$};
					\node[black,circle,draw] (6) at (3,-1){$a$};
					\draw (1) --  (2);
					\draw (1) -- (3);
					\draw (2) -- (4);
					\draw (3) -- (4);
					\draw (4) -- (5);
					\draw (4) -- (6);
					\draw[dashed] (4) -- (1);
					\draw[dashed] (5) -- (2);
					\draw[dashed] (6) -- (3);
					
					\begin{scope}[xshift=5.5cm]
						
						\tkzDefPoint(-0.85,0){a}
						\tkzDefPoint(1,-1.85){b}
						\tkzDefPoint(2.85,0){c}
						\tkzDefPoint(1,1.85){d}
						\tkzDrawPolygon[line width = 2mm, color = black, fill = black!10](a,b,c,d);
						
						\node[black,circle,draw] (1) at (0,0){$b$};
						\node[black,circle,draw] (2) at (1,1){$0$};
						\node[black,circle,draw] (3) at (1,-1){$0$};
						\node[black,circle,draw] (4) at (2,0){$0$};
						\node[black,circle,draw] (5) at (3,1){$c$};
						\node[black,circle,draw] (6) at (3,-1){$a$};
						\draw (1) --  (2);
						\draw (1) -- (3);
						\draw (2) -- (4);
						\draw (3) -- (4);
						\draw (4) -- (5);
						\draw (4) -- (6);
						\draw[dashed] (4) -- (1);
						\draw[dashed] (5) -- (2);
						\draw[dashed] (6) -- (3);
						
					\end{scope}
					
					\begin{scope}[xshift=11cm]
						
						\tkzDefPoint(0.15,-1){a}
						\tkzDefPoint(1,-1.85){b}
						\tkzDefPoint(3.85,1){c}
						\tkzDefPoint(3,1.85){d}
						\tkzDrawPolygon[line width = 2mm, color = black, fill = black!10](a,b,c,d);
						
						\node[black,circle,draw] (1) at (0,0){$b$};
						\node[black,circle,draw] (2) at (1,1){$0$};
						\node[black,circle,draw] (3) at (1,-1){$0$};
						\node[black,circle,draw] (4) at (2,0){$0$};
						\node[black,circle,draw] (5) at (3,1){$c$};
						\node[black,circle,draw] (6) at (3,-1){$a$};
						\draw (1) --  (2);
						\draw (1) -- (3);
						\draw (2) -- (4);
						\draw (3) -- (4);
						\draw (4) -- (5);
						\draw (4) -- (6);
						\draw[dashed] (4) -- (1);
						\draw[dashed] (5) -- (2);
						\draw[dashed] (6) -- (3);
					\end{scope}
				\end{tikzpicture}
				\caption{\label{fig:AR2Simples} The rectangle corresponds to indecomposable representations that are in $\mathscr{C}_{Q,m}$ for $m=1,2,3$ from left to right. We use it as in \cref{fig:explitGK} to get $\GenJF(X)$ for $X \in \mathscr{C}_1$ in \cref{ex:JR}.}
			\end{figure}
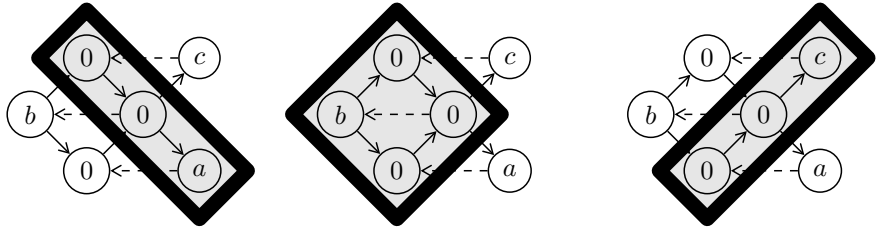 
			
			\item The subcategory $\mathscr{C}_2 = \Cat_Q(\{\llrr{1}, \llrr{1,3}, \llrr{3}\})$ is Jordan recoverable. For $X \in \mathscr{C}$, there exists $(a,b,c) \in \mathbb{N}^3$ such that $X \cong X_{\llrr 1}^a \oplus X_{\llrr{1,3}}^b \oplus X_{\llrr{3}}^c$. Following \cref{fig:AR2Cq2}, we get that $\GenJF(X) = ((a+b),(b),(b+c)).$ Note that we can recover $X \in \mathscr{C}$ from this data. 
			\begin{figure}[ht!]
				\centering
				\begin{tikzpicture}[->,line width=0.2mm,>= angle 60,scale=0.75]
					
					\tkzDefPoint(0.15,1){a}
					\tkzDefPoint(3,-1.85){b}
					\tkzDefPoint(3.85,-1){c}
					\tkzDefPoint(1,1.85){d}
					\tkzDrawPolygon[line width = 2mm, color = black, fill = black!10](a,b,c,d);
					
					\node[black,circle,draw] (1) at (0,0){$0$};
					\node[black,circle,draw] (2) at (1,1){$0$};
					\node[black,circle,draw] (3) at (1,-1){$0$};
					\node[black,circle,draw] (4) at (2,0){$b$};
					\node[black,circle,draw] (5) at (3,1){$c$};
					\node[black,circle,draw] (6) at (3,-1){$a$};
					\draw (1) --  (2);
					\draw (1) -- (3);
					\draw (2) -- (4);
					\draw (3) -- (4);
					\draw (4) -- (5);
					\draw (4) -- (6);
					\draw[dashed] (4) -- (1);
					\draw[dashed] (5) -- (2);
					\draw[dashed] (6) -- (3);
					
					\begin{scope}[xshift=5.5cm]
						
						\tkzDefPoint(-0.85,0){a}
						\tkzDefPoint(1,-1.85){b}
						\tkzDefPoint(2.85,0){c}
						\tkzDefPoint(1,1.85){d}
						\tkzDrawPolygon[line width = 2mm, color = black, fill = black!10](a,b,c,d);
						
						\node[black,circle,draw] (1) at (0,0){$0$};
						\node[black,circle,draw] (2) at (1,1){$0$};
						\node[black,circle,draw] (3) at (1,-1){$0$};
						\node[black,circle,draw] (4) at (2,0){$b$};
						\node[black,circle,draw] (5) at (3,1){$c$};
						\node[black,circle,draw] (6) at (3,-1){$a$};
						\draw (1) --  (2);
						\draw (1) -- (3);
						\draw (2) -- (4);
						\draw (3) -- (4);
						\draw (4) -- (5);
						\draw (4) -- (6);
						\draw[dashed] (4) -- (1);
						\draw[dashed] (5) -- (2);
						\draw[dashed] (6) -- (3);
						
					\end{scope}
					
					\begin{scope}[xshift=11cm]
						
						\tkzDefPoint(0.15,-1){a}
						\tkzDefPoint(1,-1.85){b}
						\tkzDefPoint(3.85,1){c}
						\tkzDefPoint(3,1.85){d}
						\tkzDrawPolygon[line width = 2mm, color = black, fill = black!10](a,b,c,d);
						
						\node[black,circle,draw] (1) at (0,0){$0$};
						\node[black,circle,draw] (2) at (1,1){$0$};
						\node[black,circle,draw] (3) at (1,-1){$0$};
						\node[black,circle,draw] (4) at (2,0){$b$};
						\node[black,circle,draw] (5) at (3,1){$c$};
						\node[black,circle,draw] (6) at (3,-1){$a$};
						\draw (1) --  (2);
						\draw (1) -- (3);
						\draw (2) -- (4);
						\draw (3) -- (4);
						\draw (4) -- (5);
						\draw (4) -- (6);
						\draw[dashed] (4) -- (1);
						\draw[dashed] (5) -- (2);
						\draw[dashed] (6) -- (3);
					\end{scope}
				\end{tikzpicture}
				\caption{\label{fig:AR2Cq2} The way to get $\GenJF(X)$ for $X \in \mathscr{C}_2$ in \cref{ex:JR}.}
			\end{figure} \qedhere
		\end{enumerate}
	\end{ex}
	In general, contrary to the previous example, we must deal with many equations to prove that a subcategory $\mathscr{C} \subset \rep(Q)$ is Jordan recoverable. We can ask whether there is a more general way to recover $X$ from its generic Jordan form data.
	
	\subsection{Generic representation with fixed Jordan form data}
	\label{ss:GenRepDisc}
	
	This section discusses two algebraic ways to get a specific canonical representation from fixed Jordan form data. 
	
	First, we recall the general setting in which we work. Let $Q$ be an $A_n$ type quiver, and $\pmb{d} = (d_q)_{q \in Q_0} \in \mathbb{N}^{n}$. Write $\rep(Q,\pmb{d})$ for the set of $E \in \rep(Q)$ such that $\vdim(E) = \pmb{d}$. One can identify this set as follows \[\rep(Q,\pmb{d}) \cong \prod_{\alpha \in Q_1} \Hom_\mathbb{K}(\mathbb{K}^{d_{s(\alpha)}}, \mathbb{K}^{d_{t(\alpha)}}) = \prod_{\alpha \in Q_1} \operatorname{Mat}_{d_{s(\alpha)} \times d_{t(\alpha)}} (\mathbb{K})\] by choosing a basis for each of the vector spaces. We endow $\rep(Q,\pmb{d})$ with the action of the algebraic group $\pmb{\operatorname{GL}}_{\pmb{d}}(\mathbb{K}) = \prod_{q \in Q_0} \operatorname{GL}_{d_q}(\mathbb{K})$ which changes the basis at each vertex. The orbits of this group action are the isomorphism classes of the representations in $\rep(Q,\pmb{d})$.
	
	Now we present a method developed by \cite{GPT19}. For each $q \in Q_0$, fix a $\mathbb{K}$-vector space $V_q$ such that $\dim(V_q) = d_q$, and a nilpotent endomorphism $N_q : V_q \longrightarrow V_q$. Let $\rep(Q,N)$ be the set of representations $E \in \rep(Q)$ such that, for all $q \in Q_0$, $E_q = V_q$, and $N = (N_q)_{q \in Q_0} \in \NEnd(E)$. 
	
	\begin{prop}[{\cite[Section 2.3]{GPT19}}]
		\label{prop:firstGenRep}
		Let $Q$ be an $A_n$ quiver, and $\pmb{d} \in \mathbb{N}^n$. For any collection of $\mathbb{K}$-vector spaces $(V_q)_{q \in Q_0}$ such that $\dim(V_q) = d_q$, and any collection of nilpotent endomorphisms $N = (N_q: V_q \longrightarrow V_q)_{q \in Q_0}$, the set $\rep(Q,N)$ is an irreducible variety, and there exists a dense open set $\Omega_N \subseteq \rep(Q,N)$ within which the representations are all isomorphic. Moreover, for any collection of nilpotent endomorphisms $N' = (N'_q : V_q \longrightarrow V_q)_{q \in Q_0}$ such that $\JF(N) = \JF(N')$, then the representations of $\Omega_N$ and those of $\Omega_{N'}$ are isomorphic.
	\end{prop}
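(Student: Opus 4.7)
The statement has three parts: (a) $\rep(Q,N)$ is an irreducible variety, (b) it contains a dense open set on which all representations are pairwise isomorphic, and (c) this isomorphism class depends on $N$ only through $\JF(N)$. I would treat them in this order.

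For (a), I would observe that $\rep(Q,N)$ is cut out inside $\prod_{\alpha \in Q_1}\Hom_\mathbb{K}(V_{s(\alpha)}, V_{t(\alpha)})$ by the commutation conditions $N_{t(\alpha)}E_\alpha = E_\alpha N_{s(\alpha)}$ for each $\alpha \in Q_1$. These are linear in the coordinates $E_\alpha$, so $\rep(Q,N)$ is a linear subspace of the affine space of representations with dimension vector $\pmb{d}$; hence it is itself an affine space, which is irreducible.

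For (b), the idea is to combine (a) with finite representation type. Since $Q$ is of $A_n$ type, there are only finitely many isomorphism classes of representations with dimension vector $\pmb{d}$, and each corresponds to a locally closed $\pmb{\operatorname{GL}}_{\pmb{d}}(\mathbb{K})$-orbit in $\rep(Q, \pmb{d})$ (orbits of algebraic group actions are locally closed). Intersecting these finitely many orbits with $\rep(Q,N)$ partitions it into finitely many locally closed subsets $R_1, \ldots, R_k$. Because $\rep(Q,N)$ is irreducible and $\rep(Q,N) = \bigcup_i \overline{R_i}$, exactly one closure, say $\overline{R_{i_0}}$, equals all of $\rep(Q,N)$, while every other $\overline{R_i}$ is a proper closed subset. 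Then
\[ \Omega_N := \rep(Q,N) \setminus \bigcup_{i \neq i_0}\overline{R_i} \]
is open and dense in $\rep(Q,N)$, and by construction is contained in the single orbit $R_{i_0}$; so any two representations in $\Omega_N$ are isomorphic.

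For (c), suppose $\JF(N) = \JF(N')$. Then for each $q \in Q_0$ we can pick $g_q \in \operatorname{GL}(V_q)$ conjugating $N_q$ to $N'_q$, i.e.\ $g_q N_q g_q^{-1} = N'_q$. The map $\phi:\rep(Q,N) \longrightarrow \rep(Q,N')$ defined by $\phi(E)_\alpha = g_{t(\alpha)} E_\alpha g_{s(\alpha)}^{-1}$ is readily checked to land in $\rep(Q,N')$ (the commutation relation is transported by $g$) and is an isomorphism of affine varieties. Moreover, $\phi(E)$ is isomorphic to $E$ as a representation of $Q$, via $g = (g_q)_{q \in Q_0}$. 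Therefore $\phi(\Omega_N)$ is a dense open subset of $\rep(Q,N')$ all of whose representations are isomorphic to the common class in $\Omega_N$. By uniqueness of the generic orbit established in (b), $\phi(\Omega_N)$ and $\Omega_{N'}$ must meet, and hence their representations lie in the same isomorphism class. The bulk of the work is really part (b); the remaining steps reduce either to a direct linear-algebra check or to standard facts about orbits in $\rep(Q,\pmb{d})$, so I do not expect serious obstacles there.
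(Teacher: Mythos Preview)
Your proof is correct. The paper does not give a detailed proof of this proposition; it only records that the argument in \cite{GPT19} ``is mainly based on Kac's theorem [\ldots] and the fact that the indecomposable representations of a Dynkin quiver are characterized by their dimension vectors.'' Your argument reaches the same conclusion by a more elementary route: instead of invoking Kac's canonical decomposition to produce a generic isomorphism type in $\rep(Q,N)$, you use directly that an $A_n$ quiver has only finitely many isomorphism classes in each dimension vector, so $\rep(Q,N)$ is stratified by finitely many locally closed pieces and irreducibility forces exactly one of them to be dense. This is cleaner and entirely self-contained for the $A_n$ case at hand, at the cost of not extending beyond representation-finite type (whereas the Kac-based argument in \cite{GPT19} covers arbitrary quivers). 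Your treatment of (a) via linearity of the commutation constraints and of (c) via simultaneous conjugation is standard and matches what one would expect in either approach.
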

	
	The proof of this result is mainly based on Kac's theorem \cite[p.85]{Kac80} and the fact that the indecomposable representations of a Dynkin quiver are characterized by their dimension vectors.
	
	Therefore, for any $n$-tuple of integer partitions $\pmb{\lambda}$, we denote by $\GenRep(\pmb{\lambda})$ the representation $\GenRep(N)$ for some $n$-tuple of nilpotent endomorphisms $N = (N_q : \mathbb{K}^{|\lambda_q|} \longrightarrow \mathbb{K}^{|\lambda_q|})_{q\in Q_0}$ whose Jordan form data are parametrized by $\pmb{\lambda}$. We call it the \new{generic representation with Jordan form $\pmb{\lambda}$}.
	
	In the following, we present a slightly different way to define the generic representation of $Q$ with Jordan form $\pmb{\lambda}$
	
	Let $\pmb{\lambda} =(\lambda^q)_{q \in Q_0}$ be a $n$-tuple of integer partitions. Denote by $\rep(Q,\pmb{\lambda})$ the set of representations $Y = ((Y_q)_{q \in Q_0}, (Y_\alpha)_{\alpha \in Q_1})$ such that $\vdim(Y) = (|\lambda^q|)_{q \in Q_0}$, and for which there exists a nilpotent endomorphism $N \in \NEnd(Y)$ with $\JF(N) = \pmb{\lambda}$.
	
	\begin{prop}
		\label{prop:secondGenRep}
		Let $Q$ be an $A_n$ type quiver, and $\pmb{\lambda} = (\lambda^q)_{q \in Q_0}$ be a $n$-tuple of integer partitions. Then $\rep(Q,\pmb{\lambda})$ is an irreducible space. Moreover, there exists a dense open set $\mho \subset \rep(Q,\pmb{\lambda})$ such that all the representations in $\mho$ are isomorphic.
	\end{prop}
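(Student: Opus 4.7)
The plan is to realize $\rep(Q,\pmb{\lambda})$ as the image of an incidence variety and transfer both irreducibility and the generic-isomorphism-class statement of \cref{prop:firstGenRep} down to $\rep(Q,\pmb{\lambda})$. Setting $\pmb{d} = (|\lambda^q|)_{q \in Q_0}$, let $\mathcal{N}_{\pmb{\lambda}} \subseteq \prod_{q \in Q_0} \operatorname{Mat}_{d_q}(\mathbb{K})$ denote the space of tuples $N = (N_q)_q$ of nilpotent matrices with $\JF(N_q) = \lambda^q$. Each factor is a single $\operatorname{GL}_{d_q}(\mathbb{K})$-conjugacy class, hence irreducible, so $\mathcal{N}_{\pmb{\lambda}}$ is irreducible. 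Define the incidence variety
\[
Z = \bigl\{(Y, N) \in \rep(Q, \pmb{d}) \times \mathcal{N}_{\pmb{\lambda}} \;\big|\; N \in \NEnd(Y) \bigr\},
\]
whose first projection satisfies $\pi_1(Z) = \rep(Q, \pmb{\lambda})$ by construction.

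For irreducibility, I would examine the second projection $\pi_2 : Z \to \mathcal{N}_{\pmb{\lambda}}$: its fiber over $N$ is precisely the set $\rep(Q, N)$ from \cref{prop:firstGenRep}, an irreducible affine space cut out by the linear commutation conditions $N_{t(\alpha)} Y_\alpha = Y_\alpha N_{s(\alpha)}$. The $\pmb{\operatorname{GL}}_{\pmb{d}}(\mathbb{K})$-action by simultaneous conjugation acts transitively on $\mathcal{N}_{\pmb{\lambda}}$ and sends $\rep(Q, N)$ linearly to $\rep(Q, gNg^{-1})$, so all fibers are equidimensional. A standard fibration argument (irreducible base with irreducible equidimensional fibers) then yields that $Z$ is irreducible, and therefore so is $\rep(Q, \pmb{\lambda}) = \pi_1(Z)$ as a constructible subset of $\rep(Q, \pmb{d})$.

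For the second claim, let $\mathcal{O} \subseteq \rep(Q, \pmb{d})$ be the $\pmb{\operatorname{GL}}_{\pmb{d}}(\mathbb{K})$-orbit of the representation $\GenRep(\pmb{\lambda})$ defined after \cref{prop:firstGenRep}. It is locally closed by the general theory of algebraic group actions, and lies inside $\rep(Q, \pmb{\lambda})$ since admitting a nilpotent endomorphism of fixed Jordan form is an isomorphism invariant. For every $N \in \mathcal{N}_{\pmb{\lambda}}$, the intersection $\mathcal{O} \cap \rep(Q, N)$ contains the dense open subset $\Omega_N$ provided by \cref{prop:firstGenRep}; the crucial input here is the second assertion of that proposition, which guarantees that a single representation $\GenRep(\pmb{\lambda})$ serves as the generic isomorphism class for every $N$ with $\JF(N) = \pmb{\lambda}$. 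Consequently $\overline{\mathcal{O}}$ contains $\rep(Q, N)$ for all such $N$, hence the whole union $\rep(Q, \pmb{\lambda})$, which forces $\overline{\mathcal{O}} = \overline{\rep(Q, \pmb{\lambda})}$; thus $\mathcal{O}$ is open in $\overline{\rep(Q, \pmb{\lambda})}$, and $\mho := \mathcal{O}$ is the desired dense open subset. I expect the main technical obstacle to be bookkeeping around the fact that $\rep(Q, \pmb{\lambda})$ is a priori only constructible in $\rep(Q, \pmb{d})$, so each density/openness statement must be phrased carefully either in the ambient affine variety or in the subspace topology on $\rep(Q, \pmb{\lambda})$.
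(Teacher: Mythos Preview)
Your argument is correct and rests on the same two ingredients as the paper's: the transitive $\pmb{\operatorname{GL}}_{\pmb{d}}(\mathbb{K})$-action on tuples of nilpotents with fixed Jordan type, together with \cref{prop:firstGenRep}. The paper simply bypasses the incidence variety: fixing one $N$, the action map $\Phi_N : \pmb{\operatorname{GL}}_{\pmb{d}}(\mathbb{K}) \times \rep(Q,N) \to \rep(Q,\pmb{d})$, $(\phi,E)\mapsto \phi\cdot E$, already surjects onto $\rep(Q,\pmb{\lambda})$, giving irreducibility at once; then $\mho := \Phi_N(\pmb{\operatorname{GL}}_{\pmb{d}}(\mathbb{K}) \times \Omega_N)$, which is exactly your orbit $\mathcal{O}$, is dense and open because the image of an open set under a morphism is open in the closure of the full image.

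One small point worth tightening: your ``standard fibration argument'' is not self-contained as phrased, since an irreducible base with irreducible equidimensional fibers does not in general force the total space to be irreducible. The honest justification is the surjection $\pmb{\operatorname{GL}}_{\pmb{d}}(\mathbb{K}) \times \rep(Q,N_0) \twoheadrightarrow Z$, $(g,Y)\mapsto (g\cdot Y,\, gN_0g^{-1})$, coming from the transitive action you already invoke --- and once you write that map down, composing with $\pi_1$ gives exactly $\Phi_{N_0}$, so the detour through $Z$ becomes unnecessary.
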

	
	\begin{proof} 
		Set $\pmb{d} = (|\lambda^q|)_{q \in Q_0}$. For any $E \in \rep(Q,\pmb{\lambda})$ and $G \in \pmb{\operatorname{GL}}_{\pmb{d}}(\mathbb{K})$, we have $G \cdot E \in \rep(Q,\pmb{\lambda})$. Moreover, the action of $ \pmb{\operatorname{GL}}_{\pmb{d}}(\mathbb{K})$ is transitive on nilpotent endomorphisms of the same Jordan form. Thus, for any collection $N = (N_q)_{q \in Q_0}$ of nilpotent endomorphisms whose Jordan forms are given by $\pmb{\lambda}$, $\rep(Q,\pmb{\lambda}) = \pmb{\operatorname{GL}}_{\pmb{d}}(\mathbb{K}) \cdot \rep(Q,N)$. In the following, we fix such a collection of nilpotent endomorphisms $N = (N_q)_{q \in Q_0}$.
		
		We set \[\Phi_N : \left\{ \begin{matrix}
			\pmb{\operatorname{GL}}_{\pmb{d}}(\mathbb{K}) \times \rep(Q,N) &  \longrightarrow & \rep(Q,\pmb{d}) \\
			(\phi, E) & \longmapsto & \phi \cdot E
		\end{matrix}
		\right.. \] Note that this is an algebraic morphism. The space  $\pmb{\operatorname{GL}}_{\pmb{d}}(\mathbb{K}) \times \rep(Q,N)$ is irreducible as a product of irreducible spaces. So $\rep(Q,\pmb{\lambda}) = \Phi_N(\pmb{\operatorname{GL}}_{\pmb{d}}(\mathbb{K}) \times \rep(Q,N)) $ is an irreducible space.
		
		By \cref{prop:firstGenRep}, there exists a dense open set $\Omega_N \subset \rep(Q,N)$ within which the representations are all isomorphic. Set $\mho = \Phi_N (\pmb{\operatorname{GL}}_{\pmb{d}}(\mathbb{K}) \times \Omega_N)$. It is clear that $\mho$ is dense in $\rep(Q,\pmb{\lambda})$. Moreover, as the image of an open set by an algebraic morphism is open in the closure of its image \cite[Section I.8, Corollary 2]{M06}, we have that $\mho$ is open in $\rep(Q,\pmb{\lambda})$.
	\end{proof}
	
	The previous result allows us to construct $\GenRep(\pmb{\lambda})$ by exhibiting a generic behavior among the representations in $\rep(Q, \pmb{\lambda})$. 
	
	\begin{remark}
		For any Dynkin-type quiver $Q$, the two algebraic ways presented to define the generic representation with a fixed Jordan form coincide. Propositions \ref{prop:firstGenRep} and  \ref{prop:secondGenRep} hold in this setting.
	\end{remark}

	\subsection{Canonical Jordan recoverability}
	\label{ss:CJR}
	
	Consider $\mathscr{C}$ a Jordan recoverable subcategory of $\rep(Q)$. One can legitimately ask if $\GenRep$ gives an inverse of $\GenJF$. However, \cref{A2ex} highlights an example of a Jordan recoverable category for which it does not work. Therefore, we need to refine the notion of Jordan recoverability.
	\begin{definition} \label{CJRdef}
		A subcategory $\mathscr{C}$ of $\rep(Q)$ is said to be \new{canonically Jordan recoverable} if, for any  $X \in \mathscr{C}$, $\GenRep(\GenJF(X)) \cong X$. 
	\end{definition}
	We have an inverse of $\GK$ in such a category. Then, obviously, any canonically Jordan recoverable category is Jordan recoverable. However, there are Jordan recoverable categories that are not canonically Jordan recoverable.
	\begin{ex} \label{ex:CJR} Let $Q$ be the $A_3$ type quiver of \cref{ex:JR}.
		\begin{enumerate}[label = $\bullet$]
			\item The category $\mathscr{C}_1$ is not canonically Jordan recoverable by following the explanations already given in \cref{A2ex}.
			
			\item The category $\mathscr{C}_2$ is canonically Jordan recoverable. Let $\pmb{\lambda} = ((a+b),(b),(b+c))$ for a fixed triplet $(a,b,c) \in \mathbb{N}^3$. Consider $Y \in \rep(Q,\pmb{\lambda})$ and $N = (N_1,N_2,N_3) \in \NEnd(Y)$ such that $\JF(N) = \pmb{\lambda}$. There exists $u_1 \in Y_1$ such that $N_1^{a+b-1}(u_1) \neq 0$. Thus, by writing $u_i = N_1^{i-1}(u_1)$ for $i \in \{1,\ldots,a+b\}$, we get that $(u_1, \ldots, u_{a+b})$ is a basis of $Y_1$, adapted to $N$. Similarly, we construct the bases $(v_1, \ldots, v_b)$ and $(w_1, \ldots, w_{b+c})$ of respectively $Y_2$ and $Y_3$ such that they are adapted to $N$ (see \cref{fig:nilpbases}). We now have to describe $Y_\alpha$ and $Y_\beta$.
			\begin{figure}[ht!]
				\centering
				\begin{tikzpicture}[->,line width=0.7mm,>= angle 60,color=black,scale=1]
					\node (Q) at (-.9,0){$Y=$};
					\node (1) at (0,0){$\mathbb{K}^{a+b}$};
					\node (2) at (3,0){$\mathbb{K}^b$};
					\node (3) at (6,0){$\mathbb{K}^{b+c}$};
					\draw (1) -- node[above]{$Y_\alpha$} (2);
					\draw (3) -- node[above]{$Y_\beta$} (2);
					\begin{scope}[yshift=-1cm, ->,line width=0.2mm,>= angle 60,scale=1]
						\node[black] (a) at (0,0){$u_1$};
						\node[black] (b) at (0,-1){$u_2$};
						\node[black] (c) at (0,-2){$u_{b-1}$};
						\node[black] (d) at (0,-3){$u_b$};
						\node[black] (e) at (0,-4){$u_{b+1}$};
						\node[black] (f) at (0,-5){$u_{a+b}$};
						\node[black] (g) at (0,-6){$0$};
						\draw (a) -- node[left]{$N_1$}  (b);
						\draw[dotted] (b) --  (c);
						\draw (c) -- node[left]{$N_1$}  (d);
						\draw (d) --  node[left]{$N_1$}  (e);
						\draw[dotted](e) -- (f);
						\draw (f) -- node[left]{$N_1$} (g);
						
						\node[black] (a1) at (3,0){$v_1$};
						\node[black] (b1) at (3,-1){$v_2$};
						\node[black] (c1) at (3,-2){$v_{b-1}$};
						\node[black] (d1) at (3,-3){$v_b$};
						\node[black] (e1) at (3,-4){$0$};
						\draw (a1) -- node[left]{$N_2$}  (b1);
						\draw[dotted] (b1) --  (c1);
						\draw (c1) -- node[left]{$N_2$}  (d1);
						\draw (d1) --  node[left]{$N_2$}  (e1);
						
						\node[black] (a2) at (6,0){$w_1$};
						\node[black] (b2) at (6,-1){$w_2$};
						\node[black] (c2) at (6,-2){$w_{b-1}$};
						\node[black] (d2) at (6,-3){$w_b$};
						\node[black] (e2) at (6,-4){$w_{b+1}$};
						\node[black] (f2) at (6,-6){$w_{b+c}$};
						\node[black] (g2) at (6,-7){$0$};
						\draw (a2) -- node[left]{$N_3$}  (b2);
						\draw[dotted] (b2) --  (c2);
						\draw (c2) -- node[left]{$N_3$}  (d2);
						\draw (d2) --  node[left]{$N_3$}  (e2);
						\draw[dotted](e2) -- (f2);
						\draw (f2) -- node[left]{$N_3$} (g2);
						
						\draw [dashed,line width=0.2pt]
						($ (a1.north west)+ (-0.6,0) $)
						rectangle
						($ (d1.south east) + (0.6, -0.3) $);
						
						\draw [dashed,line width=0.2pt]
						($ (b1.north west)+ (-0.4,0) $)
						rectangle
						($ (d1.south east) + (0.4,-0.2) $);
						
						\draw [dashed,line width=0.2pt]
						($ (c1.north west)+ (-0.05,0) $)
						rectangle
						($ (d1.south east) + (0.2,-0.1) $);
						
						\draw [dashed,line width=0.2pt]
						($ (d1.north west) $)
						rectangle
						($ (d1.south east) $);
						
						\draw [line width=0.7mm] (a) -- ($ (a1.west)+ (-0.6,0) $);
						\draw [line width=0.7mm] (b) -- ($ (b1.west)+ (-0.4,0) $);
						\draw [line width=0.7mm] (c) -- ($ (c1.west)+ (-0.05,0) $);
						\draw [line width=0.7mm] (d) -- ($ (d1.west) $);
						
						\draw [line width=0.7mm] (a2) -- ($ (a1.east)+ (0.6,0) $);
						\draw [line width=0.7mm] (b2) -- ($ (b1.east)+ (0.4,0) $);
						\draw [line width=0.7mm] (c2) -- ($ (c1.east)+ (-0.05,0) $);
						\draw [line width=0.7mm] (d2) -- ($ (d1.east) $);
					\end{scope}
				\end{tikzpicture}
				\caption{\label{fig:nilpbases} Illustration of the configuration described to study the canonical Jordan recoverability of $\mathscr{C}_2$. Note that $Y_\alpha(u_i) = 0$ and $Y_\beta(w_i)=0$ for $i > b$ by square commutativity relations.}
			\end{figure}
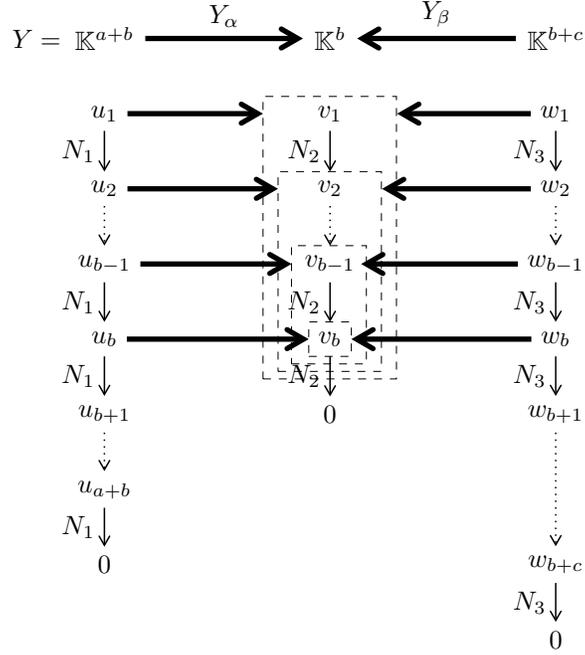
			Thanks to the chosen bases, and square commutativity relations satisfied by $N$, we only have to describe the image of $u_1$ by $Y_\alpha$ to describe all $Y_\alpha$, and the same goes for $Y_\beta$. Then $Y_\alpha(u_1)$ has to be a $\mathbb{K}$-linear combination of $(v_1, \ldots, v_b)$, say $(\alpha) : Y_\alpha(u_1) = k_1 v_1 + \ldots + k_b v_b$ with $k_1, \ldots, k_b \in \mathbb{K}$. Among all the choices we could make, there exists a dense open set $\Omega_1 \subset \rep(Q, \pmb{\lambda})$ such that for $Y \in \Omega_1$, $Y_\alpha(u_1) \notin \IIm(N_2)$ (this dense open set can be seen as taking $k_1 \neq 0$ in $(\alpha)$). Analogously, there exists a dense open set $\Omega_2$ such that $Y_\beta(w_1) \notin \IIm(N_2)$. Therefore, for all $Y$ in the dense open set $\Omega_1 \cap \Omega_2$, we get that $Y \cong X_{\llrr{1}}^a \oplus X_{\llrr{1,3}}^b \oplus X_{\llrr{3}}^c$. This proves our claim. \qedhere
		\end{enumerate}
	\end{ex}
	
	This paper describes all the canonically Jordan recoverable subcategories of $\rep(Q)$ for any $A_n$ type quiver.
	
	\section{Storability}
	\label{s:store} 
	
	In this section, we introduce a relation among integer partitions, which we call \emph{storability}, that corresponds to certain interlacing behaviors. We introduce diagonal transformations on $n$-tuples of integer partitions, which underlie the behaviors of the generic Jordan form data under the action of the reflection functors. Later on, in \cref{s:operationsCJR}, we will use it to define operators on additive subcategories that preserve Jordan recoverability and canonical Jordan recoverability, under certain assumptions. These are relevant tools to prove \cref{maintheorem}.
	
	\subsection{Storable pairs}
	\label{ss:store pairs} 
	
	\begin{definition}\label{def:storable pairs}
		Let $\lambda$ and $\mu$ be two integer partitions. The pair $(\lambda, \mu)$ is  \new{storable} if for all $i \in \mathbb{N}^*$, $ \lambda_i \geqslant \mu_i \geqslant \lambda_{i+1}$ (adding zero parts as needed). Such a pair is \new{strongly storable} if in addition $\lambda_1 = \mu_1$. 
	\end{definition}
	We represent and characterize storable pairs visually. Fill two rows of $45^\circ$ rotated boxes with the entries of $\lambda$ and $\mu$ as in \cref{fig:store}, adding infinitely many zeros to the right. Then $(\lambda, \mu)$ is storable if the entries weakly decrease from left to right.
	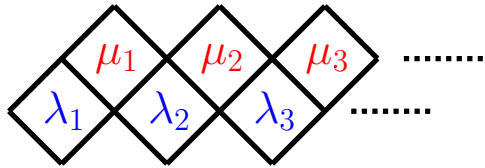
\begin{figure}[ht!]
		\centering
		\begin{tikzpicture}[scale=0.7]
			\draw[line width=0.7mm](-1,1) edge (1,3);
			\draw[line width=0.7mm](0,0) edge (3,3);
			\draw[line width=0.7mm](2,0) edge (5,3);
			\draw[line width=0.7mm](4,0) edge (6,2);
			
			\draw[line width=0.7mm](-1,1) edge (0,0);
			\draw[line width=0.7mm](0,2) edge (2,0);
			\draw[line width=0.7mm](1,3) edge (4,0);
			\draw[line width=0.7mm](3,3) edge (5,1);
			\draw[line width=0.7mm](5,3) edge (6,2);
			
			\draw[line width=0.7mm,dotted](6.5,2) edge (8,2);
			\draw[line width=0.7mm,dotted](5.5,1) edge (7,1);
			
			\tkzLabelPoint[below](0.05,1.55){{\huge $\color{blue}{\lambda_1}$}}
			\tkzLabelPoint[below](2.05,1.55){{\huge $\color{blue}{\lambda_2}$}}
			\tkzLabelPoint[below](4.05,1.55){{\huge $\color{blue}{\lambda_3}$}}
			
			\tkzLabelPoint[below](1.05,2.45){{\huge $\color{red}{\mu_1}$}}
			\tkzLabelPoint[below](3.05,2.45){{\huge $\color{red}{\mu_2}$}}
			\tkzLabelPoint[below](5.05,2.45){{\huge $\color{red}{\mu_3}$}}
		\end{tikzpicture}
		\caption{\label{fig:store} Illustration of storability of $(\lambda, \mu)$.}
	\end{figure}
	We give two results that arise from the definition.
	\begin{lemma} \label{lem:elementarystorable} Let $\lambda$ and $\mu$ be two integer partitions. 
		\begin{enumerate}[label = \arabic*)]
			\item \label{estor1} If $(\lambda, \mu)$ and $(\mu, \lambda)$ are both storable, then $\lambda = \mu$;
			\item \label{estor2} If $(\lambda, \mu)$ is storable, then $\ell(\lambda) \in \{\ell(\mu), \ell(\mu) + 1\}$.
		\end{enumerate}
	\end{lemma}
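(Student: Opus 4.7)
The plan is to read off both assertions directly from the defining chain of inequalities, so essentially no machinery is needed beyond unpacking \cref{def:storable pairs}. I would first note that storability of a pair $(\alpha,\beta)$ is equivalent to the interlacing $\alpha_i \geqslant \beta_i \geqslant \alpha_{i+1}$ for every $i \geqslant 1$, with zero parts added as required, and that this convention handles both trailing zeros and the case of the empty partition uniformly.

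For part \ref{estor1}, I would simply observe that storability of $(\lambda,\mu)$ yields $\lambda_i \geqslant \mu_i$ for every $i$, while storability of $(\mu,\lambda)$ yields the reverse inequality $\mu_i \geqslant \lambda_i$. Applying both at each $i$ forces $\lambda_i = \mu_i$, hence $\lambda = \mu$.

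For part \ref{estor2}, set $k = \ell(\mu)$, so $\mu_k > 0$ and $\mu_{k+1} = 0$. The lower bound is obtained from $\lambda_i \geqslant \mu_i$ at $i = k$: since $\lambda_k \geqslant \mu_k > 0$, we get $\ell(\lambda) \geqslant k = \ell(\mu)$. The upper bound follows from $\mu_i \geqslant \lambda_{i+1}$ at $i = k+1$: since $\mu_{k+1} = 0$, we obtain $\lambda_{k+2} = 0$, so $\ell(\lambda) \leqslant k+1 = \ell(\mu)+1$. The degenerate cases when $\lambda$ or $\mu$ is empty are absorbed by the same argument via the zero-padding convention, so there is no genuine obstacle and the whole proof should fit in a few lines.
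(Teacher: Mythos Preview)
Your argument is correct; both parts are immediate from the interlacing inequalities in \cref{def:storable pairs}, exactly as you read them off. The paper itself states this lemma without proof, treating it as an elementary consequence of the definition, so there is nothing further to compare against.
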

	
	\subsection{Storable triplets}
	\label{ss:storetriplet}
	
	\begin{definition} \label{def:storabletriplets} Let $\lambda, \mu$ and $\nu$ be three integer partitions. The triplet $(\lambda, \mu, \nu)$ is \new{storable} if the two following conditions are satisfied: \begin{enumerate}[label = $\bullet$]
			\item either $(\lambda, \mu)$ or $(\mu, \lambda)$ is a storable pair;
			\item either $(\mu, \nu)$ or $(\nu, \mu)$ is a storable pair.
		\end{enumerate}
		More precisely, we say that $(\lambda, \mu, \nu)$ is:
		\begin{enumerate}
			\item[$(\boxplus \boxplus)$]  $(\boxplus,\boxplus)$-storable if $(\lambda, \mu)$ and $(\nu, \mu)$ are storable pairs;
			\item[$(\boxplus \boxminus)$]  $(\boxplus,\boxminus)$-storable if $(\lambda, \mu)$ and $(\mu, \nu)$ are storable pairs;
			\item[$(\boxminus \boxplus)$]  $(\boxminus,\boxplus)$-storable if $(\mu, \lambda)$ and $(\nu, \mu)$ are storable pairs;
			\item[$(\boxminus \boxminus)$] $(\boxminus,\boxminus)$-storable if $(\mu, \lambda)$ and $(\mu, \nu)$ are storable pairs.
		\end{enumerate}
		Such a triplet is \new{strongly storable} whenever $\lambda_1 = \mu_1$ or $\mu_1 = \nu_1$. 
	\end{definition}
	In \cref{fig:AllConf}, we illustrate the four storability configurations.
	\begin{figure}[ht!]
		\centering
		
		\scalebox{0.53}{
			\begin{tikzpicture}[scale=0.7]
				\draw[line width=0.7mm](0,2) -- (1,3) -- (4,0) ;
				\draw[line width=0.7mm](0,2) -- (3,-1) -- (5,1) ;
				\draw[line width=0.7mm](0,0) -- (1,-1) -- (4,2) ;
				\draw[line width=0.7mm](0,0) -- (3,3) -- (5,1);
				
				\draw[line width=0.7mm,dotted](4.5,2) edge (6,2);
				\draw[line width=0.7mm,dotted](5.5,1) edge (7,1);
				\draw[line width=0.7mm,dotted](4.5,0) edge (6,0);
				
				\tkzLabelPoint[below](2.05,1.45){{\huge $\color{red}{\mu_1}$}}
				\tkzLabelPoint[below](4.05,1.45){{\huge $\color{red}{\mu_2}$}}

				\tkzLabelPoint[below](1.05,2.55){{\huge $\color{blue}{\lambda_1}$}}
				\tkzLabelPoint[below](3.05,2.55){{\huge $\color{blue}{\lambda_2}$}}
				
				\tkzLabelPoint[below](1.05,0.4){{\huge $\color{darkgreen}{\nu_1}$}}
				\tkzLabelPoint[below](3.05,0.4){{\huge $\color{darkgreen}{\nu_2}$}}
				
				\tkzLabelPoint[below](3.05,-1.5){{\huge $(\boxplus \boxplus)$}}
			\end{tikzpicture} \qquad 
			\begin{tikzpicture}[scale=0.7]
				\draw[line width=0.7mm](-1,1) -- (1,3) -- (4,0) ;
				\draw[line width=0.7mm](-2,2) -- (-1,3) -- (3,-1) -- (4,0) ;
				\draw[line width=0.7mm](-2,2) -- (1,-1) -- (3,1) ;
				\draw[line width=0.7mm](0,0) -- (2,2);
				
				\draw[line width=0.7mm,dotted](2.5,2) edge (4,2);
				\draw[line width=0.7mm,dotted](3.5,1) edge (5,1);
				\draw[line width=0.7mm,dotted](4.5,0) edge (6,0);
				
				\tkzLabelPoint[below](0.05,1.45){{\huge $\color{red}{\mu_1}$}}
				\tkzLabelPoint[below](2.05,1.45){{\huge $\color{red}{\mu_2}$}}

				\tkzLabelPoint[below](-.95,2.55){{\huge $\color{blue}{\lambda_1}$}}
				\tkzLabelPoint[below](1.05,2.55){{\huge $\color{blue}{\lambda_2}$}}
				
				\tkzLabelPoint[below](1.05,0.4){{\huge $\color{darkgreen}{\nu_1}$}}
				\tkzLabelPoint[below](3.05,0.4){{\huge $\color{darkgreen}{\nu_2}$}}
				
				\tkzLabelPoint[below](1.05,-1.5){{\huge $(\boxplus \boxminus)$}}
			\end{tikzpicture} \qquad \begin{tikzpicture}[scale=0.7]
				\draw[line width=0.7mm](-2,0) -- (1,3) -- (3,1) ;
				\draw[line width=0.7mm](0,2) -- (2,0);
				\draw[line width=0.7mm](-1,1) -- (1,-1) -- (4,2) ;
				\draw[line width=0.7mm](-2,0) --(-1,-1) -- (3,3) -- (4,2);
				
				\draw[line width=0.7mm,dotted](4.5,2) edge (6,2);
				\draw[line width=0.7mm,dotted](3.5,1) edge (5,1);
				\draw[line width=0.7mm,dotted](2.5,0) edge (4,0);
				
				\tkzLabelPoint[below](0.05,1.45){{\huge $\color{red}{\mu_1}$}}
				\tkzLabelPoint[below](2.05,1.45){{\huge $\color{red}{\mu_2}$}}

				\tkzLabelPoint[below](1.05,2.55){{\huge $\color{blue}{\lambda_1}$}}
				\tkzLabelPoint[below](3.05,2.55){{\huge $\color{blue}{\lambda_2}$}}
				
				\tkzLabelPoint[below](-0.95,0.4){{\huge $\color{darkgreen}{\nu_1}$}}
				\tkzLabelPoint[below](1.05,0.4){{\huge $\color{darkgreen}{\nu_2}$}}
				
				\tkzLabelPoint[below](1.05,-1.5){{\huge $(\boxminus \boxplus)$}}
			\end{tikzpicture}  \qquad  \begin{tikzpicture}[scale=0.7]
				\draw[line width=0.7mm](-1,1) -- (1,3) -- (4,0) ;
				\draw[line width=0.7mm](0,2) -- (3,-1) -- (4,0) ;
				\draw[line width=0.7mm](-1,1) -- (1,-1) -- (4,2) ;
				\draw[line width=0.7mm](0,0) -- (3,3) -- (4,2);
				
				\draw[line width=0.7mm,dotted](4.5,2) edge (6,2);
				\draw[line width=0.7mm,dotted](3.5,1) edge (5,1);
				\draw[line width=0.7mm,dotted](4.5,0) edge (6,0);
				
				\tkzLabelPoint[below](0.05,1.45){{\huge $\color{red}{\mu_1}$}}
				\tkzLabelPoint[below](2.05,1.45){{\huge $\color{red}{\mu_2}$}}

				\tkzLabelPoint[below](1.05,2.55){{\huge $\color{blue}{\lambda_1}$}}
				\tkzLabelPoint[below](3.05,2.55){{\huge $\color{blue}{\lambda_2}$}}
				
				\tkzLabelPoint[below](1.05,0.4){{\huge $\color{darkgreen}{\nu_1}$}}
				\tkzLabelPoint[below](3.05,0.4){{\huge $\color{darkgreen}{\nu_2}$}}
				
				\tkzLabelPoint[below](2.05,-1.5){{\huge $(\boxminus \boxminus)$}}
		\end{tikzpicture}}

		\caption{\label{fig:AllConf} Illustration of the four storability configurations of $(\lambda, \mu, \nu)$.}
	\end{figure}
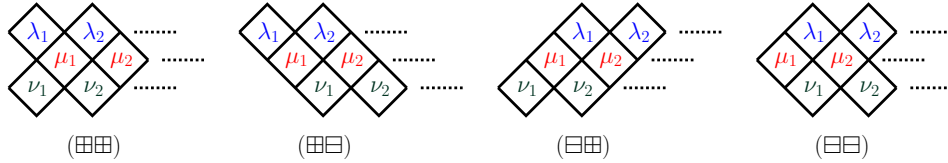
	\begin{definition}\label{def:diag}
		Let $\lambda, \mu$, and $\nu$ be three integer partitions. Assume that $(\lambda, \mu, \nu)$ is a storable triplet. We define the \new{diagonal transformation of $\mu$ in $(\lambda, \mu, \nu)$}, denoted $\diag(\lambda, \mu, \nu)$, to be the integer partition $\theta = (\theta_1, \theta_2, \ldots)$ such that:
		\begin{enumerate}[label = $\bullet$]
			
			\item if $(\lambda, \mu, \nu)$ is $(\boxplus, \boxplus)$-storable, then we define, for all $i \geqslant 1$, $$\theta_i =  \begin{cases}
				\max(\lambda_1, \nu_1) & \text{if } i = 1 \\
				\min(\lambda_{i-1},\nu_{i-1}) + \max(\lambda_i, \nu_i) - \mu_{i-1} & \text{otherwise;}\\
			\end{cases} $$
			
			\item if $(\lambda, \mu, \nu)$ is $(\boxplus, \boxminus)$-storable, then we define, for all $i \geqslant 1$, $$ \theta_i = \begin{cases} \lambda_1 + \max(\lambda_2, \nu_1) - \mu_1 & \text{if } i = 1\\
				\min(\lambda_i,\nu_{i-1}) + \max(\lambda_{i+1}, \nu_i) - \mu_{i}& \text{otherwise;}
			\end{cases}$$
			
			\item if $(\lambda, \mu, \nu)$ is $(\boxminus, \boxplus)$-storable, then we define, for all $i \geqslant 1$, $$ \theta_i = \begin{cases} \nu_1 + \max(\lambda_1, \nu_2) - \mu_1 & \text{if } i = 1\\
				\min(\lambda_{i-1},\nu_{i}) + \max(\lambda_{i}, \nu_{i+1}) - \mu_{i}& \text{otherwise;}
			\end{cases}$$
			
			\item if $(\lambda, \mu, \nu)$ is  $(\boxminus, \boxminus)$-storable, then we define, for all $i \geqslant 1$, $$\theta_i =  \min(\lambda_i, \nu_i) + \max(\lambda_{i+1}, \nu_{i+1}) - \mu_{i+1}.$$
		\end{enumerate}
	\end{definition}
	We can picture the diagonal operation as doing local operations for each square of $\mu$ in the diagram representing the storable triple $(\lambda, \mu, \nu)$ (\cref{fig:GenDiagop}).
	\begin{figure}[ht!]
		\centering
		
		\scalebox{0.6}{
			\begin{tikzpicture}[scale=1.3]
				
				\tkzDefPoint(1,1){a}
				\tkzDefPoint(2,2){b}
				\tkzDefPoint(3,1){c}
				\tkzDefPoint(2,0){d}
				\tkzDrawPolygon[line width = 2mm, color = black, fill = black!10](a,b,c,d);
				
				\draw[line width=0.7mm](0,2) -- (1,3) -- (4,0) ;
				\draw[line width=0.7mm](0,2) -- (3,-1) -- (4,0) ;
				\draw[line width=0.7mm](0,0) -- (1,-1) -- (4,2) ;
				\draw[line width=0.7mm](0,0) -- (3,3) -- (4,2);
				
				\draw[line width=0.7mm,dotted](4.5,2) edge (5.5,2);
				\draw[line width=0.7mm,dotted](3.5,1) edge (4.5,1);
				\draw[line width=0.7mm,dotted](4.5,0) edge (5.5,0);
				
				\draw[line width=0.7mm,dotted](-0.5,2) edge node[above]{\Large $\color{blue}{\lambda}$} (-1.5,2);
				\draw[line width=0.7mm,dotted](0.5,1) edge node[above]{\Large $\color{red}{\mu}$}(-0.5,1);
				\draw[line width=0.7mm,dotted](-0.5,0) edge node[above]{\Large $\color{darkgreen}{\nu}$} (-1.5,0);
				
				\draw[->,>= angle 60, line width=2mm](5,1) -- node[above]{\huge $\diag$} (7,1);
				
				\tkzLabelPoint[below](2,1.27){{\Huge $\color{red}{e}$}}

				\tkzLabelPoint[below](1,2.3){{\Huge $\color{blue}{a}$}}
				\tkzLabelPoint[below](3,2.4){{\Huge $\color{blue}{b}$}}
				
				\tkzLabelPoint[below](1,0.27){{\Huge $\color{darkgreen}{c}$}}
				\tkzLabelPoint[below](3,0.4){{\Huge $\color{darkgreen}{d}$}}
				
				\begin{scope}[xshift = 8cm]
					
					\tkzDefPoint(1,1){a}
					\tkzDefPoint(2,2){b}
					\tkzDefPoint(3,1){c}
					\tkzDefPoint(2,0){d}
					\tkzDrawPolygon[line width = 2mm, color = black, fill = black!10](a,b,c,d);
					
					\draw[line width=0.7mm](0,2) -- (1,3) -- (4,0) ;
					\draw[line width=0.7mm](0,2) -- (3,-1) -- (4,0) ;
					\draw[line width=0.7mm](0,0) -- (1,-1) -- (4,2) ;
					\draw[line width=0.7mm](0,0) -- (3,3) -- (4,2);
					
					\draw[line width=0.7mm,dotted](4.5,2) edge (5.5,2);
					\draw[line width=0.7mm,dotted](3.5,1) edge (4.5,1);
					\draw[line width=0.7mm,dotted](4.5,0) edge (5.5,0);
					
					\draw[line width=0.7mm,dotted](-0.5,2) edge (-1.5,2);
					\draw[line width=0.7mm,dotted](0.5,1) edge (-0.5,1);
					\draw[line width=0.7mm,dotted](-0.5,0) edge (-1.5,0);

					\tkzLabelPoint[below](2,1.5){{$\color{red}{\min(a,c)}$}}
					\tkzLabelPoint[below](2,1.15){{$\color{red}{+\max(b,d)}$}}
					\tkzLabelPoint[below](2,0.8){{$\color{red}{-e}$}}

					\tkzLabelPoint[below](1,2.3){{\Huge $\color{blue}{a}$}}
					\tkzLabelPoint[below](3,2.4){{\Huge $\color{blue}{b}$}}
					
					\tkzLabelPoint[below](1,0.27){{\Huge $\color{darkgreen}{c}$}}
					\tkzLabelPoint[below](3,0.4){{\Huge $\color{darkgreen}{d}$}}
					
				\end{scope};
		\end{tikzpicture}}
		\caption{\label{fig:GenDiagop} Illustration of the local operations to calculate $\diag(\lambda, \mu, \nu)$.}
	\end{figure}
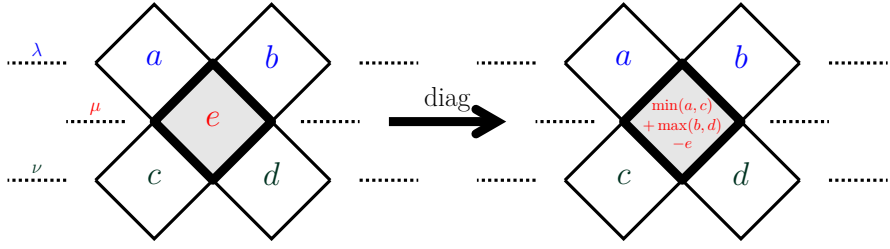
	Remark that $\lambda$ and $\nu$ play symmetric roles : $\diag(\lambda, \mu, \nu) = \diag(\nu, \mu, \lambda)$. Here are some elementary statements we get for the diagonal transformation.
	\begin{lemma}\label{lem:elementary diag store} Let $\lambda, \mu$ and $\nu$ be three integer partitions. When it is well-defined,  consider $\theta = \diag(\lambda, \nu, \mu)$.
		\begin{enumerate}[label = \arabic*)]
			\item \label{ediag0} If $(\lambda, \mu)$ is a storable pair, then $\diag(\lambda,\mu, \mu) = \lambda$.
			
			\item \label{ediag1}  If $(\lambda, \mu, \nu)$ is $(\boxplus, \boxplus)$-storable, then $(\lambda, \theta, \nu)$ is strongly $(\boxminus, \boxminus)$-storable.
			
			\item \label{ediag2}  If $(\lambda, \mu, \nu)$ is $(\boxplus,\boxminus)$-storable, then $(\lambda, \theta, \nu)$ is $(\boxplus, \boxminus)$-storable. 
			
			\item \label{ediag3} If $(\lambda, \mu, \nu)$ is $(\boxminus,\boxplus)$-storable, then $(\lambda, \theta, \nu)$ is $(\boxminus, \boxplus)$-storable.  
			
			\item \label{ediag4} If $(\lambda, \mu, \nu)$ is $(\boxminus,\boxminus)$-storable, then $(\lambda, \theta, \nu)$ is $(\boxplus, \boxplus)$-storable. 
			
			\item \label{ediag5} If $(\lambda, \mu, \nu)$ is either $(\boxplus, \boxplus)$-storable, $(\boxplus, \boxminus)$-storable, $(\boxminus, \boxplus)$-storable or 
			\\ strongly $(\boxminus, \boxminus)$-storable, then $\diag(\lambda, \theta, \nu) = \mu$.
		\end{enumerate}
	\end{lemma}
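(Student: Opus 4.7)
The plan is to verify each of the six assertions directly from \cref{def:diag}, treating the four storability configurations as essentially independent computational cases. Part \ref{ediag0} follows by a single substitution: since $(\lambda, \mu)$ is a storable pair, the triplet $(\lambda, \mu, \mu)$ fits the $(\boxplus, \boxplus)$-storable template, so the formula gives $\theta_1 = \max(\lambda_1, \mu_1) = \lambda_1$ and, for $i \geqslant 2$, $\theta_i = \min(\lambda_{i-1}, \mu_{i-1}) + \max(\lambda_i, \mu_i) - \mu_{i-1} = \mu_{i-1} + \lambda_i - \mu_{i-1} = \lambda_i$, so $\theta = \lambda$.

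For parts \ref{ediag1}--\ref{ediag4}, the key observation is the local picture in \cref{fig:GenDiagop}: each entry $\theta_i$ is obtained from the two entries of $\lambda$ above the $i$th box of $\mu$, the two entries of $\nu$ below it, and the entry $\mu_i$ itself, via the local rule $\min(a,c) + \max(b,d) - e$. To verify the claimed storability configuration of $(\lambda, \theta, \nu)$, I would use the identity $\min(x, y) + \max(x, y) = x + y$ and split into cases depending on which of $\lambda_j$ and $\nu_{j'}$ is larger (for the relevant indices $j, j'$). The inequalities $\theta_i \geqslant \lambda_i$ or $\lambda_i \geqslant \theta_i$ (and analogously for $\nu$) then reduce to adjacent-entry comparisons that follow immediately from the storability of $(\lambda, \mu, \nu)$. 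Strong storability in part \ref{ediag1} is automatic because $\theta_1 = \max(\lambda_1, \nu_1)$, which by definition equals $\lambda_1$ or $\nu_1$.

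Part \ref{ediag5} is an involution statement: applying $\diag$ twice returns $\mu$. By parts \ref{ediag1}--\ref{ediag4}, the triplet $(\lambda, \theta, \nu)$ falls into one of the configurations where $\diag$ is defined, so $\diag(\lambda, \theta, \nu)$ makes sense. A direct substitution of the formulas into themselves, again using $\min(x,y) + \max(x,y) = x + y$, shows that $\theta$ is replaced boxwise by $\mu$. The technical wrinkle is the $(\boxminus, \boxminus)$ case, where the local formula reaches one step further in both $\lambda$ and $\nu$; here strong storability is required to pin down the value of $\theta_1$, which is why only the strong version appears in the hypothesis of \ref{ediag5}.

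The main obstacle is purely bookkeeping. There are four configurations, each with piecewise formulas involving $\min$ and $\max$, and verifying the inequalities and the involution identity requires careful case distinctions and index tracking. No individual computation is deep; the difficulty lies in presenting the cases uniformly. A cleaner organization is to check the $(\boxminus, \boxminus)$ case first, where all entries are computed by the same symmetric rule with no boundary anomaly, and then deduce the other three cases by appropriately reindexing (treating the $(\boxplus, \boxplus)$ case as $(\boxminus, \boxminus)$ with a phantom zeroth column on one or both sides).
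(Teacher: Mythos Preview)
The paper does not give a proof of this lemma; it is introduced with the sentence ``Here are some elementary statements we get for the diagonal transformation'' and then stated without argument. Your proposal to verify each part by direct computation from \cref{def:diag}, using the identity $\min(x,y)+\max(x,y)=x+y$ and the storability inequalities, is precisely the routine check the paper is leaving to the reader, and your computation for part~\ref{ediag0} and your outline for parts~\ref{ediag1}--\ref{ediag5} are correct.
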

	
	\subsection{Rephrasing results of Garver--Patrias--Thomas}
	\label{ss:GPT}
	
	In \cite{GPT19}, they use the following notion. 
	\begin{definition} \label{def:interlaced}
		Let $\rho, \mu$ be two integer partitions, and $t \in \mathbb{Z}$. We say that $\rho$ and $\mu$ are \new{$t$-interlaced} if:
		\begin{enumerate}[label = $\bullet$]
			\item in the case $t \geqslant 0$, $$\rho_1 \geqslant \rho_2 \geqslant \ldots \geqslant \rho_t \geqslant \rho_{t+1} \geqslant \mu_1 \geqslant \rho_{t+2} \geqslant \rho_{t+3} \geqslant \mu_2 \geqslant \rho_{t+4} \geqslant \ldots$$
			
			\item in the case $t \leqslant 0$, $\rho_i = \mu_i$ for $1 \leqslant i \leqslant -t$ and $$\rho_{-t+1} \geqslant \mu_{-t+1} \geqslant \rho_{-t+2} \geqslant \rho_{-t+3} \geqslant \mu_{-t+2} \geqslant \rho_{-t+4} \geqslant \ldots$$
		\end{enumerate}
	\end{definition}
	We can state a link between $t$-interlaced pairs and storable triplets. For two integer partitions $\lambda, \nu$, we denote $\lambda + \nu$ the integer partition whose multiset of parts is composed of the parts of $\lambda$ and $\nu$. 
	\begin{lemma} \label{lem:storinter} Let $\lambda$, $\mu$ and $\nu$ be three integer partitions. The following assertions hold:
		\begin{enumerate}[label = \roman*)]
			\item if $(\lambda, \mu, \nu)$ is $(\boxplus, \boxplus)$-storable, then $\lambda+\nu$ and $\mu$ are $1$-interlaced;
			
			\item if $(\lambda, \mu, \nu)$ is either $(\boxplus, \boxminus)$-storable or $(\boxminus, \boxplus)$-storable, then $\lambda+\nu$ and $\mu$ are $0$-interlaced;
			
			\item if $(\lambda, \mu, \nu)$ is strongly $(\boxminus, \boxminus)$-storable, then $\lambda+\nu$ and $\mu$ are $-1$-interlaced.
		\end{enumerate}
	\end{lemma}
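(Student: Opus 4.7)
The plan is to derive all three assertions from a single counting principle: for any integer partition $\rho$ and any integer $c$, $\rho_k \geqslant c$ holds if and only if at least $k$ parts of $\rho$ are $\geqslant c$, and $\rho_k \leqslant c$ holds if and only if at most $k-1$ parts of $\rho$ are strictly greater than $c$. Since the multiset of parts of $\lambda + \nu$ is by definition the disjoint union of the parts of $\lambda$ and the parts of $\nu$, in each case it will suffice to count, for each relevant index $i$, how many parts of $\lambda$ and of $\nu$ lie above or strictly above the value $\mu_i$.

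For assertion (i), $(\boxplus,\boxplus)$-storability gives $\lambda_j \geqslant \mu_j \geqslant \lambda_{j+1}$ and $\nu_j \geqslant \mu_j \geqslant \nu_{j+1}$. Hence $\lambda_1, \ldots, \lambda_i$ and $\nu_1, \ldots, \nu_i$ are all $\geqslant \mu_i$, producing at least $2i$ parts of $\lambda + \nu$ that are $\geqslant \mu_i$; dually, $\lambda_{i+1}, \nu_{i+1} \leqslant \mu_i$ cap the number of parts strictly above $\mu_i$ at $2i$. The counting principle then yields $(\lambda + \nu)_{2i} \geqslant \mu_i \geqslant (\lambda + \nu)_{2i+1}$ for every $i \geqslant 1$, which is exactly the definition of $1$-interlacing.

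Assertion (ii) is handled the same way, after noting that the two subcases are exchanged by the symmetry $(\lambda, \mu, \nu) \leftrightarrow (\nu, \mu, \lambda)$. In the $(\boxplus,\boxminus)$-case, the chains $\lambda_j \geqslant \mu_j \geqslant \lambda_{j+1}$ and $\mu_j \geqslant \nu_j \geqslant \mu_{j+1}$ give $\lambda_1, \ldots, \lambda_i \geqslant \mu_i$ and, using $\nu_{j-1} \geqslant \mu_j$, also $\nu_1, \ldots, \nu_{i-1} \geqslant \mu_i$, for a total of at least $2i - 1$ parts of $\lambda + \nu$ that are $\geqslant \mu_i$; conversely, $\lambda_{i+1} \leqslant \mu_i$ and $\nu_i \leqslant \mu_i$ bound by $2i - 1$ the number strictly above $\mu_i$. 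This produces $(\lambda + \nu)_{2i-1} \geqslant \mu_i \geqslant (\lambda + \nu)_{2i}$, i.e., $0$-interlacing.

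For assertion (iii), the strong hypothesis is what controls the top of the chain. From $\lambda_j, \nu_j \leqslant \mu_j \leqslant \mu_1$, every part of $\lambda + \nu$ is $\leqslant \mu_1$, while the strong condition $\lambda_1 = \mu_1$ or $\nu_1 = \mu_1$ forces $(\lambda + \nu)_1 = \mu_1$. For the remaining inequalities, $\lambda_j, \nu_j \geqslant \mu_{j+1}$ supplies at index $i+1$ at least $2i$ parts of $\lambda + \nu$ that are $\geqslant \mu_{i+1}$, while $\lambda_{i+1}, \nu_{i+1} \leqslant \mu_{i+1}$ bounds by $2i$ the number strictly above $\mu_{i+1}$, producing $(\lambda + \nu)_{2i} \geqslant \mu_{i+1} \geqslant (\lambda + \nu)_{2i+1}$. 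Combined with $(\lambda + \nu)_1 = \mu_1$, this matches the $t = -1$ clause of the interlacing definition. The main subtlety — and the reason strongness enters only in this case — is precisely that without the strong hypothesis the counting only gives $(\lambda + \nu)_1 \leqslant \mu_1$, which is strictly weaker than the equality $\rho_1 = \mu_1$ demanded by the $t \leqslant 0$ clause.
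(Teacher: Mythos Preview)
Your proof is correct. The paper states this lemma without proof, treating it as a routine verification from the definitions; your counting argument via the multiset description of $\lambda+\nu$ is exactly the kind of direct check the paper is tacitly leaving to the reader, and your explanation of why the strong hypothesis is needed precisely in case (iii) is accurate.
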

	\begin{remark} Note that:
		\begin{enumerate}[label = $\bullet$]
			\item if $(\lambda, \mu, \nu)$ is $(\boxminus, \boxminus)$-storable but not strongly $(\boxminus, \boxminus)$-storable, then $\lambda+\nu$ and $\mu$ are not interlaced.
			
			\item if $(\lambda, \mu, \nu)$ is such that $\lambda+\nu$ and $\mu$ are $t$-interlaced, then this does not imply that $(\lambda, \mu, \nu)$ is a storable triplet.
		\end{enumerate}
	\end{remark}
	Let $q$ be a vertex of an $A_n$ type quiver $Q$, $\pmb{d} \in \mathbb{N}^n$ and $\pmb{\pi} \vdash \pmb{d}$. We extend the tuple of partitions $\pmb{\pi}$ with $\pi^0 = \pi^{n+1} = (0)$. We will write that $\pmb{\pi}$ is \new{(strongly) $(\boxminus, \boxminus)$-storable at $q$} if  $(\pi^{q-1}, \pi^q, \pi^{q+1})$ is a (strongly) $(\boxminus, \boxminus)$-storable triplet. We use the same formulation for the three other storability configurations.
	
	Let $v$ be a source or a sink of $Q$. We define $\sigma_v(\pmb{\pi})$ to be the $n$-tuple of partitions obtained from $\pmb{\pi}$ by replacing $\pi^v$ with $\diag(\pi^{v-1}, \pi^{v}, \pi^{v+1})$.
	\begin{lemma} \label{thm:reflectionGenJF}
		Let $v$ be a vertex of a quiver of $A_n$ type. Let $\pmb{\pi}$ be a $n$-tuple of integer partitions such that $\pmb{\pi}$ is either $(\boxplus, \boxplus)$-storable, $(\boxplus, \boxminus)$-storable, $(\boxminus, \boxplus)$-storable or strongly $(\boxminus, \boxminus)$-storable at $v$. Consider $X = \GenRep(\pmb{\pi})$ and assume that $\GenJF(X) = \pmb{\pi}$. Then, if $v$ is a source, $\GenJF(\mathcal{R}_v^-(X)) = \sigma_v(\pmb{\pi})$. Similarly, if $v$ is a sink, $\GenJF(\mathcal{R}_v^+(X)) = \sigma_v(\pmb{\pi})$. 
	\end{lemma}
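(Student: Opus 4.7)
The plan is to reduce to the case where $v$ is a sink (handling $\mathcal{R}_v^+$), since the source case follows by a dual argument replacing the kernel in the definition of the reflection functor by the cokernel. By the functoriality of $\mathcal{R}_v^+$, any $N \in \NEnd(X)$ induces $\mathcal{R}_v^+(N) \in \NEnd(\mathcal{R}_v^+(X))$. By \cref{thm:eqofcatrefl}, $\mathcal{R}_v^+$ induces an equivalence between $\rep(Q)$ minus the simple projective at $v$ and $\rep(\sigma_v(Q))$ minus the simple injective at $v$, with quasi-inverse $\mathcal{R}_v^-$. Up to those exceptional summands, this therefore provides a bijection between generic nilpotent endomorphisms, and hence $\GenJF(\mathcal{R}_v^+(X))$ is read off from $\JF(\mathcal{R}_v^+(N))$ for $N$ ranging over a dense open subset of $\NEnd(X)$.

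At each vertex $q \neq v$, the construction of the reflection functor gives $\mathcal{R}_v^+(N)_q = N_q$, so the Jordan form at $q$ is preserved and equal to $\pi^q$. Hence only the partition at $v$ needs to be computed. At $v$, the space is $Y_v = \Ker\bigl(\bigoplus_{\alpha:\, t(\alpha)=v} X_\alpha\bigr)$, and $\mathcal{R}_v^+(N)_v$ is the restriction of $\bigoplus_{\alpha:\, t(\alpha)=v} N_{s(\alpha)}$ to $Y_v$. I would compute the Jordan type of this restriction, for generic choice of the structure maps and of $N$, using the Greene--Kleitman description of \cref{thm:GenJFexist} together with \cref{thm:Gansner}: the dimensions $\dim \Ker(\mathcal{R}_v^+(N)_v^k)$ are given by path-support counts on an AR-type subquiver whose vertex weights are read from $\pi^{v-1}$, $\pi^v$, $\pi^{v+1}$. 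Reading these partial sums against the local picture in \cref{fig:GenDiagop} produces exactly the partition $\diag(\pi^{v-1}, \pi^v, \pi^{v+1})$, which is $\sigma_v(\pmb\pi)^v$ by construction.

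The main obstacle is the case analysis between the four storability configurations: the formula defining $\diag$ in \cref{def:diag} has four genuinely different shapes, and in the $(\boxminus, \boxminus)$ case one must additionally use the strongness hypothesis $\pi^{v-1}_1 = \pi^v_1$ or $\pi^v_1 = \pi^{v+1}_1$ to ensure the kernel has the expected dimension (without this, an unwanted shift would appear on the first column, compatible with the observation that non-strong $(\boxminus,\boxminus)$-storability gives no interlacing in \cref{lem:storinter}). A cleaner alternative, in the spirit of \cref{ss:GPT}, is to bypass the direct Gansner computation by invoking \cref{lem:storinter} to rewrite the hypothesis as a $t$-interlacing condition between $\pi^{v-1}+\pi^{v+1}$ and $\pi^v$, and then to transport the corresponding statement of Garver--Patrias--Thomas on the effect of a reflection on interlaced pairs. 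In either route, the boundary cases $v=1$ and $v=n$ reduce by the same argument with one of the neighbors set to the empty partition, which is consistent with the convention $\pi^0 = \pi^{n+1} = (0)$ fixed before the lemma.
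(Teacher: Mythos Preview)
Your ``cleaner alternative'' is exactly the paper's proof: invoke \cref{lem:storinter} to turn the storability hypothesis into $t$-interlacing of $\pi^{v-1}+\pi^{v+1}$ and $\pi^v$ with $t\in\{-1,0,1\}$, and then appeal to \cite[Theorem~3.12]{GPT19}, noting that the argument there extends to $t=-1$.

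The direct route you sketch first is a reasonable plan, but as written it is not a proof. Two concrete gaps. First, the claim that $\mathcal{R}_v^+$ induces a bijection between generic nilpotent endomorphisms uses the equivalence of \cref{thm:eqofcatrefl}, which requires that $X$ have no $S_v$ summand; you acknowledge the exceptional summands but do not explain why they are absent here (nor how the argument would survive their presence). Second, and more seriously, the computation of the Jordan type of $\bigoplus_{\alpha} N_{s(\alpha)}$ restricted to the kernel is only asserted: saying that ``reading these partial sums against the local picture in \cref{fig:GenDiagop} produces exactly $\diag(\pi^{v-1},\pi^v,\pi^{v+1})$'' is precisely the content to be proved, and you yourself flag the four-way case split as the main obstacle. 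Carrying that computation out would essentially reproduce the proof of the cited theorem of \cite{GPT19}; the paper deliberately avoids this by citing it.
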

	\begin{proof}[Proof]
		Let $\pmb{\pi}$ be as assumed. Then  $\rho = \pi^{v+1} + \pi^{v-1}$ and $\mu = \pi^v$ are $t-$interlaced for $t \in \{1,0,-1\}$ by \cref{lem:storinter}. The desired result follows from \cite[Theorem 3.12]{GPT19}. 
	\end{proof}
	
	\begin{remark} \label{rem:t>0}
		\cite[Theorem 3.12]{GPT19} asks for having $t \geqslant 0$. However, one can check that the proof of this result relies on \cite[Lemma 3.6]{GPT19}, which is true for $t \in \mathbb{Z}$. Thus, in reality, this theorem holds for $t < 0$ as well.
	\end{remark}
	
	\begin{theorem}\label{thm:reflectionGenRep} Let $v$ be a vertex of an $A_n$ type quiver $Q$. Let $\pmb{\pi}$ be a $n$-tuple of integer partitions such that $\pmb{\pi}$ is $(\boxplus, \boxplus)$-storable, $(\boxplus, \boxminus)$-storable, $(\boxminus, \boxplus)$-storable, or strongly $(\boxminus, \boxminus)$-storable at $v$. Assume that $X \cong \GenRep(\pmb{\pi})$. Then if $v$ is a source, then $\mathcal{R}_v^-(X) \cong \GenRep(\sigma_v(\pmb{\pi}))$. Similarly, if $v$ is a sink, $\mathcal{R}_v^{+}(X) \cong \GenRep(\sigma_v(\pmb{\pi}))$
	\end{theorem}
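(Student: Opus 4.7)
The plan is to combine \cref{thm:reflectionGenJF} (which describes the action of the reflection on generic Jordan form data) with the equivalence of categories (\cref{thm:eqofcatrefl}) to show that the reflection of a generic representation is again generic.

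First, I would verify that $\GenJF(X) = \pmb{\pi}$, the hypothesis required to invoke \cref{thm:reflectionGenJF}. Since $X \cong \GenRep(\pmb{\pi})$ lies in the dense open subset $\mho \subset \rep(Q, \pmb{\pi})$ of \cref{prop:secondGenRep}, it admits a nilpotent endomorphism of Jordan form $\pmb{\pi}$, so $\pmb{\pi} \pmb{\unlhd} \GenJF(X)$. The assumed storability configuration of $\pmb{\pi}$ at $v$ will be used to rule out the possibility that $\GenJF(X)$ strictly dominates $\pmb{\pi}$: one checks from \cref{def:diag} and the Greene--Kleitman computation that $\pmb{\pi}$ already saturates the bounds imposed by the dimension vector and the storability at $v$. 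Equality then follows, and \cref{thm:reflectionGenJF} yields $\GenJF(\mathcal{R}_v^{\pm}(X)) = \sigma_v(\pmb{\pi})$; in particular, $\mathcal{R}_v^{\pm}(X) \in \rep(\sigma_v(Q), \sigma_v(\pmb{\pi}))$.

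The crucial step is to argue that $\mathcal{R}_v^{\pm}(X)$ lies in the dense open orbit of $\rep(\sigma_v(Q), \sigma_v(\pmb{\pi}))$, which is by definition the isomorphism class of $\GenRep(\sigma_v(\pmb{\pi}))$. To this end, I would shrink $\mho$ to its open subset $\mho_0$ of representations without the relevant simple summand at $v$ (simple projective when $v$ is a sink, simple injective when $v$ is a source). On $\mho_0$, the reflection functor $\mathcal{R}_v^{\pm}$ extends to an algebraic morphism into $\rep(\sigma_v(Q), \sigma_v(\pmb{\pi}))$; a direct verification using \cref{def:diag} shows that $|\diag(\pi^{v-1}, \pi^v, \pi^{v+1})| = |\pi^{v-1}| + |\pi^{v+1}| - |\pi^v|$ in each of the four storability configurations, so the target dimension vector matches. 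By \cref{thm:eqofcatrefl}, $\mathcal{R}_v^{\pm}$ induces a bijection between the sets of isomorphism classes away from the relevant simple summand, so the image $\mathcal{R}_v^{\pm}(\mho_0)$ is contained in a single $\pmb{\operatorname{GL}}$-orbit. Irreducibility of $\mho_0$ and a dimension count then force this orbit to be the dense open orbit of $\rep(\sigma_v(Q), \sigma_v(\pmb{\pi}))$, giving $\mathcal{R}_v^{\pm}(X) \cong \GenRep(\sigma_v(\pmb{\pi}))$.

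The main obstacle is the rigorous algebraic-geometric treatment of the final step: carefully establishing that $\mathcal{R}_v^{\pm}$ extends to a bona fide morphism of varieties on $\mho_0$ (and not merely a functor on categories), matching the dimensions precisely in each storability configuration, and handling the degenerate case where $X$ contains a simple projective or injective summand at $v$ that is annihilated by the reflection. A secondary delicacy is justifying that the storability assumption really pins down $\GenJF(X) = \pmb{\pi}$ rather than permitting a strictly larger generic Jordan form.
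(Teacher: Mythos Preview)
Your proposal differs substantially from the paper's proof, and it contains a genuine gap.

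The paper's argument is a one-line reduction: by \cref{lem:storinter}, the storability hypotheses translate into saying that $\rho = \pi^{v-1} + \pi^{v+1}$ and $\mu = \pi^v$ are $t$-interlaced for some $t \in \{-1,0,1\}$, and then the conclusion is obtained by invoking \cite[Theorem~3.10]{GPT19} (after noting that the proof there extends to $t<0$). All the algebraic geometry is done in the cited reference, which works directly with the variety $\rep(Q,N)$ and shows that a generic point is sent by the reflection to a generic point of the corresponding variety for $\sigma_v(Q)$.

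Your route is different and, as you yourself flag, the first step does not go through. The hypothesis is only $X \cong \GenRep(\pmb{\pi})$; it is \emph{not} assumed that $\GenJF(X) = \pmb{\pi}$, and storability at the single vertex $v$ does not force this equality. Storability of $(\pi^{v-1},\pi^v,\pi^{v+1})$ constrains only those three partitions; it says nothing about the remaining vertices, so there is no reason the Greene--Kleitman computation should ``saturate'' globally. Without $\GenJF(X)=\pmb{\pi}$ you cannot invoke \cref{thm:reflectionGenJF}, and your chain of implications breaks at the very start. (In fact, in \cite{GPT19} the logical order is the reverse of what you attempt: the statement about $\GenRep$ is proved first, and the one about $\GenJF$ is deduced from it.)

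Even granting that step, your final ``dimension count'' is where the real content lies, and it is not supplied: knowing that the reflection functor is a categorical equivalence on the relevant subcategories tells you that $\mathcal{R}_v^{\pm}(\mho_0)$ sits in a single orbit, but not that this orbit is dense in $\rep(\sigma_v(Q),\sigma_v(\pmb{\pi}))$. That density is exactly what \cite[Theorem~3.10]{GPT19} establishes by an explicit analysis, and it is not a formal consequence of the equivalence of categories.
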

	\begin{proof}[Proof]
		Let $\pmb{\pi}$ be as assumed. Then  $\rho = \pi^{v+1} + \pi^{v-1}$ and $\mu = \pi^v$ are $t-$interlaced for $t \in \{1,0,-1\}$ by \cref{lem:storinter}. The result we wished for follows from \cite[Theorem 3.10]{GPT19}.
	\end{proof}
	
	\section{Operations preserving canonical Jordan recoverability}
	\label{s:operationsCJR}
	
	In this section, we introduce elementary categorical operations: adding simple objects and applying reflection functors. Those operations allow one to build any additive subcategory of a Dynkin-type quiver. We show that, under specific assumptions, they preserve Jordan recoverability and canonical Jordan recoverability. Later on, in \cref{s:proof}, we use those operations to build all the maximal canonically Jordan recoverable, based on combinatorial data, developed in \cref{s:adj}, and an algorithm using those operations.
	
	\subsection{Adding a simple representation}
	\label{ss:addsimples}
	
	We define the operation \new{$\Adds_v$} on subcategories of $\rep(Q)$ by $\Adds_v(\mathscr{C}) = \add(\mathscr{C}, S_v)$ for any subcategory $\mathscr{C}$ of $\rep(Q)$. In general, this operation does not preserve the canonical Jordan recoverability property. This subsection shows it does so under a storability condition on generic Jordan forms of all $X \in \mathscr{C}$.
	
	First, we are interested in preserving Jordan recoverability. Before stating the result, we need the following lemma.
	\begin{lemma}\label{GenJFaddsimple}
		Let $v$ be a source or a sink of an $A_n$ type quiver $Q$. Consider $a \in \mathbb{N}$ and $X \in \rep(Q)$. Write $\pmb{\pi} = \GenJF(X)$. Then $\GenJF(S_v^a \oplus X) = \pmb{\xi}$ where $\xi^q = \pi^q$ if $q \neq v$ and $\xi^v = (\pi_1^v + a, \pi_2^v, \pi_3^v, \ldots).$
	\end{lemma}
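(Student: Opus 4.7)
The plan is to reduce the claim to a combinatorial statement about the Greene--Kleitman invariant via \cref{thm:GenJFexist}, which asserts that $\GenJF = \GK$. So, writing $Y = S_v^a \oplus X$, we need to compare $\GK(Y)$ and $\GK(X)$ partition-by-partition.

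First, for any vertex $q \in Q_0$, the computation of $\GK(Y)^q$ only involves the multiplicities of the indecomposables $X_K$ with $K \in \mathcal{I}_n$ such that $q \in K$, namely those lying in $\mathscr{C}_{Q,q}$. The decompositions of $X$ and $Y$ differ only by $a$ additional copies of the simple $S_v = X_{\llrr{v}}$. Since $\llrr{v}$ contains $q$ only when $q = v$, for every $q \neq v$ the restricted filling of the Auslander--Reiten subquiver associated to $\mathscr{C}_{Q,q}$ is unchanged, so the weights of all tuples of maximal paths are unchanged, and therefore $\xi^q = \pi^q$ as required.

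Next I handle the case $q = v$, which is the main content. Here I use crucially that $v$ is a sink or a source of $Q$. If $v$ is a sink, then $S_v = X_{\llrr{v}} = P_v$; dually, if $v$ is a source, $S_v = X_{\llrr{v}} = I_v$. In either case, $X_{\llrr{v}}$ is the common source or common target of all maximal paths in the Auslander--Reiten subquiver associated to $\mathscr{C}_{Q,v}$; hence for every $\ell \geqslant 1$ and every $\gamma \in \Pi_v^\ell$, the support $\Supp(\gamma)$ contains $X_{\llrr{v}}$. Consequently, for any $\gamma \in \Pi_v^\ell$ we have
\[
\wt_Y(\gamma) \;=\; \wt_X(\gamma) + a,
\]
where the $+a$ accounts for the increased multiplicity of $X_{\llrr{v}}$ counted exactly once in $\Supp(\gamma)$. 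Taking maxima,
\[
\max_{\gamma \in \Pi_v^\ell} \wt_Y(\gamma) \;=\; \max_{\gamma \in \Pi_v^\ell} \wt_X(\gamma) + a
\]
for every $\ell \geqslant 1$.

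Finally, applying \cref{combwayGenJF}, the first part $\xi^v_1$ increases by $a$, while for $i \geqslant 2$ the partition part $\xi^v_i$ is obtained as a difference of two consecutive maxima, both of which have shifted by the same constant $a$, so the $+a$ cancels. This gives $\xi^v = (\pi^v_1 + a, \pi^v_2, \pi^v_3, \ldots)$, which is still weakly decreasing since $\pi^v_1 + a \geqslant \pi^v_1 \geqslant \pi^v_2$. The only step requiring attention is the identification $S_v = P_v$ (respectively $S_v = I_v$) when $v$ is a sink (respectively a source), ensuring that $X_{\llrr{v}}$ lies in every path support; once this observation is in place the argument reduces to a uniform shift of the weight statistic.
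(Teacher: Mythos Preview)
Your proof is correct and follows essentially the same approach as the paper: reduce to the Greene--Kleitman invariant via \cref{thm:GenJFexist}, observe that nothing changes at vertices $q \neq v$, and use that $S_v$ equals $P_v$ (if $v$ is a sink) or $I_v$ (if $v$ is a source) so that it lies in the support of every maximal path in $\Pi_v^\ell$. Your presentation is in fact slightly more explicit than the paper's, writing out the uniform shift $\wt_Y(\gamma) = \wt_X(\gamma) + a$ and the resulting cancellation in the differences for $i \geqslant 2$, whereas the paper simply remarks that ``$S_v$ will not reappear in the calculation''; the underlying idea is identical.
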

	\begin{proof}[Proof]
		The lemma is a direct consequence of the combinatorial way to calculate $\pmb{\xi} = \GenJF(S_k^a \oplus X)$. \cref{thm:GenJFexist} tells us that $\GenJF$ is given by the Greene--Kleitman invariant $\GK$ introduced in \cref{ss:GKinv}.
		
		We first note that the calculation of $\pmb{\xi}$ differs from the one for $\pmb{\pi} = \GenJF(X)$ only at the vertex $v$. Moreover, as $v$ is a source (respectively a sink), any maximal path in the Auslander--Reiten quiver of $Q$ over indecomposable objects of $\mathscr{C}$ admitting $v$ in their support will go through $S_v$, a consequence of the fact that $S_v$ is at the end (respectively at the beginning) of any of those paths. Then $\xi_1^v = \pi_1^v + a$. For the other parts of $\xi^v$, as $S_v$ will not reappear in the calculation, $\xi_i^v = \pi_i^v$ for $i \geqslant 2$.
	\end{proof}
	Now we can give a sufficient assumption on Jordan recoverable subcategories $\mathscr{C}$ such that $\Adds_v(\mathscr{C})$ is also Jordan recoverable. 
	\begin{prop}\label{addsimple}
		Let $Q$ be a quiver of $A_n$ type, and $v$ be a source or a sink of $Q$. Let $\mathscr{C} \subset \rep(Q)$ be a Jordan recoverable category such that \begin{enumerate}[label = $(\star)$]
			\item \label{star} For any $X \in \mathscr{C}$, $ \GenJF(X)$ is  strongly $(\boxminus, \boxminus)$-storable at $v$. 
		\end{enumerate}
		Then $\mathscr{D} = \Adds_v(\mathscr{C})$ is Jordan recoverable.
	\end{prop}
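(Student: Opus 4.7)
The plan is to exploit the formula in \cref{GenJFaddsimple}: adding $a$ copies of $S_v$ to any representation perturbs only the first part of the component $\pi^v$ of the generic Jordan form data. Combined with the strong storability assumption \ref{star}, this will let me read off both the $S_v$-multiplicity $a$ and the complementary summand $X \in \mathscr{C}$ directly from $\GenJF(Y)$ for any $Y \in \mathscr{D}$.

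First I would observe that $S_v \notin \mathscr{C}$: for $\GenJF(S_v)$ one has $(\pi^{v-1})_1 = (\pi^{v+1})_1 = 0$ while $(\pi^v)_1 = 1$, so neither of the equalities required by \ref{star} can hold. Hence every $Y \in \mathscr{D}$ admits a unique decomposition $Y \cong S_v^a \oplus X$ with $a \in \mathbb{N}$ and $X \in \mathscr{C}$.

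Next, suppose $Y_1, Y_2 \in \mathscr{D}$ satisfy $\GenJF(Y_1) = \GenJF(Y_2) = \pmb{\xi}$, and write $Y_i \cong S_v^{a_i} \oplus X_i$ with $X_i \in \mathscr{C}$, setting $\pmb{\pi}_i := \GenJF(X_i)$. By \cref{GenJFaddsimple}, $\pi_i^q = \xi^q$ for all $q \neq v$, while $\xi^v_1 = (\pi_i^v)_1 + a_i$ and $\xi^v_j = (\pi_i^v)_j$ for $j \geqslant 2$. Assumption \ref{star} applied to $X_i$ guarantees that $(\pi_i^{v-1}, \pi_i^v, \pi_i^{v+1})$ is strongly $(\boxminus,\boxminus)$-storable: the pair conditions yield $(\pi_i^v)_1 \geqslant (\pi_i^{v-1})_1$ and $(\pi_i^v)_1 \geqslant (\pi_i^{v+1})_1$, while strongness forces equality in at least one of these. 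Therefore $(\pi_i^v)_1 = \max((\pi_i^{v-1})_1, (\pi_i^{v+1})_1) = \max(\xi^{v-1}_1, \xi^{v+1}_1)$, a value depending only on $\pmb{\xi}$. It follows that $(\pi_1^v)_1 = (\pi_2^v)_1$, hence $a_1 = \xi^v_1 - (\pi_1^v)_1 = a_2$ and $\pmb{\pi}_1 = \pmb{\pi}_2$; Jordan recoverability of $\mathscr{C}$ then gives $X_1 \cong X_2$, and thus $Y_1 \cong Y_2$. The only delicate point is verifying that strong $(\boxminus,\boxminus)$-storability really pins $(\pi^v)_1$ down to the exact maximum of the flanking first parts rather than leaving it free above this maximum --- this is immediate from the definitions, so no serious obstacle is anticipated.
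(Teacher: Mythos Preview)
Your proposal is correct and follows essentially the same argument as the paper: decompose each $Y\in\mathscr{D}$ as $S_v^a\oplus X$ with $X\in\mathscr{C}$, use \cref{GenJFaddsimple} to see that only the first part of the $v$th component changes, and then invoke strong $(\boxminus,\boxminus)$-storability to pin down $(\pi^v)_1=\max(\pi^{v-1}_1,\pi^{v+1}_1)$, recovering both $a$ and $\GenJF(X)$ from $\GenJF(Y)$. Your write-up is in fact slightly more explicit than the paper's in justifying why $S_v\notin\mathscr{C}$ and why strongness forces equality with the maximum of the neighboring first parts.
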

	\begin{proof}[Proof]
		Let $v$, $Q$ and $\mathscr{C}$ be as assumed. Remark that $S_v$ is not an indecomposable object of $\mathscr{C}$ by \ref{star}. Consider $\mathscr{D} = \Adds_v(\mathscr{C})$ and let us prove that $\mathscr{D}$ is Jordan recoverable.
		
		Let $Y,Z \in \mathscr{D}$. We know that $Y \cong S_v^a \oplus Y'$ and $Z \cong S_v^b \oplus Z'$ with $Y',Z' \in \mathscr{C}$ and $a, b \in \mathbb{N}$. Suppose that $\GenJF(Y) = \GenJF(Z)$. If we take $\pmb{\lambda} = \GenJF(Y')$ and $\pmb{\mu} = \GenJF(Z')$, then, by \cref{GenJFaddsimple}, we get :
		\begin{enumerate}[label = $\arabic*)$]
			\item $\lambda^q = \mu^q$ for all $q \neq k$;
			
			\item $\lambda_1^v + a = \mu_1^v + b$;
			
			\item $\lambda_s^v = \mu_s^v$ for all $s > 1$. 
		\end{enumerate}
		By \ref{star} and $1)$, we know that $\lambda_1^v = \max(\lambda_1^{v-1}, \lambda_1^{v+1}) = \max(\mu_1^{v-1}, \mu_1^{v+1}) = \mu_1^v$. Hence $a= b$ and  $\pmb{\lambda} = \pmb{\mu}$. Therefore, we get that $Y' \cong Z'$ using the fact that $\mathscr{C}$ is Jordan recoverable. We finally conclude that $Y \cong Z$ and thus $\mathscr{D}$ is Jordan recoverable.
	\end{proof}
	We now show that we preserve canonical Jordan recoverability under the same assumption \ref{star}.
	\begin{prop}\label{addsimple2} Let $Q$ be a quiver of $A_n$ type, and $v$ be a source or a sink of $Q$. Let $\mathscr{C} \subset \rep(Q)$ be a canonically Jordan recoverable category satisfying \ref{star}. Then $\mathscr{D} = \Adds_v(\mathscr{C})$ is canonically Jordan recoverable.
	\end{prop}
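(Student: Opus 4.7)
The plan is as follows. Let $Y \in \mathscr{D}$, decomposed uniquely as $Y \cong S_v^a \oplus Y'$ with $Y' \in \mathscr{C}$ and $a \in \mathbb{N}$; uniqueness holds because \ref{star} rules out $S_v \in \mathscr{C}$, for having $S_v$ as a summand of some $Y'' \in \mathscr{C}$ would force $\GenJF(Y'')^v_1 > \max(\GenJF(Y'')^{v-1}_1, \GenJF(Y'')^{v+1}_1)$, contradicting strong storability. I would then set $\pmb{\pi} = \GenJF(Y')$ and $\pmb{\xi} = \GenJF(Y)$. Canonical Jordan recoverability of $\mathscr{C}$ gives $Y' \cong \GenRep(\pmb{\pi})$, and \cref{GenJFaddsimple} tells us $\xi^q = \pi^q$ for $q \neq v$ while $\xi^v = (\pi^v_1 + a, \pi^v_2, \ldots)$. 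The goal becomes to prove $\GenRep(\pmb{\xi}) \cong Y$. I would treat the case where $v$ is a sink; the source case is dual.

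To realise $\GenRep(\pmb{\xi})$, I would apply \cref{prop:firstGenRep}: fix vector spaces $(V_q)_{q \in Q_0}$ with $\dim V_q = |\xi^q|$ and nilpotent endomorphisms $N_q$ of Jordan form $\xi^q$. Using a Jordan basis $\{e^v_{i,j}\}$ of $N_v$, I single out $V_v^{(1)} := \langle e^v_{1,1}, \ldots, e^v_{1,a}\rangle$ (the top $a$ vectors of the longest block, which has length $\xi^v_1 = \pi^v_1 + a$) and let $V_v^{(2)}$ be the complement spanned by the remaining Jordan basis vectors. Then $V_v^{(2)}$ is $N_v$-invariant with $\JF(N_v|_{V_v^{(2)}}) = \pi^v$; letting $N^{(2)}$ denote $N$ with $V_v$ and $N_v$ replaced by $V_v^{(2)}$ and its restriction, we have $\JF(N^{(2)}) = \pmb{\pi}$.

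The technical heart, and the place where \ref{star} is essential, is the following claim: for every $E \in \rep(Q, N)$ and every arrow $\alpha : u \to v$, $\IIm(E_\alpha) \subseteq V_v^{(2)}$. Since $E_\alpha$ intertwines $N_u$ with $N_v$, the image of the top of the $i$th Jordan block of $N_u$ lies in $\Ker(N_v^{\pi^u_i})$, so it suffices to prove $\Ker(N_v^{\pi^u_i}) \subseteq V_v^{(2)}$. The storability chain $\pi^u_i \leqslant \pi^u_1 \leqslant \pi^v_1 < \xi^v_1$ then implies that on the longest Jordan block of $N_v$ the surviving vectors begin at index $\xi^v_1 - \pi^u_i + 1 \geqslant a + 1$, placing them in $V_v^{(2)}$, while on shorter blocks the surviving vectors lie in $V_v^{(2)}$ by construction. $N_v$-invariance of $V_v^{(2)}$ then propagates the inclusion to all of $\IIm(E_\alpha)$.

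Once the kernel containment is in hand, the rest is formal. The claim yields a natural identification $\rep(Q, N) \cong \rep(Q, N^{(2)})$, and since no arrow touches $V_v^{(1)}$ and $S_v$ is projective at the sink $v$, each $E \in \rep(Q, N)$ splits as $E \cong S_v^a \oplus E^{(2)}$. The dense open $\Omega_{N^{(2)}} \subseteq \rep(Q, N^{(2)})$ given by \cref{prop:firstGenRep} consists of representations isomorphic to $\GenRep(\pmb{\pi}) \cong Y'$, and pulling it back yields a dense open in $\rep(Q, N)$ of representations all isomorphic to $S_v^a \oplus Y' = Y$. Hence $\GenRep(\pmb{\xi}) \cong Y$, proving that $\mathscr{D}$ is canonically Jordan recoverable. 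The main obstacle I anticipate is the kernel-containment claim; once it is established, the bookkeeping is routine.
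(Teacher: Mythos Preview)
Your proof is correct and takes a somewhat different route from the paper's. The paper treats $v$ as a source and works in $\rep(Q,\pmb{\xi})$: from \ref{star} it deduces $N_v^{\pi^v_1}(Z_v) \subseteq \Ker(Z_\alpha) \cap \Ker(Z_\beta)$ for any $Z$ admitting a nilpotent endomorphism of form $\pmb{\xi}$, hence $\mult(S_v,Z) \geqslant a$; it then argues that $\mult(S_v,Z) > a$ is a closed condition not met by $Y$, so generically $Z \cong S_v^a \oplus Z'$, and concludes via canonical Jordan recoverability of $\mathscr{C}$. You instead treat $v$ as a sink, fix a single $N$ with $\JF(N)=\pmb{\xi}$, and establish an explicit isomorphism of affine spaces $\rep(Q,N) \cong \rep(Q,N^{(2)})$ by showing that every intertwiner into $V_v$ lands in the $N_v$-invariant complement $V_v^{(2)}$. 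Your kernel-containment claim is the exact dual of the paper's inclusion above, and both rest on the inequality $\pi^u_1 \leqslant \pi^v_1$ for neighbours $u$ of $v$, which is precisely what \ref{star} supplies. The advantage of your approach is that the passage from $\pmb{\xi}$ to $\pmb{\pi}$ is an honest variety isomorphism, so pulling back the dense open $\Omega_{N^{(2)}}$ is immediate; the paper's route stays at the level of isomorphism classes via \cref{prop:secondGenRep} and is more conceptual, but the step linking the generic $Z'$ back to $\rep(Q,\pmb{\pi})$ is handled more loosely there than in your argument.
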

	
	\begin{proof}[Proof] Consider $Y \in \mathscr{D} =\Adds_k(\mathscr{C})$. By definition of $\mathscr{D}$, there exist $a \in \mathbb{N}$ and $Y' \in \mathscr{C}$ such that $Y \cong S_v^a \oplus Y'$. Write $\pmb{\pi} = \GenJF(Y')$.  We get $\pmb{\xi} = \GenJF(Y)$ from $\pmb{\pi}$ as described in \cref{GenJFaddsimple}. 
		
		Note that from \ref{star}, we know that $\pmb{\pi}$ is strongly $(\boxminus, \boxminus)$-storable at $v$. So $\pi^v_1 = \max(\pi_1^{v-1}, \pi_1^{v+1}).$
		
		Let $Z \in \rep(Q,\pmb{\xi})$. Consider $N \in \NEnd(Z)$ such that $\JF(N) = \pmb{\xi}$. Assume, without loss of generality, that $v$ is a source. Denote $\alpha : v-1 \longleftarrow v$ and $\beta : v \longrightarrow v+1$ the arrows incident to $v$. From the relation $\pi^v_1 = \max(\pi_1^{v-1}, \pi_1^{v+1})$ and the definition of $\pmb{\xi}$, we get that $N^{\pi_1^v}(Z_v) \subseteq \Ker(Z_\alpha) \cap \Ker(Z_\beta)$. Thus, $\mult(S_v, Z) \geqslant a$. Saying that $\mult(S_v, Z) > a$ is equivalent to asking the induced morphism from the quotient $Z_v/N^{\pi_1^v}(Z_v)$ to $Z_\alpha \oplus Z_\beta$ to have a nontrivial kernel, which is a closed condition. As $Y \in \rep(Q,\pmb{\xi})$ and $\mult(S_v,Y) = a$, there exists therefore a dense open set $\Theta \subset \rep(Q,\pmb{\xi})$ such that, for any $Z \in \Theta$, $\mult(S_v,Z) = a$, which means there exists $Z' \in \rep(Q)$ such that $Z \cong S_v^a \oplus Z'$ and $Z'$ has no indecomposable summand isomorphic to $S_v$.
		
		Thanks to \cref{GenJFaddsimple}, we know that $\GenJF(Z') = \pmb{\pi}$. By canonical Jordan recoverability of $\mathscr{C}$, we know that there exists a dense open set $\Phi \subset \rep(Q,\pmb{\pi})$ such that for any $Z' \in \Phi$, $Z' \cong Y'$. Hence there exists a dense open set $\Omega \subset \rep(Q,\pmb{\xi})$ such that, for all $Z \in \Omega$, $Z \cong S_v^a \oplus Y' \cong Y$. We conclude that $\mathscr{D}$ is canonically Jordan recoverable.
	\end{proof}
	
	\subsection{Applying reflection functors}
	\label{ss:mutations}This subsection aims to show that under some general storability conditions, reflection functors preserve canonical Jordan recoverability.
	
	First, we prove the following result.
	\begin{prop}\label{mutationJR}Let $v$ be a vertex of an $A_n$ type quiver $Q$.  Let $\mathscr{C} \subseteq \rep(Q)$ be a Jordan recoverable category such that :
		\begin{enumerate}[label = $(\nabla)$] \item \label{nabla} For any $X \in \mathscr{C}$, $ \GenJF(X)
			$ is either $(\boxplus, \boxplus)$-storable, $(\boxminus, \boxplus)$-storable,\\ 
			$(\boxplus, \boxminus)$-storable or strongly $(\boxminus, \boxminus)$-storable at $v$.\end{enumerate} If $v$ is a source, then $\mathcal{R}_v^{-}(\mathscr{C})$ is a Jordan recoverable category of $\rep(\sigma_v(Q))$. Similarly, if $v$ is a sink, then $\mathcal{R}_v^+(\mathscr{C})$ is a Jordan recoverable category of $\rep(\sigma_v(Q))$.
	\end{prop}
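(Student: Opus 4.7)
The plan is to pull everything back through the reflection functor and reduce to Jordan recoverability of $\mathscr{C}$. Without loss of generality I assume $v$ is a sink of $Q$; the source case is dual. A first observation is that condition $(\nabla)$ excludes $S_v$ from $\mathscr{C}$: the generic Jordan form data of $S_v$ has $\pi^v = (1)$ and $\pi^{v\pm 1} = (0)$, which is $(\boxminus,\boxminus)$-storable but not strongly so, hence not among the four configurations allowed by $(\nabla)$. Because $S_v$ is the simple projective of $\rep(Q)$ at the sink $v$ and, dually, the simple injective of $\rep(\sigma_v(Q))$ at the source $v$, \cref{thm:eqofcatrefl} restricts to give a category equivalence between $\mathscr{C}$ and its image $\mathscr{C}' := \mathcal{R}_v^+(\mathscr{C})$, with quasi-inverse $\mathcal{R}_v^-$.

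Next, I pick $Y_1, Y_2 \in \mathscr{C}'$ with $\GenJF(Y_1) = \GenJF(Y_2)$ and lift them via the equivalence to representations $X_i \in \mathscr{C}$ satisfying $\mathcal{R}_v^+(X_i) \cong Y_i$, unique up to isomorphism. Set $\pmb{\pi}_i = \GenJF(X_i)$. Hypothesis $(\nabla)$ places each $\pmb{\pi}_i$ in one of the four favorable configurations at $v$, so \cref{thm:reflectionGenJF} yields $\GenJF(Y_i) = \sigma_v(\pmb{\pi}_i)$. The assumed equality $\sigma_v(\pmb{\pi}_1) = \sigma_v(\pmb{\pi}_2)$ immediately forces $\pi_1^q = \pi_2^q$ for $q \neq v$, and at $v$ it forces the common value $\theta := \diag(\pi_i^{v-1}, \pi_i^v, \pi_i^{v+1})$ for $i \in \{1,2\}$.

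The key step is recovering $\pi_i^v$ from $\theta$ and the shared outer parts. Items~1--4 of \cref{lem:elementary diag store} show that the triple $(\pi_i^{v-1}, \theta, \pi_i^{v+1})$ lies again in one of the four allowed configurations (its type toggles within that family depending on the original type of $\pmb{\pi}_i$ at $v$), so item~5 applies and gives $\pi_i^v = \diag(\pi_i^{v-1}, \theta, \pi_i^{v+1})$. Since the right-hand side depends only on data common to $i \in \{1,2\}$, we conclude $\pi_1^v = \pi_2^v$, hence $\GenJF(X_1) = \GenJF(X_2)$. Jordan recoverability of $\mathscr{C}$ then gives $X_1 \cong X_2$, and applying the additive functor $\mathcal{R}_v^+$ yields $Y_1 \cong Y_2$, as required.

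The main obstacle is the combinatorial bookkeeping underlying \cref{lem:elementary diag store}: one must verify that the four configurations singled out by $(\nabla)$ are exactly those for which the middle argument of $\diag$ is recoverable from the outer two, and that the conjugate triple $(\pi^{v-1}, \theta, \pi^{v+1})$ again lands in a configuration where item~5 applies. The deliberate exclusion of the non-strong $(\boxminus,\boxminus)$ case in $(\nabla)$ is precisely what prevents the reflection procedure from losing information. A minor technical subtlety is that \cref{thm:reflectionGenJF} is phrased for $X = \GenRep(\pmb{\pi})$; Jordan recoverability of $\mathscr{C}$ makes each $\pmb{\pi}_i$ have at most one representative in $\mathscr{C}$ up to isomorphism, so the formula may safely be applied after identifying $X_i$ with $\GenRep(\pmb{\pi}_i)$ up to isomorphism whenever needed.
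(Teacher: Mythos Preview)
Your argument is correct and mirrors the paper's proof: rule out $S_v$ via $(\nabla)$, lift through the reflection equivalence, transport $\GenJF$ by \cref{thm:reflectionGenJF}, and invert $\sigma_v$ via \cref{lem:elementary diag store} to conclude. One correction to your closing remark: Jordan recoverability of $\mathscr{C}$ does not yield $X_i \cong \GenRep(\pmb{\pi}_i)$, since $\GenRep(\pmb{\pi}_i)$ need not lie in $\mathscr{C}$ at all; the honest fix is that the formula $\GenJF(\mathcal{R}_v^{\pm}(X)) = \sigma_v(\GenJF(X))$ in fact holds for any $X$ satisfying the storability hypothesis, because $\GenJF = \GK$ is computed purely from the multiplicities of indecomposable summands --- the paper itself applies \cref{thm:reflectionGenJF} in this generality without comment.
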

	\begin{proof}[Proof] Assume that $v$ is a source. Let $Y , Z \in \mathcal{R}_v^{-}(\mathscr{C})$. By \ref{nabla}, we know that $S_v \notin \mathscr{C}$. Therefore there exists a unique representation $Y' \in \mathscr{C}$ (up to isomorphism) such that $\mathcal{R}_v^-(Y') \cong Y$. Similarly, there exists a unique $Z' \in \mathscr{C}$ such that  $\mathcal{R}_v^-(Z') = Z$. 
		
		Consider $\pmb{\lambda} = \GenJF(Y')$ et $\pmb{\mu} = \GenJF(Z')$. So $\sigma_v(\pmb{\lambda}) = \GenJF(Y)$ and $\sigma_v(\pmb{\mu}) = \GenJF(Z)$. Now assume that $\pmb{\theta} = \sigma_v(\pmb{\lambda}) = \sigma_v(\pmb{\mu})$. We claim that $Y \cong Z$. We know, by \ref{nabla} and \cref{lem:elementary diag store}, that $\pmb{\theta}$ is either $(\boxplus, \boxplus)$-storable, $(\boxminus, \boxplus)$-storable,  $(\boxplus, \boxminus)$-storable or strongly $(\boxminus, \boxminus)$-storable at $v$. As a consequence of \cref{lem:elementary diag store} \ref{ediag5}, we have $\pmb{\lambda} = \pmb{\mu}$. Using the fact that $\mathscr{C}$ is Jordan recoverable, we conclude that $Y' \cong Z'$. Thus, $Y \cong Z$, and we end up with the result we wished for.
		
		The same goes analogously for $\mathcal{R}_v^+(\mathscr{C})$, whenever $v$ is a sink. 
	\end{proof}
	Under the same assumption \ref{nabla}, we can show that reflection functors also preserve canonical Jordan recoverability.
	\begin{prop} 
		\label{mutationCJR}Let $v$ be a source of an $A_n$-type quiver $Q$.  Let $\mathscr{C} \subset \mathsf{rep}(Q)$ be a canonically Jordan recoverable category satisfying \ref{nabla}. Then $\mathcal{R}_v^{-}(\mathscr{C})$ is a canonically Jordan recoverable subcategory of $\rep(\sigma_v(Q))$. Similarly, if $v$ is a sink, $\mathcal{R}_v^+(\mathscr{C})$ is a canonically Jordan recoverable subcategory of $\rep(\sigma_v(Q))$.
	\end{prop}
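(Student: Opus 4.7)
The plan is to reduce the statement to a direct diagram chase using the three previously established reflection results: the categorical equivalence \cref{thm:eqofcatrefl}, the transformation of generic Jordan form data under reflection \cref{thm:reflectionGenJF}, and the transformation of generic representations under reflection \cref{thm:reflectionGenRep}. The hypothesis \ref{nabla} is precisely tailored so that both \cref{thm:reflectionGenJF} and \cref{thm:reflectionGenRep} apply at $v$ for every object of $\mathscr{C}$.

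Assume first that $v$ is a source of $Q$. I would start by observing that \ref{nabla} forces $S_v \notin \mathscr{C}$: the generic Jordan form of $S_v$ is zero everywhere except a single part $(1)$ at $v$, and a direct check shows this tuple is not strongly $(\boxminus,\boxminus)$-storable, nor $(\boxplus,\boxplus)$-, $(\boxplus,\boxminus)$- or $(\boxminus,\boxplus)$-storable at $v$. Hence by \cref{thm:eqofcatrefl}, the reflection functor $\mathcal{R}_v^-$ gives a bijection (up to isomorphism) between indecomposables of $\mathscr{C}$ and indecomposables of $\mathcal{R}_v^-(\mathscr{C})$, so for every $Y \in \mathcal{R}_v^-(\mathscr{C})$ there is a unique (up to iso) $X \in \mathscr{C}$ with $\mathcal{R}_v^-(X) \cong Y$.

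Now fix such $Y$ and $X$, and set $\pmb{\pi} = \GenJF(X)$. Since $\mathscr{C}$ is canonically Jordan recoverable, $X \cong \GenRep(\pmb{\pi})$. Hypothesis \ref{nabla} supplies exactly the storability condition required by \cref{thm:reflectionGenJF} and \cref{thm:reflectionGenRep}, so applying these two results gives
\[
\GenJF(Y) \;=\; \GenJF(\mathcal{R}_v^-(X)) \;=\; \sigma_v(\pmb{\pi})
\quad\text{and}\quad
Y \;\cong\; \mathcal{R}_v^-(\GenRep(\pmb{\pi})) \;\cong\; \GenRep(\sigma_v(\pmb{\pi})).
\]
Combining these two identities yields $Y \cong \GenRep(\GenJF(Y))$, which is the defining property of canonical Jordan recoverability for $\mathcal{R}_v^-(\mathscr{C})$. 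The case where $v$ is a sink is entirely symmetric, swapping $\mathcal{R}_v^-$ for $\mathcal{R}_v^+$ and invoking the sink versions of the same three results.

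The only nontrivial verification — and what I would treat as the main technical point — is to confirm that the storability hypothesis on $\pmb{\pi}$ at $v$ is \emph{exactly} the hypothesis needed to invoke \cref{thm:reflectionGenJF} and \cref{thm:reflectionGenRep} (and not some weaker or stronger variant). This is where the careful distinction in \ref{nabla} between generic and strong $(\boxminus,\boxminus)$-storability is essential: only under strong storability in the $(\boxminus,\boxminus)$ case do the two reflection theorems apply, and this is precisely what \ref{nabla} encodes. Apart from this bookkeeping, the argument is formal once the three supporting results are in hand, and Jordan recoverability of $\mathcal{R}_v^-(\mathscr{C})$ from \cref{mutationJR} is not even logically needed here, since canonical Jordan recoverability implies Jordan recoverability directly.
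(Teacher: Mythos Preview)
Your proof is correct and follows essentially the same route as the paper's: observe $S_v\notin\mathscr{C}$ from \ref{nabla}, lift $Y$ to a unique $X\in\mathscr{C}$ via the reflection equivalence, then chain canonical Jordan recoverability of $\mathscr{C}$ with \cref{thm:reflectionGenJF} and \cref{thm:reflectionGenRep} to obtain $\GenRep(\GenJF(Y))\cong Y$. Your additional remarks (the explicit check that $\GenJF(S_v)$ fails all four storability conditions, and the observation that \cref{mutationJR} is not logically required) are correct but go slightly beyond what the paper records.
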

	\begin{proof}[Proof] Assume that $v$ is a source. Let $Y \in \mathcal{R}_v^-(\mathscr{C})$. By \ref{nabla}, $S_v \notin \mathscr{C}$. So there exists a unique representation $Y' \in \mathscr{C}$ (up to isomorphism) such that $\mathcal{R}_v^-(Y') \cong Y$.
		
		Let $\pmb{\pi} = \GenJF(Y')$. As $\mathscr{C}$ is canonically Jordan recoverable, we have that $\GenRep(\pmb{\pi}) \cong Y'$. We can use \cref{thm:reflectionGenJF} to get that $\GenJF(Y) = \sigma_v(\pmb{\pi})$. Moreover,  \cref{thm:reflectionGenRep} gives us that $\GenRep(\sigma_v(\pmb{\pi})) \cong \mathcal{R}_v^-(Y') \cong Y$.  This completes the proof.
		
		The same goes similarly for $\mathcal{R}_v^+(\mathscr{C})$, whenever $v$ is a sink.
	\end{proof}
	
	\section{Adjacency-avoiding interval sets}
	\label{s:adj}
	
	Recall the following definition.
	\begin{definition}
		Two intervals $K,L \in \mathcal{I}_n$ are \new{adjacent} if either $b(K) = e(L)+1$ or $b(L) = e(K)+1$. An interval set $\mathscr{J}$ is said to be \new{adjacency-avoiding} if there are no pairs of adjacent intervals in $\mathscr{J}$; meaning, in a more affirmative way, that for all $K,L \in \mathscr{J}$, we have either $K \cap L \neq \varnothing$, $b(K) \geqslant e(L)+2$ or $b(L) \geqslant e(K)+2$.
	\end{definition}
	We saw in \cref{ss:IntroAdjavoid} that this notion is a crucial point to describe the canonically Jordan recoverable subcategories of $\rep(Q)$ for any $A_n$ type quiver $Q$. We aim to investigate the combinatorial behavior of this family of interval sets, with their representation-theoretic interest in mind.
	
	More precisely, we give an explicit description of all the (maximal) adjacency-avoiding subsets of intervals, and we explore their behavior under the combinatorial lookalike operators of the ones in \cref{s:operationsCJR}. In \cref{s:proof}, while applying an algorithm using those operators, it allows us to keep track of the category we are constructing, and to check that they correspond to the stated categories in \cref{maintheorem}.
	
	\subsection{Interval sets from shifted bipartitions}
	\label{ss:intsets}
	
	In this section, we describe and characterize all maximal adjacency-avoiding subsets of $\mathcal{I}_n$. 
	\begin{definition}
		Let $\B$ and $\E$ be two subsets of $\{1, \ldots,n\}$. We define $\mathscr{J}(\B,\E)$ to be the following subset of $\mathcal{I}_n$:
		$$\mathscr{J}(\B,\E) = \{K \in \mathcal{I}_n \mid b(K) \in \B \text{ and } e(K) \in \E \}.$$
	\end{definition}
	\begin{ex} Let $n \geqslant 1$. For $m \in \{1, \ldots n\}$ $(\B =\llrr{1,m},\E = \llrr{m,n})$, we get $$\mathscr{J}(\B,\E) = \{K \in \mathcal{I}_n \mid m \in K \}.$$
	\end{ex}
	Note that for a given pair of subsets $(\B,\E)$, there could exist $b \in \B$ such that for all $e \in E$, $e < b$: this implies that there are no intervals $K$ in $\mathscr{J}(\B,\E)$ such that $b(K) = b$. Hence $\mathscr{J}(\B,\E) = \mathscr{J}(\B \setminus \{b\}, \E)$. We consider the following notion, as we want to completely characterize these interval subsets by pairs of subsets $(\B,\E)$. We say that $(\B,\E)$ is a \new{effective pair of subsets} if it satisfies the two assertions below:
	\begin{enumerate}[label = $\bullet$]
		\item for all $b \in \B$ there exists $e \in \E$ such that $b \leqslant e$
		
		\item for all $e \in \E$, there exists $b \in \B$ such that $b \leqslant e$. 
	\end{enumerate}
	\begin{lemma}\label{lem:well-placedcaract} For all $(\mathbf{C}, \mathbf{F})$ pairs of subsets of $\{1, \ldots, n\}$, there exists a unique effective pair of subsets $(\B,\E)$ of $\{1, \ldots, n\}$ such that $\mathscr{J}(\mathbf{C}, \mathbf{F}) = \mathscr{J}(\B,\E)$.
	\end{lemma}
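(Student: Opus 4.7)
The plan is to construct the effective pair $(\B,\E)$ explicitly from $(\mathbf{C},\mathbf{F})$ by discarding precisely those elements which contribute nothing to $\mathscr{J}(\mathbf{C},\mathbf{F})$, namely starts with no valid end and ends with no valid start. Concretely, I would set
\[
\B \;=\; \{b \in \mathbf{C} \mid \exists\, e \in \mathbf{F},\ b \leqslant e\}, \qquad \E \;=\; \{e \in \mathbf{F} \mid \exists\, b \in \mathbf{C},\ b \leqslant e\}.
\]

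The verification that this works proceeds in three short steps. First, $\mathscr{J}(\B,\E) \subseteq \mathscr{J}(\mathbf{C},\mathbf{F})$ is immediate from $\B \subseteq \mathbf{C}$ and $\E \subseteq \mathbf{F}$. For the reverse inclusion, pick $K \in \mathscr{J}(\mathbf{C},\mathbf{F})$; then $b(K) \in \mathbf{C}$, $e(K) \in \mathbf{F}$, and $b(K) \leqslant e(K)$ witness $b(K) \in \B$ and $e(K) \in \E$. Second, to see that $(\B,\E)$ is effective, given $b \in \B$ pick $e \in \mathbf{F}$ with $b \leqslant e$; since $b \in \mathbf{C}$ witnesses that $e \in \E$, effectiveness holds on the $\B$-side, and the $\E$-side is symmetric.

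For uniqueness, suppose $(\B_1,\E_1)$ and $(\B_2,\E_2)$ are both effective with $\mathscr{J}(\B_1,\E_1) = \mathscr{J}(\B_2,\E_2)$. Given $b \in \B_1$, effectiveness furnishes $e \in \E_1$ with $b \leqslant e$, so $\llrr{b,e} \in \mathscr{J}(\B_1,\E_1) = \mathscr{J}(\B_2,\E_2)$, whence $b \in \B_2$. By symmetry and the analogous argument for the second coordinate, $\B_1 = \B_2$ and $\E_1 = \E_2$.

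I do not anticipate a serious obstacle here: the content of the lemma is essentially that ``effective'' is the right notion of a canonical representative, and both existence (by the trimming above) and uniqueness (by probing with the intervals $\llrr{b,e}$ guaranteed by effectiveness) follow by unwinding definitions. The only mild subtlety worth stressing in the write-up is that in the uniqueness argument one must use effectiveness on one pair to produce a test interval that is then detected by the other pair; without effectiveness, obviously many distinct pairs yield the same interval set, which is exactly what the lemma rules out.
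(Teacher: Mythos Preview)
Your proof is correct and complete. The paper itself states this lemma without proof, treating it as an elementary observation; your explicit trimming construction and the uniqueness argument via test intervals $\llrr{b,e}$ are exactly the natural way to fill in the details.
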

	For all $\A \subset \{1, \ldots, n\}$, we denote by $\A[1]$ \new{the shift of $\A$} defined by $\A[1] = \{ a + 1 \mid a \in \A \}$. 
	\begin{definition}\label{avoidingcandiddef}
		Let $(\B,\E)$ be a pair of subsets of $\{1, \ldots, n\}$. We say that $(\B,\E)$ is a \new{shifted disjoint pair} if $\B \cap \E[1] = \varnothing$. 
	\end{definition}
	\begin{prop}\label{prop:avoid} Let $(\B,\E)$ be an effective shifted disjoint pair of subsets in $\{1,\ldots,n\}$. Then $\mathscr{J}(\B,\E)$ is adjacency-avoiding.
	\end{prop}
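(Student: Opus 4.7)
The plan is to proceed by contradiction, unpacking the definitions directly. Suppose $\mathscr{J}(\B,\E)$ fails to be adjacency-avoiding, so there exist intervals $K, L \in \mathscr{J}(\B,\E)$ that are adjacent. By the symmetric definition of adjacency, I may assume without loss of generality that $b(K) = e(L) + 1$.

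Now I extract the consequences of $K, L \in \mathscr{J}(\B,\E)$: by definition, $b(K) \in \B$ and $e(L) \in \E$. Substituting the adjacency relation gives
\[
b(K) = e(L) + 1 \in \E[1],
\]
using the definition of the shift $\E[1] = \{e+1 \mid e \in \E\}$. But then $b(K) \in \B \cap \E[1]$, which directly contradicts the hypothesis that $(\B, \E)$ is shifted disjoint, i.e. $\B \cap \E[1] = \varnothing$.

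There is no real obstacle here; the proof is essentially a one-line unpacking of the definitions, and effectiveness of $(\B, \E)$ plays no role in this direction (it is only relevant for the uniqueness statement of \cref{lem:well-placedcaract}). I would just need to be careful to handle both adjacency cases symmetrically, but this is immediate since the hypothesis $\B \cap \E[1] = \varnothing$ is insensitive to the roles of $K$ and $L$. The converse direction (that every maximal adjacency-avoiding interval set arises this way) is the genuinely substantive statement and will presumably be addressed in a subsequent result.
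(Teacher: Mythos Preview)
Your proof is correct and follows essentially the same approach as the paper: both arguments use the adjacency relation to produce an element lying in $\B \cap \E[1]$, contradicting shifted disjointness. The paper phrases it directly (showing any two non-intersecting $K,L$ satisfy $b(L) \geqslant e(K)+2$) rather than by contradiction, and your observation that effectiveness is irrelevant here is accurate.
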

	\begin{proof}[Proof]
		Let $K,L \in \mathscr{J}(\B, \E)$. We want to prove that they are not adjacent. If they intersect, we are done. Otherwise, without loss of generality, assume that $b(L) > e(K)$. By definition, $b(L)  \in \B$ and $e(K) \in \E$. We know that $\B\cap\E[1] = \varnothing$. This means that $e(K) + 1 \notin \B$ and so $b(L) \geqslant e(K)+2$. Therefore, $K$ and $L$ are not adjacent.
	\end{proof}
	\begin{definition}\label{candidatedef}
		A pair of subsets $(\B,\E)$ of $\{1, \ldots, n\}$ is a \new{shifted interval bipartition} if $\B \cup \E[1] \in \mathcal{I}_{n+1} \cup \{\varnothing\}$, and $\B \cap \E[1] = \varnothing$. Moreover, such a pair is said to be \new{complete} if $\B \cup \E[1] = \{1, \ldots, n+1\}$.
	\end{definition}
	\begin{remark} \label{rem:completrsib} Some remarks:
		\begin{enumerate}[label = $\bullet$]
			\item Note that if $(\B,\E)$ is a pair of subsets of $\{1, \ldots, n\}$ such that $\B \cup \E[1] = \{1, \ldots, n+1\}$, then $(\B,\E)$ is effective since necessarily $1 \in \B$ and $n \in \E$.

			\item Define $\rev$ the \new{reverse map} on $\{1, \ldots, n\}$, by $\rev(i) =  n+1-i$. Write $\A^{\rev} = \rev(\A)$ for all $\A \subseteq \{1, \ldots, n\}$. Now if $(\B,\E)$ is an effective shifted interval bipartition, then $(\E^{\rev},\B^{\rev})$ is too.
		\end{enumerate}
	\end{remark}
	We will show that the complete shifted bipartitions describe all maximal (with respect to inclusion) adjacency-avoiding interval sets.
	\begin{lemma}\label{lem:crsbipint} Let $(\B,\E)$ be a complete shifted bipartition of $\{1, \ldots, n\}$. Then for all $K \in \mathcal{I}_n \setminus \mathscr{J}(\B,\E)$, either $b(K)-1 \in \E$ or $e(K)+1 \in \B$.
	\end{lemma}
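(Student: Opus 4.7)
The plan is to unpack the defining conditions of a complete shifted bipartition and use the partition property directly. The hypothesis $(\B,\E)$ complete means that $\B \cup \E[1] = \{1, \ldots, n+1\}$ together with $\B \cap \E[1] = \varnothing$, so $\B$ and $\E[1]$ form a set partition of $\{1, \ldots, n+1\}$.

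Let $K \in \mathcal{I}_n \setminus \mathscr{J}(\B,\E)$. By the definition of $\mathscr{J}(\B,\E)$, the failure $K \notin \mathscr{J}(\B,\E)$ means that at least one of $b(K) \in \B$ or $e(K) \in \E$ fails. I will handle the two cases separately.

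First, if $b(K) \notin \B$, observe that $b(K) \in \{1, \ldots, n\} \subseteq \{1, \ldots, n+1\}$. Since $\B \cup \E[1]$ covers $\{1, \ldots, n+1\}$, the element $b(K)$ must lie in $\E[1]$, which by the definition of the shift means $b(K) - 1 \in \E$. Second, if $e(K) \notin \E$, then $e(K) + 1 \notin \E[1]$. But $e(K) + 1 \in \{2, \ldots, n+1\} \subseteq \{1, \ldots, n+1\}$, so by completeness $e(K) + 1 \in \B$. In either case we obtain exactly one of the two desired conclusions, finishing the proof.

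There is no real obstacle here: the statement is essentially a direct translation of the partition condition $\B \sqcup \E[1] = \{1, \ldots, n+1\}$ into the language of intervals, once one remembers that $b(K) \geqslant 1$ and $e(K) \leqslant n$ guarantee that both $b(K)$ and $e(K)+1$ lie in the ambient set $\{1, \ldots, n+1\}$. The only thing worth double-checking is that the two cases are not mutually exclusive but also not simultaneously forced, which matches the inclusive ``either/or'' in the statement.
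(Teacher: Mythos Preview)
Your proof is correct and follows essentially the same argument as the paper's own proof: both use the partition $\B \sqcup \E[1] = \{1,\ldots,n+1\}$ to conclude that $b(K)\notin\B$ forces $b(K)-1\in\E$, and $e(K)\notin\E$ forces $e(K)+1\in\B$. You are slightly more careful in explicitly verifying that $b(K)$ and $e(K)+1$ belong to $\{1,\ldots,n+1\}$, but otherwise the arguments are identical.
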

	\begin{proof}[Proof]
		Let $K \in \mathcal{I}_n \setminus \mathscr{J}(\B,\E)$. We have either $b(K) \notin \B$ or $e(K) \notin \E$. We know that $\B \cup \E[1] = \{1, \ldots, n+1\}$ and $\B \cap \E[1] = \varnothing$. If $b(K) \notin \B$ then $b(K) \in \E[1]$ and therefore $b(K)-1 \in \E$. Otherwise $e(K) \notin \E$ and then $e(K)+1 \in \B$.
	\end{proof}
	
	\begin{prop}\label{prop:maxavoid} Let $(\B,\E)$ be a complete shifted  bipartition of $\{1, \ldots, n\}$. Then $\mathscr{J}(\B,\E)$ is a maximal (for inclusion) adjacency-avoiding subset of $\mathcal{I}_n$.
	\end{prop}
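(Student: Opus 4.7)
The plan is to package adjacency-avoidance (already handled by \cref{prop:avoid}) together with a constructive argument that every interval not in $\mathscr{J}(\B,\E)$ is adjacent to some interval that is in $\mathscr{J}(\B,\E)$. The key preliminary observation is that completeness forces $1 \in \B$ and $n \in \E$: indeed, $\B \cup \E[1] = \{1, \ldots, n+1\}$ with $\B \subseteq \{1, \ldots, n\}$ and $\E[1] \subseteq \{2, \ldots, n+1\}$, so $1$ must lie in $\B$ (since it cannot lie in $\E[1]$) and $n+1$ must lie in $\E[1]$ (since it cannot lie in $\B$), giving $n \in \E$. In particular, the pair $(\B,\E)$ is effective (this is also noted in \cref{rem:completrsib}), and it is a shifted disjoint pair by the definition of shifted interval bipartition, so \cref{prop:avoid} immediately gives that $\mathscr{J}(\B,\E)$ is adjacency-avoiding.

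For the maximality, I would pick an arbitrary $K \in \mathcal{I}_n \setminus \mathscr{J}(\B,\E)$ and exhibit an $L \in \mathscr{J}(\B,\E)$ such that $K$ and $L$ are adjacent; this shows that $\mathscr{J}(\B,\E) \cup \{K\}$ is no longer adjacency-avoiding. By \cref{lem:crsbipint}, there are two cases: either $b(K)-1 \in \E$ or $e(K)+1 \in \B$. In the first case, set $L = \llrr{1, b(K)-1}$; this is a valid interval because $b(K)-1 \in \E \subseteq \{1, \ldots, n\}$ forces $b(K) \geqslant 2$. Then $b(L) = 1 \in \B$ and $e(L) = b(K)-1 \in \E$, so $L \in \mathscr{J}(\B,\E)$, and $b(K) = e(L) + 1$ shows $K$ and $L$ are adjacent. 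In the second case, set $L = \llrr{e(K)+1, n}$; again this is a valid interval since $e(K)+1 \in \B \subseteq \{1, \ldots, n\}$ gives $e(K)+1 \leqslant n$. Here $b(L) = e(K)+1 \in \B$ and $e(L) = n \in \E$, so $L \in \mathscr{J}(\B,\E)$, and $b(L) = e(K)+1$ gives the required adjacency with $K$.

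There is no substantial obstacle here: adjacency-avoidance is a direct appeal to the previous proposition, and maximality reduces via \cref{lem:crsbipint} to the two symmetric case checks above. The only care needed is to produce, in each case, a genuine interval in $\mathscr{J}(\B,\E)$, and this is exactly where the completeness hypothesis (through $1 \in \B$ and $n \in \E$) is used. Combining the two paragraphs yields the proposition.
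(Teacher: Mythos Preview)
Your proof is correct and follows essentially the same approach as the paper: invoke \cref{prop:avoid} for adjacency-avoidance, then use \cref{lem:crsbipint} to split into two cases and exhibit the adjacent interval $\llrr{1,b(K)-1}$ or $\llrr{e(K)+1,n}$ accordingly. You supply a few more details (notably the explicit check that $1\in\B$ and $n\in\E$, and that the constructed $L$ is a genuine interval), but the argument is otherwise identical.
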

	\begin{proof}[Proof]
		By \cref{prop:avoid}, we already know that $\mathscr{J} = \mathscr{J}(\B, \E)$ is adjacency-avoiding.  
		
		Let $K \in \mathcal{I}_n \setminus \mathscr{J}$.  Then either $b(K)-1 \in \E$ or $e(K)+1 \in \B$ by \cref{lem:crsbipint}. In the first case, by taking $\llrr{1,b(K)-1}$ or, in the second case, by taking $\llrr{e(K)+1,n}$, we conclude that $\mathscr{J} \cup \{K\}$ is not adjacency-avoiding.
	\end{proof}
	
	\begin{lemma} \label{lem:maxadja}
		Consider $\mathscr{J}$ a maximal (for inclusion) adjacency-avoiding subset of intervals of $\mathcal{I}_n$. Then:
		\begin{enumerate}[label = $(\roman*)$]
			\item \label{maxadja1n}  $\llbracket 1; n \rrbracket \in \mathscr{J}$;
			
			\item \label{maxadjacup}  if $K,L \in \mathscr{J}$ with $b(K) \leqslant e(L)$, then $\llbracket b(K), e(L) \rrbracket \in \mathscr{J}$
			
			\item \label{maxadjacap} if $K,L\in \mathscr{J}$ and if $K \cap L \neq \varnothing$, then $K \cap L \in \mathscr{J}$
			
			\item \label{maxadjasimp}  if $K \in \mathscr{J}$, then there exists $m \in K$ such that $\llbracket m \rrbracket \in \mathscr{J}$.
		\end{enumerate}
	\end{lemma}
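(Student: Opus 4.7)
The plan is to handle the four items in turn, relying on a uniform \emph{maximality test} for (i)--(iii): to show that a new interval $M$ lies in $\mathscr{J}$, it suffices to check that $M$ is not adjacent to any element of $\mathscr{J}$, for then $\mathscr{J}\cup\{M\}$ is still adjacency-avoiding and maximality forces $M\in\mathscr{J}$.

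For (i), note that $\llrr{1,n}$ cannot be adjacent to any interval, since adjacency would require a bound equal to $0$ or to $n+1$. For (ii), set $M=\llrr{b(K),e(L)}$ (an interval since $b(K)\leqslant e(L)$); if some $N\in\mathscr{J}$ were adjacent to $M$, then either $b(N)=e(M)+1=e(L)+1$, making $N$ adjacent to $L$, or $e(N)=b(M)-1=b(K)-1$, making $N$ adjacent to $K$ -- both contradict the adjacency-avoiding property of $\mathscr{J}$. For (iii), the same idea works with $P=K\cap L$: the bounds are $b(P)=\max(b(K),b(L))$ and $e(P)=\min(e(K),e(L))$, and adjacency of any $N\in\mathscr{J}$ to $P$ immediately forces adjacency to whichever of $K$ or $L$ realizes the relevant extremum.

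For (iv), which I expect to be the main step, I would pick $L\in\mathscr{J}$ of minimal length among elements of $\mathscr{J}$ contained in $K$; such an $L$ exists since $K$ itself is a candidate. If $L=\llrr{m}$ is a singleton, we are done because $m\in K$. Otherwise $L=\llrr{i',j'}$ with $i'<j'$, and I consider the singleton $\llrr{i'}$. If $\llrr{i'}\in\mathscr{J}$, we are done. Otherwise, the maximality test produces some $N\in\mathscr{J}$ adjacent to $\llrr{i'}$, so either $b(N)=i'+1$ or $e(N)=i'-1$. The case $e(N)=i'-1$ is impossible since it would make $N$ adjacent to $L$ itself. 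Hence $b(N)=i'+1\leqslant j'$, both $N$ and $L$ contain $i'+1$, and by (iii) we get $N\cap L\in\mathscr{J}$; but $b(N\cap L)=i'+1>b(L)$, so $N\cap L$ is strictly shorter than $L$ and still contained in $K$, contradicting minimality.

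The principal obstacle is (iv): parts (i)--(iii) are each a single application of the maximality test, but (iv) cannot be reduced to merely adjoining one interval. It requires combining maximality with the intersection closure of (iii) to keep shrinking intervals inside $K$ until a singleton appears, and the cleanest packaging of this argument is to work with a length-minimal element of $\mathscr{J}$ inside $K$, as sketched above.
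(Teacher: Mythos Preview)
Your proof is correct and follows essentially the same approach as the paper's. The only differences are cosmetic: for (iii) the paper reduces to (ii) by writing $K\cap L=\llrr{b(K),e(L)}$ after a WLOG ordering, whereas you give a direct adjacency argument; for (iv) both proofs pick a minimal element of $\mathscr{J}$ contained in $K$ and combine maximality with (iii), the paper testing an arbitrary subinterval of the minimal $T$ while you test only the left-endpoint singleton.
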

	\begin{proof}[Proof] Let $\mathscr{J}$ be as assumed.
		\begin{enumerate}[label = $(\roman*)$]
			\item No interval $K \in \mathcal{I}_n$ is  adjacent to $\llbracket 1;n \rrbracket$. Hence $\mathscr{J} \cup \{ \llbracket 1;n \rrbracket\}$ is adjacency-avoiding. By maximality of $\mathscr{J}$, $\llbracket 1;n \rrbracket \in \mathscr{J}$.
			
			\item Let $K,L \in \mathscr{J}$ such that $b(K) \leqslant e(L)$. There is no interval $T \in \mathscr{J}$ adjacent to $\llbracket b(K), e(L) \rrbracket$; otherwise, such a $T$ would have to be adjacent to either $K$ or $L$. Hence $\mathscr{J} \cup \{ \llbracket b(K), e(L) \rrbracket \}$ is adjacency-avoiding. By maximality of $\mathscr{J}$, we have $ \llbracket b(K), e(L) \rrbracket \in \mathscr{I}$.
			
			\item Let $K,L \in \mathscr{J}$ such that $K \cap L \neq \varnothing$. Without loss of generality, we may assume that  $b(L) \leqslant e(K)$. Therefore $b(L) \leqslant b(K) \leqslant e(L) \leqslant e(K)$ and $K \cap L = \llrr{b(K),e(L)}$. By \ref{maxadjacup}, we conclude that $K \cap L \in \mathscr{J}$.
			
			\item Let $K \in \mathscr{J}$. Let us consider a minimal interval $T \subseteq K$ such that $T \in \mathscr{J}$
			
			Let $U \in \mathcal{I}_n$ such that $\varnothing \neq U \subseteq T$. If there exists $L \in \mathscr{J}$ such that $L$ is adjacent to $U$, then either $L$ is adjacent to $T$, which is impossible by hypothesis on $\mathscr{J}$, or $L \cap T \neq \varnothing$. Using \ref{maxadjacap}, we get that $T \supseteq L \cap T \in \mathscr{J}$. By minimality of $T$ in $\mathscr{J}$, we assert that $L \cap T = T$ and thus $L \cap U = U \neq \varnothing$, contradicting the fact that $L$ and $U$ are adjacent. Thus, for all $L \in \mathscr{J}$, $L$ and $U$ are not adjacent.
			
			We obtain that $ \mathscr{J} \cup \{U\}$ is adjacency-avoiding. By maximality of $\mathscr{J}$, we get that $T=U$. The only case where any nonempty subset of $T$ is equal to $U$ is when $b(T) = e(T)$. We deduce the desired result. \qedhere
		\end{enumerate}
	\end{proof}
	\begin{prop} \label{themaxavoid} All the maximal adjacency-avoiding subsets of $\mathcal{I}_n$ can be written as $\mathscr{J}(\B, \E)$ where $(\B,\E)$ is a complete shifted bipartition of $\{1, \ldots, n\}$.
	\end{prop}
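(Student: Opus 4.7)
My plan is to explicitly produce the pair $(\B,\E)$ from a maximal adjacency-avoiding set $\mathscr{J}$ by setting
\[
\B = \{b(K) : K \in \mathscr{J}\}, \qquad \E = \{e(K) : K \in \mathscr{J}\},
\]
and then verify that $(\B,\E)$ is a complete shifted bipartition of $\{1,\dots,n\}$ and that $\mathscr{J} = \mathscr{J}(\B,\E)$.

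First I would establish $\mathscr{J}=\mathscr{J}(\B,\E)$. The inclusion $\mathscr{J}\subseteq\mathscr{J}(\B,\E)$ is immediate. For the reverse, given $K \in \mathscr{J}(\B,\E)$, pick $K_1,K_2\in\mathscr{J}$ with $b(K_1)=b(K)$ and $e(K_2)=e(K)$; since $b(K_1)\leqslant e(K_2)$, \cref{lem:maxadja}\ref{maxadjacup} yields $K=\llrr{b(K_1),e(K_2)}\in\mathscr{J}$. Next I would check the shifted disjointness $\B\cap\E[1]=\varnothing$: if some $i$ lay in the intersection, there would be $K_1,K_2\in\mathscr{J}$ with $b(K_1)=i$ and $e(K_2)=i-1$, making $K_1$ and $K_2$ adjacent and contradicting the hypothesis on $\mathscr{J}$.

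The main content is to prove completeness, i.e. $\B\cup\E[1]=\{1,\dots,n+1\}$. \cref{lem:maxadja}\ref{maxadja1n} gives $\llrr{1,n}\in\mathscr{J}$, hence $1\in\B$ and $n+1\in\E[1]$. For an arbitrary $i\in\{2,\dots,n\}$, I plan to argue by contradiction: suppose $i\notin\B$, and consider the interval $M = \llrr{1,i-1}\in\mathcal{I}_n$. An interval $K\in\mathscr{J}$ can be adjacent to $M$ only if $e(K) = b(M)-1 = 0$ (impossible) or $b(K) = e(M)+1 = i$, which is excluded by the assumption $i\notin\B$. Hence $\mathscr{J}\cup\{M\}$ is still adjacency-avoiding, so by maximality $M\in\mathscr{J}$, giving $e(M)=i-1\in\E$ and therefore $i\in\E[1]$. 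This shows every $i\in\{2,\dots,n\}$ lies in $\B\cup\E[1]$.

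The key obstacle is the completeness step, but the trick of extending a would-be gap to the anchor interval $\llrr{1,i-1}$ turns the maximality hypothesis directly into the desired membership in $\E$; the symmetric choice $\llrr{i,n}$ would work equally well if one wished to test $i-1\notin\E$ instead. Once these three facts are assembled, \cref{lem:well-placedcaract} is not needed since completeness forces $1\in\B$ and $n\in\E$, so $(\B,\E)$ is automatically effective (see \cref{rem:completrsib}), and we conclude that $\mathscr{J}=\mathscr{J}(\B,\E)$ for a complete shifted bipartition $(\B,\E)$, as required.
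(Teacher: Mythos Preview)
Your proof is correct and, in fact, cleaner than the paper's argument. Both proofs define $(\B,\E)$ from $\mathscr{J}$ and then verify the three required properties, but the constructions and verifications differ.

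The paper proceeds more constructively: it first invokes \cref{lem:maxadja}\ref{maxadjasimp} to list all singleton intervals $\llrr{m_1},\dots,\llrr{m_p}$ lying in $\mathscr{J}$, then for each consecutive pair $(m_s,m_{s+1})$ determines the maximal $a_s$ with $\llrr{m_s,a_s}\in\mathscr{J}$ and the minimal $b_s$ with $\llrr{b_s,m_{s+1}}\in\mathscr{J}$, proves $b_s\geqslant a_s+2$, and assembles $\B$ and $\E$ as explicit unions of subintervals. Completeness is then read off from this block decomposition, and the equality $\mathscr{J}=\mathscr{J}(\B,\E)$ is obtained via \cref{lem:maxadja}\ref{maxadjacup} together with the already-proved maximality of $\mathscr{J}(\B,\E)$ (\cref{prop:maxavoid}).

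Your route bypasses the singleton analysis entirely: taking $\B$ and $\E$ to be the sets of left and right endpoints makes the inclusion $\mathscr{J}\subseteq\mathscr{J}(\B,\E)$ tautological, and a single application of \cref{lem:maxadja}\ref{maxadjacup} gives the reverse inclusion. Your completeness argument via the anchor interval $\llrr{1,i-1}$ is a direct use of maximality that avoids parts \ref{maxadjacap} and \ref{maxadjasimp} of \cref{lem:maxadja} altogether. The trade-off is that the paper's proof makes the block structure of $\B$ and $\E$ explicit (which is perhaps illuminating for later sections), whereas yours is shorter and uses strictly less machinery.
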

	\begin{proof}[Proof]
		Let $\mathscr{J}$ be a maximal adjacency-avoiding subset of $\mathcal{I}_n$. By \cref{lem:maxadja} \ref{maxadja1n}, we know that $\llbracket 1; n \rrbracket \in \mathscr{J}$. Applying \cref{lem:maxadja} \ref{maxadjasimp} to $\llbracket 1; n \rrbracket$, we get that there exists $m \in \llbracket 1;n \rrbracket$ such that $\llbracket m \rrbracket  \in \mathscr{J}$. 
		Knowing that there is at least one $m \in \{1, \ldots, n \}$ such that $\llrr m \in \mathscr{J}$, assume that there are $p\in \mathbb{N}^*$ of those. We order and denote them by $1 \leqslant m_1 < m_2 < \ldots < m_p \leqslant n$. Note obviously that we cannot have $m_{s+1} = m_s + 1$ for any $s \in \{1, \ldots, p-1\}$.
		
		For $s \in \{1, \ldots, p-1\}$, let $a_s$ be the maximal index $a$ such that $m_s \leqslant a < m_{s+1}$ and such that $\llbracket m_s;a \rrbracket \in \mathscr{J}$, and $b_s$ be the minimal index $b$ such that $m_s < b \leqslant m_{s+1}$ and such that $\llbracket b;m_{s+1}\rrbracket \in \mathscr{J}$. 
		
		We show that, for all $s \in \{1, \ldots,p-1\}$, $b_s \geqslant a_s + 2$. By contradiction: \begin{enumerate}[label = $\bullet$]
			\item if $b_s = a_s+1$ then $\mathscr{J}$ is not adjacency-avoiding;
			
			\item if $b_s \leqslant a_s$, then $\llbracket b_s,a_s\rrbracket \in \mathscr{J}$ by \cref{lem:maxadja} \ref{maxadjacap}, and this implies by \cref{lem:maxadja} \ref{maxadjasimp} that we should have a $m' \in \llbracket b_s, a_s\rrbracket$ such that $\llbracket m' \rrbracket \in \mathscr{J}$. However, by construction, $m_s < m' < m_{s+1}$.
		\end{enumerate} 
		By taking 
		\begin{enumerate}[label = $\bullet$]
			\item $\B = \llrr{1,m_1} \cup \llrr{a_1+2, m_2} \cup \ldots \cup \llrr{a_{p-1}+2,m_{p}}$ and
			
			\item $\E = \llrr{m_1, a_1} \cup \llrr{m_2,a_2} \cup \ldots \cup \llrr{m_{p},n}$,
		\end{enumerate} we can easily check that $(\B,\E)$ is a complete shifted bipartition of $\{1, \ldots, n\}$. Moreover, by construction of $\mathscr{J}(\B,\E)$ and \cref{lem:maxadja} \ref{maxadjacup}, we can assert that $\mathscr{J}(\B,\E) \supseteq \mathscr{J}$. Thus $\mathscr{J}(\B,\E) = \mathscr{J}$ by \cref{prop:maxavoid}.
	\end{proof}
	
	\subsection{Interval reflections}
	\label{ss:reflk}
	
	\begin{definition}\label{def:intervrefl}
		Let $v \in \{ 1, \ldots, n \}$. The \new{interval reflection at $v$}, denoted $\refl_v$ is a function on $\mathcal{I}_n \setminus \{\llrr v \}$ defined as follows:
		$$\forall K \in \mathcal{I}_n \setminus \{\llrr v \},\ \refl_v(K) = \begin{cases}
			K \cup \{v\} & \text{if } v \notin K \text{ and } K \cup \{v\} \in \mathcal{I}_n \\
			K \setminus \{v\} & \text{if } v \in K \text{ and } K \setminus \{v\} \in \mathcal{I}_n \\
			K & \text{otherwise.}
		\end{cases}$$
	\end{definition}
	\begin{remark} \label{rem:reflkint} The interval reflection at $v$ is an involution on $\mathcal{I}_n \setminus \{\llbracket v \rrbracket\}$. 
	\end{remark}
	For all $\mathscr{J} \subseteq \mathcal{I}_n$, we denote $\refl_v(\mathscr{J})$ the interval subset made of all the intervals $\refl_v(K)$ for $K \in \mathscr{J} \setminus \{\llbracket v \rrbracket\}$. Here is a direct consequence of the definition of $\refl_v$ and \cref{prop:reflonAntypequivers}.
	\begin{cor} \label{catrefl} Let $Q$ be an $A_n$ type quiver and $v \in Q_0$ be either a source or a sink of $Q$. Consider $\mathscr{J} \subset \mathcal{I}_n$. Then the reflection functor at $v$ applied to $\Cat_Q(\mathscr{J})$ yields $\Cat_{\sigma_v(Q)}(\refl_v(\mathscr{J}))$.
	\end{cor}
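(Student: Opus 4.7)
The plan is to reduce the statement to its action on indecomposable summands, where Proposition \ref{prop:reflonAntypequivers} already supplies the relevant identification, and then to match the two piecewise formulas by case analysis.

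First I would recall that $\Cat_Q(\mathscr{J}) = \add(X_K \mid K \in \mathscr{J})$ by construction, and that the reflection functor $\mathcal{R}_v^\pm$ is additive. Consequently, its image on $\Cat_Q(\mathscr{J})$ is the full subcategory of $\rep(\sigma_v(Q))$ additively generated by the representations $\mathcal{R}_v^\pm(X_K)$ for $K$ ranging over $\mathscr{J}$. Since $\mathcal{R}_v^+(X_{\llbracket v \rrbracket}) = 0$ when $v$ is a sink, and $\mathcal{R}_v^-(X_{\llbracket v \rrbracket}) = 0$ when $v$ is a source (as noted after Proposition \ref{prop:reflonAntypequivers}), the interval $\llbracket v \rrbracket$ contributes nothing, which is precisely why $\refl_v$ is defined on $\mathcal{I}_n \setminus \{ \llbracket v \rrbracket \}$.

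The core step is then to check, for $K \in \mathcal{I}_n \setminus \{\llbracket v \rrbracket\}$, that the interval $K'$ appearing in Proposition \ref{prop:reflonAntypequivers} coincides with $\refl_v(K)$ from Definition \ref{def:intervrefl}. I would split into three cases. If $v \notin K$, then $K \cup \{v\} \in \mathcal{I}_n$ exactly when either $e(K) = v-1$ or $b(K) = v+1$, which is the first branch in both formulas; otherwise $v$ is strictly separated from $K$ and both formulas return $K$. If $v \in K$ and $K \neq \llbracket v \rrbracket$, then $K \setminus \{v\} \in \mathcal{I}_n$ iff $v \in \{b(K), e(K)\}$, matching the second branch of both formulas; and when $v$ sits strictly inside $K$, both formulas leave $K$ unchanged. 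Thus $\mathcal{R}_v^\pm(X_K) \cong X_{\refl_v(K)}$ for every such $K$.

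Putting these pieces together gives
\[
\mathcal{R}_v^\pm(\Cat_Q(\mathscr{J})) \;=\; \add\bigl(X_{\refl_v(K)} \,\big|\, K \in \mathscr{J} \setminus \{\llbracket v \rrbracket\}\bigr) \;=\; \Cat_{\sigma_v(Q)}(\refl_v(\mathscr{J})),
\]
which is exactly the claim. There is no substantive obstacle here: once one acknowledges the additivity of $\mathcal{R}_v^\pm$ and the vanishing on the simple at $v$, the statement is a direct translation of Proposition \ref{prop:reflonAntypequivers} into the language of interval sets, so the only care needed is in the routine case analysis matching the two piecewise definitions.
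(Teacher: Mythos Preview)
Your proposal is correct and follows exactly the approach the paper intends: the paper states this corollary as ``a direct consequence of the definition of $\refl_v$ and \cref{prop:reflonAntypequivers}'' without further argument, and you have simply unpacked that direct consequence by spelling out the additivity, the vanishing on $X_{\llbracket v \rrbracket}$, and the case-by-case identification of the two piecewise formulas.
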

	The result below shows that the adjacency-avoiding property is stable under $\refl_v$ for all $v \in \{1, \ldots, n\}$.
	\begin{prop}\label{prop:refladja} Let $\mathscr{J}$ be an adjacency-avoiding subset of $\mathcal{I}_n$. Then $\refl_v(\mathscr{J})$ is adjacency-avoiding. Moreover, if $\llbracket v \rrbracket \notin \mathscr{J}$, then $\mathscr{J}$ is adjacency-avoiding if and only if $\refl_v(\mathscr{J})$ is too.
	\end{prop}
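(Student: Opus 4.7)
The plan is to prove the forward direction first by a short case analysis on how the endpoints of intervals change under $\refl_v$, and then deduce the "moreover" by using that $\refl_v$ is an involution and that $\llrr v$ never appears in its image.

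\smallskip

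For the forward direction, I first would record the crucial observation that $\refl_v$ acts on endpoints in a very restricted way: the only intervals whose begin-point is altered are those with $b(K) \in \{v, v+1\}$, in which case these two values are swapped (when the result is still an interval); similarly for end-points, the values $v-1$ and $v$ are swapped. Any other interval is either fixed or not in the domain. In particular, if $\refl_v(K) = K$ is a fixed point, a quick check shows $b(K) \notin \{v, v+1\}$ and $e(K) \notin \{v-1, v\}$.

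\smallskip

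Next, suppose $K, L \in \mathscr{J}$ with $K', L' := \refl_v(K), \refl_v(L) \in \refl_v(\mathscr{J})$, and assume by contradiction that $K'$ and $L'$ are adjacent, say $b(K') = e(L')+1$. I would split into cases according to whether $b(K') = b(K)$ or not, and whether $e(L') = e(L)$ or not. The "both unchanged" case gives immediately that $K, L$ are already adjacent in $\mathscr{J}$. The "both swapped" cases yield either $(b(K), e(L)) = (v+1, v)$ or $(v, v-1)$, both producing an adjacency already in $\mathscr{J}$. The "mixed" cases (only one of $b(K'), e(L')$ changed) are ruled out because they would force $K$ or $L$ to be a fixed point with endpoint in the forbidden set $\{v-1, v, v+1\}$, contradicting the characterization of fixed points recalled above.

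\smallskip

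For the "moreover" statement, assuming $\llrr v \notin \mathscr{J}$, I would first note that $\llrr v$ is never in the image of $\refl_v$ (checking: neither adding nor removing $v$ from a nonempty interval different from $\llrr v$ can produce $\llrr v$), so $\llrr v \notin \refl_v(\mathscr{J})$ as well. Since $\refl_v$ is an involution on $\mathcal{I}_n \setminus \{\llrr v\}$ by \cref{rem:reflkint}, we have $\refl_v(\refl_v(\mathscr{J})) = \mathscr{J}$, and the forward direction applied to $\refl_v(\mathscr{J})$ yields the converse.

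\smallskip

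I don't expect any serious obstacle: the only delicate point is being careful in the case analysis, in particular verifying that the "mixed" cases really do force a forbidden fixed-point configuration, and making sure that the edge cases $K = \llrr{v-1}$, $K = \llrr{v+1}$ (which are not fixed by $\refl_v$) are correctly classified in the begin/end swap tables.
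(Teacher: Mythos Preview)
Your approach is essentially the paper's: argue by contradiction, case-analyse on how $\refl_v$ moves the two relevant endpoints, and use the involution for the ``moreover''. The paper organises the cases slightly differently---by whether $v$ equals $e(T)$, $b(U)$, or neither (where $T,U$ are the adjacent images with $b(U)=e(T)+1$)---which yields three one-line computations, but your ``changed/unchanged'' split is equivalent in spirit.

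There is one genuine imprecision to repair. In your mixed case you appeal to the fixed-point characterisation, but ``$e(L')$ unchanged'' does \emph{not} imply that $L$ is a fixed point of $\refl_v$: its begin-point may still move (e.g.\ $L=\llrr{v+1,v+5}$). The correct reason the mixed case is vacuous is already contained in your first observation, which in fact gives an equivalence: for $L\neq\llrr v$ one has $e(\refl_v(L))\neq e(L)$ \emph{if and only if} $e(L)\in\{v-1,v\}$. Now if $b(K')$ changed then $b(K')\in\{v,v+1\}$, so $e(L')=b(K')-1\in\{v-1,v\}$; since $e(L')=e(L)$ this puts $e(L)$ in the swap range and forces $e(L')$ to have changed after all, a contradiction. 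Replace the fixed-point sentence by this (and drop the fixed-point remark entirely---it is never needed) and your proof is complete.
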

	\begin{proof}[Proof]
		Let $\mathscr{J}$ be an adjacency-avoiding interval subset. Suppose that $\refl_v(\mathscr{J})$ is not adjacency-avoiding. We thus have two adjacent intervals $T$ and $U$ in $\refl_v(\mathscr{J})$. Let us say that $b(U) = e(T) + 1$ without loss of generality. 
		
		By definition, let $K,L \in \mathscr{J} \setminus \{\llbracket v \rrbracket\}$ such that $\refl_v(K) = T$ and $\refl_v(L) = U$. By involution, we get $K = \refl_v(T)$ and $L = \refl_v(U)$. Now,
		\begin{enumerate}[label = $\bullet$]
			\item if $v \notin \{e(T), b(U)\}$, then $b(L) = b(U) = e(T)+1 = e(K)+1$;
			
			\item if $v = e(T)$, then $b(L) = b(U) - 1 = e(T) = e(K)+1$;
			
			\item if $v = b(U)$, then $b(L) = b(U)+1 = e(T)+2 = e(K)+1$.
		\end{enumerate}
		In all cases, we find that $K$ and $L$ are adjacent, which is a contradiction. So $\refl_v(\mathscr{J})$ is adjacency-avoiding.
		
		If $\llbracket v \rrbracket \notin \mathscr{J}$, then $\refl_v(\refl_v(\mathscr{J})) = \mathscr{J}$ and we are done.
	\end{proof}
	It seems natural to ask how the maximal adjacency-avoiding interval subsets behave under this action. The example below must motivate us to define an action on effective shifted interval bipartitions by showing that we can describe the image under $\refl_v$ in terms of another effective shifted interval bipartition. 
	\begin{ex}\label{ex:motivate} Let $n = 6$,
		$\B = \{2,4\}$ and $\E = \{2,4,5\}$. We have $\mathscr{J} = \mathscr{J}(\B,\E) = \{\llrr{2}, \llrr{2,4}, \llrr{2,5}, \llrr{4}, \llrr{4,5}\}$. Then we get
		$$\refl_3(\mathscr{J}) = \{\llrr{2,3}, \llrr{2,4}, \llrr{2,5}, \llrr{3,4}, \llrr{3,5}\} = \mathscr{J}(\B', \E') \setminus \{\llrr{3}\}$$ with $\B' = \{2,3\} = (\B \cup \{3\}) \setminus \{4\}$ and $\E' = \{3,4,5\} = (\E \cup \{3\}) \setminus \{2\}$.
	\end{ex}
	Before defining the \emph{toggle action}, let us introduce the \emph{completion of an effective shifted interval bipartition} via \emph{extended shifted bipartitions}.
	\begin{definition} \label{def:extendedshtbipart}
		An \new{extended shifted bipartition} of $\{1, \ldots, n\}$ is a pair $(\C,\F)$ such that $\C \subseteq \{1, \ldots, n+1\}$, $\F \subseteq \{0, \ldots, n\}$ and $\{\C, \F[1]\}$ is a bipartition of $\{1, \ldots, n+1\}$.
	\end{definition}
	\begin{prop} \label{propdef:completion} For any pair $(\B,\E)$, with $\B \neq \varnothing \neq \E$, forming an interval shifted bipartition of $\{1, \ldots, n\}$, there exists a unique extended shifted bipartition $(\overline{\B}, \overline{\E})$ of $\{1, \ldots, n\}$ such that $\mathscr{J}(\B,\E) = \mathscr{J}(\overline{\B}, \overline{\E} )$.
		
		We call $(\overline{\B}, \overline{\E})$ the \new{completion of $(\B, \E)$}.
	\end{prop}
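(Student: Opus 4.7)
The plan is to explicitly construct the completion, verify it represents the same interval set, and then show no other extended shifted bipartition will do.

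First I would write $\B \cup \E[1] = \llrr{p,q}$ for some $1 \leqslant p \leqslant q \leqslant n+1$ (the union is a nonempty interval since $\B$ and $\E$ are both nonempty). Effectiveness of $(\B,\E)$ together with $\B, \E \subseteq \{1,\ldots, n\}$ forces $p \leqslant \min(\B) \leqslant \max(\E) \leqslant q-1$. The candidate is then
\[
\overline{\B} := \B \cup \{q+1, q+2, \ldots, n+1\}, \qquad \overline{\E} := \E \cup \{0, 1, \ldots, p-2\},
\]
with the understanding that one or both of the added sets is empty when $q = n+1$ or $p = 1$. A direct computation gives $\overline{\B} \cup \overline{\E}[1] = \{1, \ldots, n+1\}$, and disjointness of $\overline{\B}$ and $\overline{\E}[1]$ follows from $\B \cap \E[1] = \varnothing$ together with the evident pairwise disjointness of the three blocks $\{1,\ldots, p-1\}$, $\{p,\ldots,q\}$, $\{q+1, \ldots, n+1\}$; so $(\overline{\B}, \overline{\E})$ is an extended shifted bipartition.

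To show $\mathscr{J}(\overline{\B}, \overline{\E}) = \mathscr{J}(\B, \E)$, the inclusion $\supseteq$ is immediate. For the reverse, any $K \in \mathscr{J}(\overline{\B}, \overline{\E})$ with $b(K) \in \{q+1, \ldots, n+1\}$ would require some $e(K) \in \overline{\E}$ with $e(K) \geqslant b(K) \geqslant q+1$, but $\overline{\E}$ is bounded above by $\max(\E) \leqslant q-1$, which is impossible. A symmetric argument rules out $e(K) \in \{0, \ldots, p-2\}$, so $b(K) \in \B$ and $e(K) \in \E$, i.e.\ $K \in \mathscr{J}(\B, \E)$.

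For uniqueness, suppose $(\C,\F)$ is another extended shifted bipartition with $\mathscr{J}(\C, \F) = \mathscr{J}(\B, \E)$. By \cref{lem:well-placedcaract} its effective subpair is exactly $(\B, \E)$; thus $\B \subseteq \C$, $\E \subseteq \F$, every $c \in \C \setminus \B$ satisfies $c > \max(\F)$, and every $f \in \F \setminus \E$ satisfies $f < \min(\C)$. If an element $k \in \{1, \ldots, p-1\}$ were placed in $\C$, then one would have $k > \max(\F) \geqslant \max(\E) \geqslant \min(\B) \geqslant p$, contradicting $k \leqslant p-1$; hence $k \in \F[1]$, giving $\{0, \ldots, p-2\} \subseteq \F$. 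A symmetric argument gives $\{q+1, \ldots, n+1\} \subseteq \C$. Since $\{\C, \F[1]\}$ partitions $\{1, \ldots, n+1\}$, these forced inclusions together with $\B \subseteq \C$ and $\E \subseteq \F$ determine $(\C, \F) = (\overline{\B}, \overline{\E})$. The main obstacle is purely notational: carefully tracking the degenerate cases $p=1$ and $q=n+1$ where one of the added blocks is empty, and double-checking that ineffective elements really must lie on the correct side; no representation-theoretic input is needed and the argument reduces to clean combinatorial bookkeeping.
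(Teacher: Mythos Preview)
Your construction is exactly the paper's: with $\B \cup \E[1] = \llrr{p,q}$, the paper sets $\overline{\B} = \B \cup \{i \mid i > q\}$ and $\overline{\E} = \E \cup \{i-1 \mid i < p\}$, matching your formulas, and then declares the verification ``easy to check'' and uniqueness ``clear''; you are simply supplying those details.

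One wrinkle worth flagging: you invoke effectiveness of $(\B,\E)$, but the stated hypothesis (shifted interval bipartition with $\B,\E \neq \varnothing$) does not force it --- take $\B=\{3\}$, $\E=\{1\}$ with $n=4$. In such a non-effective example $\mathscr{J}(\B,\E)=\varnothing$ and several extended shifted bipartitions realise the empty interval set, so uniqueness genuinely fails and the proposition as literally stated is slightly loose. Your uniqueness argument (identifying the effective subpair of $(\C,\F)$ as $(\B,\E)$ via \cref{lem:well-placedcaract}, and the chain using $\max(\E)\geqslant\min(\B)$) is correct precisely under this added effectiveness assumption, which is also the only case the paper needs afterwards (\cref{cor:bijeffshintbi}, \cref{def:toggleeff}). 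The existence half, by contrast, needs only $\B,\E\neq\varnothing$: the inequalities $p\leqslant\min(\B)$ and $\max(\E)\leqslant q-1$ you use there follow from nonemptiness alone, without effectiveness.
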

	\begin{remark} \label{rem:emptycompletion} Note that if either $\B = \varnothing$ and $n \notin \E$, or $\E = \varnothing$ and $1 \notin \B$, then the completion is not unique. For instance, $(\varnothing, \varnothing)$ admits $n+2$ different completions. As this pair will remain important throughout this section, we introduce notation for its completions. Let, for $0 \leqslant m \leqslant n+1$, $\overline{\varnothing}_m = (\{m+1, \ldots, n+1\},\{0,\ldots, m-1\})$ be the \new{$m$th completion of $(\varnothing, \varnothing)$}.
	\end{remark}
	\begin{proof}[Proof of \cref{propdef:completion}]
		Let $(\B,\E)$ as assumed. Then let:
		\begin{enumerate}[label = $\bullet$]
			\item $\overline{\B} = \B \cup \{i \mid i > \max(\B \cup \E[1])\}$;
			
			\item $\overline{\E} = \E \cup \{i-1 \mid i < \min(\B \cup \E[1])\}$.
		\end{enumerate}
		It is easy to check that $(\overline{\B}, \overline{\E})$ is an extended shifted bipartition of $\{1, \ldots, n\}$ such that $\mathscr{J}(\B,\E) = \mathscr{J}(\overline{\B}, \overline{\E})$. It is also clear that this extended shifted bipartition is the unique one satisfying the desired properties.
	\end{proof}
	\begin{cor}\label{cor:bijeffshintbi}
		The map $(\B,\E) \longmapsto (\overline{\B}, \overline{\E})$ gives a bijection from pairs $(\B,\E)$ of nonempty subsets of $\{1, \ldots, n\}$ forming effective shifted interval bipartitions, and pairs $(\mathbf{C},\mathbf{F})$ forming extended shifted bipartitions of $\{1, \ldots, n\}$ such that $\mathscr{J}(\mathbf{C}, \mathbf{F}) \neq \varnothing$.
	\end{cor}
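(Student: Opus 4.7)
The strategy is to verify that the completion map $\Phi : (\B, \E) \longmapsto (\overline{\B}, \overline{\E})$ introduced in \cref{propdef:completion} restricts to a bijection between the two families described.

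First, I will confirm well-definedness. \cref{propdef:completion} already provides that $(\overline{\B}, \overline{\E})$ is an extended shifted bipartition of $\{1, \ldots, n\}$ and that $\mathscr{J}(\overline{\B}, \overline{\E}) = \mathscr{J}(\B, \E)$, so it only remains to check $\mathscr{J}(\B, \E) \neq \varnothing$. This is immediate from effectiveness: since $\B \neq \varnothing$, pick any $b \in \B$, and effectiveness yields an $e \in \E$ with $b \leqslant e$, whence $\llrr{b, e} \in \mathscr{J}(\B, \E)$. Injectivity then follows directly from \cref{lem:well-placedcaract}: if two effective shifted interval bipartitions share a completion, they produce the same $\mathscr{J}$ and hence must coincide.

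The main content is surjectivity. Given an extended shifted bipartition $(\C, \F)$ of $\{1, \ldots, n\}$ with $\mathscr{J}(\C, \F) \neq \varnothing$, I plan to set $m := \min \C$ and $M := \max \F$ (both well-defined, with $m \leqslant M$), and to take as candidate preimage $\B := \C \cap \llrr{1, M}$ and $\E := \F \cap \llrr{m, n}$. A short verification gives $\mathscr{J}(\B, \E) = \mathscr{J}(\C, \F)$, together with effectiveness and nonemptiness of both $\B$ and $\E$. The crucial step is the identification $\B \cup \E[1] = \llrr{m, M+1}$, which will show that $(\B, \E)$ is a genuine shifted interval bipartition. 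For this I would exploit that $\{\C, \F[1]\}$ partitions $\{1, \ldots, n+1\}$: the minimality of $m$ forces $\llrr{1, m-1} \subseteq \F[1]$, and the maximality of $M$ forces $\llrr{M+2, n+1} \subseteq \C$. Since these ranges are precisely the pieces removed when passing from $(\C, \F)$ to $(\B, \E)$, their complement inside $\C \cup \F[1] = \{1, \ldots, n+1\}$ is the interval $\llrr{m, M+1}$. Feeding this into the explicit formulas from the proof of \cref{propdef:completion} then yields $\overline{\B} = \B \cup \llrr{M+2, n+1} = \C$ and $\overline{\E} = \E \cup \llrr{0, m-2} = \F$, so $\Phi(\B, \E) = (\C, \F)$.

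The only mildly delicate point is the combinatorial identification $\B \cup \E[1] = \llrr{m, M+1}$; no deep idea is required, but the argument must carefully use that $\{\C, \F[1]\}$ covers all of $\{1, \ldots, n+1\}$ in order to see that the trimmed parts of $\C$ and $\F$ coincide exactly with the corresponding complementary ranges of $\F[1]$ and $\C$ respectively.
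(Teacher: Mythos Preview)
Your proposal is correct and follows essentially the same approach as the paper. The paper's own proof is a one-line appeal to \cref{lem:well-placedcaract} and \cref{propdef:completion}; your argument is a careful unpacking of exactly this, making explicit for surjectivity why the effective pair extracted from an extended shifted bipartition $(\C,\F)$ with $\mathscr{J}(\C,\F)\neq\varnothing$ is again a shifted \emph{interval} bipartition (via the identification $\B\cup\E[1]=\llrr{m,M+1}$), a point the paper leaves to the reader.
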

	\begin{proof}[Proof]
		This result is a direct consequence of \cref{lem:well-placedcaract} and \cref{propdef:completion}.
	\end{proof} 
	For any pair $(\mathbf{C}, \mathbf{F})$ such that $\mathbf{C} \subseteq \{1, \ldots, n+1\}$ and $\mathbf{F} \subseteq \{0, \ldots, n\}$, denote $\eff(\mathbf{C}, \mathbf{F})$ the unique effective shifted interval bipartition of $\{1, \ldots, n\}$ such that $\mathscr{J}(\mathbf{C}, \mathbf{F}) = \mathscr{J}(\eff(\mathbf{C}, \mathbf{F}))$. 
	
	In the next example, we extend our observations from \cref{ex:motivate} to all other reflections we can perform. We will see in \cref{prop:maxadjareflnicestext} that this is exactly the case.
	\begin{ex}
		\label{ex:motivate2} Let $n = 6$,
		$\B = \{2,4\}$ and $\E = \{2,4,5\}$. So $\overline{\B} = \{2,4,7\}$ and
		$\overline{\E} = \{0,2,4,5\}$.  Let $\mathscr{J} = \mathscr{J}(\B,\E) = \{\llrr{2}, \llrr{2,4}, \llrr{2,5}, \llrr{4}, \llrr{4,5}\}$.  Then,
		\begin{enumerate}[label = $\bullet$,itemsep=1mm]
			\item $\refl_1(\mathscr{J}) =  \{\llrr{1,2}, \llrr{1,4}, \llrr{1,5}, \llrr{4}, \llrr{4,5}\} = \mathscr{J}(\B', \E') \setminus \{\llrr{1}\}$ with:
			\begin{enumerate}[label = $\bullet$,itemsep=0.5mm]
				\item $\overline{\B'} = \{1,4,7\} = (\overline{\B} \cup \{1\})\setminus \{2\}$ 
				
				\item $\overline{\E'} = \{1,2,4,5\} = (\overline{\E} \cup \{1\}) \setminus \{0\}$;  
			\end{enumerate}
			\item $\refl_2(\mathscr{J}) = \{\llrr{3,4}, \llrr{3,5}, \llrr{4}, \llrr{4,5} = \mathscr{J}(\B', \E')$ with:
			\begin{enumerate}[label = $\bullet$,itemsep=0.5mm]
				
				\item $\overline{\B'} = \{3,4,7\} = (\overline{\B} \cup \{3\}) \setminus \{2\}$
				
				\item $\overline{\E'} = \{0,1,4,5\} = (\overline{\E} \cup \{1\}) \setminus \{2\}$;
			\end{enumerate}
			\item $\refl_3(\mathscr{J}) = \{\llrr{2,3}, \llrr{2,4}, \llrr{2,5}, \llrr{3,4}, \llrr{3,5}\} = \mathscr{J}(\B', \E') \setminus \{\llrr{3}\}$ with:
			\begin{enumerate}[label = $\bullet$,itemsep=0.5mm]
				\item $\overline{\B'}= \{2,3,7\} = (\overline{\B} \cup \{3\}) \setminus \{4\}$ 
				\item $\overline{\E'} = \{0,3,4,5\} = (\overline{\E} \cup \{3\}) \setminus \{2\}$. \qedhere
			\end{enumerate}
		\end{enumerate}
	\end{ex}
	\begin{definition} \label{def:toggleext}
		Let $(\mathbf{C},\mathbf{F})$ be an extended shifted bipartition of $\{1,\ldots,n\}$. Let $v \in \{1, \ldots, n\}$. We define \new{$\tog_v(\mathbf{C},\mathbf{F})$ the toggle at $v$ of $(\mathbf{C},\mathbf{F})$} as the pair $(\mathbf{C}', \mathbf{F}')$ where:
		\begin{enumerate}[label = $\bullet$]
			\item $\displaystyle \mathbf{C}' = \begin{cases}
				(\mathbf{C} \cup \{v\})\setminus \{v+1\} & \text{if } v \notin \mathbf{C} \cup \mathbf{F} \\
				(\mathbf{C} \cup \{v+1\}) \setminus \{v\} & \text{if } v \in \mathbf{C} \cap \mathbf{F} \\
				\mathbf{C} & \text{otherwise};
			\end{cases}$
			
			\item $\displaystyle \mathbf{F}' = \begin{cases}
				(\mathbf{F} \cup \{v\})\setminus \{v-1\} & \text{if } v \notin \mathbf{C} \cup \mathbf{F} \\
				(\mathbf{F} \cup \{v-1\}) \setminus \{v\} & \text{if } v \in \mathbf{C} \cap \mathbf{F} \\
				\mathbf{F} & \text{otherwise}.
			\end{cases}$
		\end{enumerate}
	\end{definition}
	\begin{remark} \label{rem:rev} Note that, by construction, $\rev(\tog_v(\C,\F)) = \tog_{n+1-v}(\F^{\rev}, \C^{\rev})$.
	\end{remark}
	The following interpretation of the application of $\tog_v$ follows directly from \cref{def:toggleext}.
	\begin{lemma} \label{lem:exchangev} Let $(\C,\F)$ be an extended shifted bipartition of $\{1, \ldots, n\}$. Consider $(\C', \F') = \tog_v(\C,\F)$. Then $\C'$ is defined from $\C$ by replacing $v$ by $v-1$ and by replacing $v-1$ by $v$, and $\F'$ is defined from $\F$ by replacing $v$ by $v+1$ and by replacing $v+1$ by $v$.
	\end{lemma}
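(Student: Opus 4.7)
The proof is a routine case analysis against the three clauses in the definition of $\tog_v$, driven by the bipartition property. I would first unpack the structural input: since $\C \sqcup \F[1] = \{1, \ldots, n+1\}$, for every $k \in \{1, \ldots, n+1\}$ exactly one of $k \in \C$ and $k-1 \in \F$ holds. Specializing this dichotomy at $k = v$ and at $k = v+1$ pins down the membership of the neighbours of $v$: one obtains $v \in \C \iff v-1 \notin \F$ and $v+1 \in \C \iff v \notin \F$. Together, these equivalences determine the four truth values for the symbols in $\{v-1 \in \F,\, v \in \F,\, v \in \C,\, v+1 \in \C\}$ from the two truth values $(v \in \C,\, v \in \F)$.

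Next, I would split according to the three cases of $\tog_v$. When $v \notin \C \cup \F$, the equivalences force $v+1 \in \C$ and $v-1 \in \F$, so the formula $\C' = (\C \cup \{v\}) \setminus \{v+1\}$ literally exchanges the two adjacent slots inside $\C$ (an absence becomes a presence and vice versa), and $\F' = (\F \cup \{v\}) \setminus \{v-1\}$ literally exchanges the corresponding two slots inside $\F$. When $v \in \C \cap \F$, the equivalences instead give $v+1 \notin \C$ and $v-1 \notin \F$, and the formulas $\C' = (\C \cup \{v+1\}) \setminus \{v\}$ and $\F' = (\F \cup \{v-1\}) \setminus \{v\}$ effect the same swaps in the opposite direction.

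In the remaining sub-cases, exactly one of $v \in \C$ and $v \in \F$ holds; the equivalences then show that $v$ and its swap-partner on each side have matching membership status (both present or both absent), so the nominal exchange operation described in the statement acts trivially, in agreement with the ``otherwise'' clause $\C' = \C$, $\F' = \F$. Assembling the three cases yields the claim.

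There is no serious obstacle in this proof: all the content is the bipartition equivalences, and once those are extracted the three cases verify the statement mechanically. The only point requiring attention is the asymmetry between the $\C$-side and the $\F$-side coming from the shift by one in $\F[1]$, which one must respect when identifying the swap-partner of $v$ on each side.
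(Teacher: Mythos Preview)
Your case analysis is correct and is exactly the routine verification that the paper gestures at: the paper gives no argument beyond the sentence ``The following interpretation of the application of $\tog_v$ follows directly from Definition~\ref{def:toggleext}.'' Your unpacking of the bipartition dichotomy at $k=v$ and $k=v+1$ is the natural way to make this explicit.

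One point worth flagging: your argument (correctly, following the definition) shows that $\mathbf{C}'$ is obtained from $\mathbf{C}$ by exchanging the roles of $v$ and $v{+}1$, and $\mathbf{F}'$ from $\mathbf{F}$ by exchanging $v$ and $v{-}1$. The lemma as printed states the swaps the other way around ($v \leftrightarrow v{-}1$ on the $\mathbf{C}$-side, $v \leftrightarrow v{+}1$ on the $\mathbf{F}$-side), which appears to be a typo; the subsequent application of the lemma in the proof of Proposition~\ref{prop:maxadjareflnicestext} relies on the version you actually proved, since $\refl_v$ moves $b(K)$ between $v$ and $v{+}1$ and $e(K)$ between $v$ and $v{-}1$.
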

	The following proposition follows immediately from \cref{lem:exchangev}.
	\begin{prop}
		Let $(\mathbf{C},\mathbf{F})$ be an extended shifted bipartition of $\{1, \ldots, n\}$. For all $v \in \{1, \ldots, n\}$, $\tog_v(\mathbf{C}, \mathbf{F})$ is also an extended shifted bipartition of $\{1, \ldots, n\}$.
	\end{prop}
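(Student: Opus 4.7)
The plan is a direct case analysis on the three branches of the definition of $\tog_v$. Writing $(\C', \F') = \tog_v(\C, \F)$, the inclusions $\C' \subseteq \{1, \ldots, n+1\}$ and $\F' \subseteq \{0, \ldots, n\}$ are immediate in every case, because $v \in \{1, \ldots, n\}$ forces $v-1 \in \{0, \ldots, n\}$ and $v+1 \in \{1, \ldots, n+1\}$. Thus the substance of the proof is checking that $\{\C', \F'[1]\}$ still partitions $\{1, \ldots, n+1\}$.

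The key observation I will use is that the bipartition assumption on $(\C, \F)$ yields the equivalence $i \in \C \iff i - 1 \notin \F$ for every $i \in \{1, \ldots, n+1\}$. Applying this at $i = v$ and $i = v+1$ pins down the memberships of $v-1, v, v+1$ in $\C$ and $\F$ in each case. In Case 1 ($v \notin \C \cup \F$) it forces $v-1 \in \F$ and $v+1 \in \C$; in Case 2 ($v \in \C \cap \F$) it forces $v-1 \notin \F$ and $v+1 \notin \C$; Case 3 is the remaining ``otherwise'' branch.

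I will then read off the effect of $\tog_v$ directly. In Case 1 the formula $\C' = (\C \cup \{v\}) \setminus \{v+1\}$ amounts precisely to swapping the memberships of $v$ and $v+1$ in $\C$ (using the derived $v \notin \C,\ v+1 \in \C$), while the companion change to $\F$ translates into exactly the same swap of $v$ and $v+1$ in $\F[1]$ via the shift. Since the identical swap is performed in both halves, $\{\C', \F'[1]\}$ remains a bipartition of $\{1, \ldots, n+1\}$. Case 2 is identical in form, with the swap running the other way. In Case 3 we have $(\C', \F') = (\C, \F)$ and nothing requires checking. I do not foresee a real obstacle: the only care required is keeping track of boundary behavior at $v = 1$ and $v = n$, and this is absorbed automatically by the inclusions $\F \subseteq \{0, \ldots, n\}$ and $\C \subseteq \{1, \ldots, n+1\}$, which allow $0$ and $n+1$ to appear (or not) as legitimate elements of the sides being swapped.
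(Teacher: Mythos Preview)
Your proposal is correct and follows the same approach as the paper: the paper simply invokes \cref{lem:exchangev}, which says that $\tog_v$ acts as the transposition of $v$ and $v+1$ on both $\C$ and $\F[1]$, and this is precisely the swap you establish via the case analysis. Your write-up just unpacks that lemma and the bipartition-preservation step explicitly.
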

	Thus, the toggles on the extended shifted bipartitions of $\{1,\ldots,n\}$ induce toggle operations on the effective shifted interval bipartitions.
	\begin{definition} \label{def:toggleeff}
		Let $(\B,\E)$ be an effective shifted interval bipartition of the set 
		$\{1, \ldots, n\}$, and $v \in \{1, \ldots, v\}$. We define \new{$\tog_v(\B,\E)$ the toggle at $v$ of $(\B, \E)$} by:
		$$\tog_v(\B,\E) = \begin{cases}
			\eff(\tog_v(\overline{\varnothing}^n_v)) & \text{if } (\B,\E) = (\varnothing, \varnothing);\\
			\eff(\tog_v(\compl_n(\B,\E))) & \text{otherwise.}
		\end{cases}$$
	\end{definition}
	\begin{remark} \label{rem:explicitog} We can describe $\tog_v$ explicitly on effective shifted interval bipartitions of $\{1, \ldots, n\}$:  we have $\tog_v(\B,\E) = (\B', \E')$ where:
		\begin{enumerate}[label = $\bullet$]
			\item $\displaystyle \B' = \begin{cases}
				(\B \cup \{v\}) \setminus \{v+1\} & \text{if } v \notin \B\cup\E \text{ and } v+1 \in \B \\
				\B \cup \{v\} &  \begin{matrix} \text{if } v \notin \B\cup\E,\ v+1 \notin \B  \text{ and either } v-1 \in \E \\
					\text{or } \B = \varnothing \hfill \end{matrix} \\
				(\B \cup \{v + 1\}) \setminus \{v\} & \begin{matrix}\text{if } v \in \B \cap \E \text{ and either } v+1 \in \E \\ \text{or } v+2 \in \B \hfill \end{matrix}\\
				\B  \setminus \{v\} & \text{if } v \in \B \cap \E,\ v+1 \notin \E \text{ and }  v+2 \notin \B \\
				\B & \text{otherwise;}
			\end{cases}$
			
			\item $\displaystyle \E' = \begin{cases}
				(\E \cup \{v\}) \setminus \{v-1\} & \text{if } v \notin \B\cup\E  \text{ and } v-1 \in \E \\
				\E \cup \{v\} & \begin{matrix}\text{if } v \notin \B\cup\E,\ v-1 \notin \E  \text{ and either } v+1 \in \B 
					\\
					\text{or } \E = \varnothing  \hfill \end{matrix} \\
				(\E \cup \{ v-1\}) \setminus \{v\} & \begin{matrix}\text{if } v \in \B \cap \E \text{ and either } v-1 \in \B \\
					\text{or } v-2 \in \E \hfill \end{matrix} \\
				\E \setminus \{v\} & \text{if } v \in \B \cap \E,\ v-1 \notin \B \text{ and }  v-2 \notin \E\\
				\E & \text{otherwise.}
			\end{cases}$
		\end{enumerate}
		However, \cref{def:toggleeff} is easier to handle than this explicit description. 
	\end{remark}
	\begin{prop}\label{prop:maxadjareflnicestext}
		Let $(\C,\F)$ be an extended shifted bipartition of $\{1, \ldots, n\}$, and $v \in \{1, \ldots, n\}$. Write $\mathscr{J} = \mathscr{J}(\C,\F)$ and $\mathscr{T} = \mathscr{J}(\tog_v(\C,\F))$. We get the following results:
		\begin{enumerate}[label = \arabic*)]
			\item if $v \notin \C \cup \F$, then $\mathscr{T} = \refl_v(\mathscr{J}) \cup \{\llrr v\}$.
			
			\item otherwise, $\mathscr{T} = \refl_v(\mathscr{J})$.
		\end{enumerate}
	\end{prop}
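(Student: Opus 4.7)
The plan is to compute both $\mathscr{T}$ and $\refl_v(\mathscr{J})$ set-theoretically from the definitions and match them. The key algebraic input is that $\{\C,\F[1]\}$ is a partition of $\{1,\ldots,n+1\}$: this yields $v\in\C$ iff $v-1\notin\F$, and $v+1\in\C$ iff $v\notin\F$, which I will use to pin down the membership of $v$, $v-1$ and $v+1$ in $\C$ and $\F$ in every case. The other input is \cref{def:toggleext}, giving explicit formulas for $\C'$ and $\F'$ in each of the three regimes $v\notin\C\cup\F$, $v\in\C\cap\F$, and otherwise.

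For Case 1 ($v\notin\C\cup\F$), the bipartition forces $v-1\in\F$ and $v+1\in\C$, so in particular $\llrr v\notin\mathscr{J}$. I would partition $\mathscr{J}$ by the position of $v$ relative to $K$: Type 1 has $b(K)<v<e(K)$; Type 2a has $K=\llrr{v+1,e(K)}$; Type 2b has $K=\llrr{b(K),v-1}$; and Type 3 collects the remaining intervals with $v\notin K$, $b(K)\neq v+1$, $e(K)\neq v-1$. From \cref{def:intervrefl}, $\refl_v$ fixes Types 1 and 3, sends Type 2a to $\llrr{v,e(K)}$ and Type 2b to $\llrr{b(K),v}$. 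On the other side, the relations $\C'=(\C\setminus\{v+1\})\cup\{v\}$ and $\F'=(\F\setminus\{v-1\})\cup\{v\}$ split $\mathscr{T}$ into four pieces according to whether $b(K)\in\C\setminus\{v+1\}$ or $b(K)=v$, and similarly for $e(K)$. Matching the four pieces to the four types above and identifying the extra corner $b(K)=e(K)=v$ as $\{\llrr v\}$ yields $\mathscr{T}=\refl_v(\mathscr{J})\cup\{\llrr v\}$.

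For Case 2 I split further. If $v\in\C\cap\F$, the bipartition forces $v+1\notin\C$ and $v-1\notin\F$, and $\llrr v\in\mathscr{J}$ is removed from the domain of $\refl_v$ by convention; dually $v\notin\C'\cap\F'$ shows $\llrr v\notin\mathscr{T}$. The same type-by-type bookkeeping as in Case 1, with $\C'$ and $\F'$ now obtained by the opposite swap $v\mapsto v\pm 1$, matches the remaining pieces. If instead $v$ lies in exactly one of $\C,\F$, then \cref{def:toggleext} gives $\tog_v(\C,\F)=(\C,\F)$, hence $\mathscr{T}=\mathscr{J}$, and it remains to check $\refl_v(\mathscr{J})=\mathscr{J}$. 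I would prove one inclusion $\refl_v(\mathscr{J})\subseteq\mathscr{J}$ by going through the cases of \cref{def:intervrefl} and using the constraints imposed on $v\pm1$ by the bipartition (e.g.\ if $v\in\C\setminus\F$ then $v+1\in\C$, so the image of a Type 2a interval stays in $\mathscr{J}$), and then invoke the involution property from \cref{rem:reflkint} to conclude equality.

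The work is essentially a careful case distinction rather than a genuine obstacle; the delicate point is not to lose or duplicate intervals at the boundary $b(K)=v$ or $e(K)=v$ when passing between $(\C,\F)$ and $(\C',\F')$. Edge cases $v=1$ and $v=n$ are absorbed uniformly because $\F\subseteq\{0,\ldots,n\}$ and $\C\subseteq\{1,\ldots,n+1\}$ allow $v-1=0$ and $v+1=n+1$ with no special treatment.
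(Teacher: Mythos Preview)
Your proposal is correct and follows essentially the same approach as the paper: determine when $\llrr v$ lies in $\mathscr{T}$, then show that the remaining intervals of $\mathscr{T}$ are exactly the $\refl_v$-images of intervals in $\mathscr{J}$. The only difference is packaging: the paper isolates the observation that $\C'$ (resp.\ $\F'$) is obtained from $\C$ (resp.\ $\F$) by swapping $v$ with $v+1$ (resp.\ $v-1$) into \cref{lem:exchangev}, and then the equivalence $b(K')\in\C',\ e(K')\in\F' \Longleftrightarrow K'=\refl_v(K)$ for some $K\in\mathscr{J}$ (with $K'\neq\llrr v$) follows in one line, whereas you carry out the same bookkeeping explicitly via your Type 1/2a/2b/3 decomposition.
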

	\begin{proof}[Proof]
		Let $(\C,\F)$ and $v$ be as assumed. Write $(\C',\F') = \tog_v(\C,\F)$. It is obvious that $\llrr v \in \mathscr{J}(\C',\F')$ if and only if $\C \cap \{v,v+1\} = \{v+1\}$ and $\F \cap \{v,v-1\} = \{v-1\}$, which is equivalent to $v \notin \C \cup \F$ as claimed.
		
		Let $K' \in \mathcal{I}_n$. By definition, $K' \in \mathscr{J}(\C', \F')$ whenever $b(K') \in \C'$ and $e(K') \in \F'$. As we already treated this case, assume that $K' \neq \llrr v$. By \cref{lem:exchangev}, the conditions $b(K') \in \C'$ and $e(K') \in \F'$ is equivalent to saying that $K \in \mathscr{J}(\C,\F)$ and $K' = \refl_v(K)$. This completes the proof.
	\end{proof}
	\begin{cor}\label{prop:maxadjareflnicest}
		Let $(\B,\E)$ be an effective shifted interval partition of $\{1, \ldots, n\}$ and $v \in \{1, \ldots, n\}$. Write $\mathscr{J} = \mathscr{J}(\B,\E)$ and $\mathscr{T} = \mathscr{J}(\tog_v(\B,\E))$. We get the following results:
		\begin{enumerate}[label = \arabic*)]
			
			\item if $v \notin \B \cup \E$, and, $v-1 \in \E$ or $v+1 \in \B$, then $\mathscr{T} = \refl_v(\mathscr{J}) \cup \{\llrr v\}$.
			
			\item otherwise, $\mathscr{T} = \refl_v(\mathscr{J})$.
		\end{enumerate}
	\end{cor}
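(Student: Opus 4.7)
The plan is to deduce this corollary directly from \cref{prop:maxadjareflnicestext} by pulling $(\B, \E)$ up to its completion, and then translating the condition ``$v \notin \mathbf{C} \cup \mathbf{F}$'' on the extended side back to a condition on $\B$ and $\E$. Concretely, I would first dispatch the degenerate case $(\B,\E) = (\varnothing, \varnothing)$: here $\mathscr{J} = \varnothing$, and by \cref{def:toggleeff} one has $\mathscr{T} = \mathscr{J}(\tog_v(\overline{\varnothing}^n_v))$, where the chosen completion satisfies $v \in \mathbf{C} \cup \mathbf{F}$; so \cref{prop:maxadjareflnicestext} gives $\mathscr{T} = \refl_v(\varnothing) = \varnothing$, which agrees with case 2 of the corollary since case 1's extra hypothesis is vacuously false.

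Assume now $(\B, \E) \neq (\varnothing, \varnothing)$. By \cref{propdef:completion} and \cref{cor:bijeffshintbi}, the completion $(\overline{\B}, \overline{\E}) = \compl_n(\B, \E)$ is the unique extended shifted bipartition satisfying $\mathscr{J}(\B, \E) = \mathscr{J}(\overline{\B}, \overline{\E})$. By \cref{def:toggleeff} and the definition of $\eff$, one has $\mathscr{J}(\tog_v(\B, \E)) = \mathscr{J}(\tog_v(\overline{\B}, \overline{\E}))$. So \cref{prop:maxadjareflnicestext} applied to $(\overline{\B}, \overline{\E})$ tells us that $\mathscr{T} = \refl_v(\mathscr{J}) \cup \{\llrr v\}$ precisely when $v \notin \overline{\B} \cup \overline{\E}$, and $\mathscr{T} = \refl_v(\mathscr{J})$ otherwise.

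The main (and essentially only) step is the translation of the membership condition. Because $(\B, \E)$ is a nonempty effective shifted interval bipartition, $\B \cup \E[1]$ is a nonempty interval $\llrr{p, q}$. Reading off the proof of \cref{propdef:completion}, $\overline{\B} = \B \cup \llrr{q+1, n+1}$ and $\overline{\E} = \E \cup \llrr{0, p-2}$, so
\[
v \notin \overline{\B} \cup \overline{\E} \iff v \notin \B \cup \E \text{ and } p - 1 \leqslant v \leqslant q.
\]
It remains to check that, under $v \notin \B \cup \E$, the inequality $p - 1 \leqslant v \leqslant q$ is equivalent to ``$v - 1 \in \E$ or $v + 1 \in \B$''. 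For the easy direction, $v - 1 \in \E$ gives $v \in \E[1] \subseteq \llrr{p,q}$, and $v + 1 \in \B$ gives $v + 1 \in \llrr{p,q}$, hence $p - 1 \leqslant v \leqslant q - 1$. Conversely, if $p \leqslant v \leqslant q$, then $v \in \B \cup \E[1]$; since $v \notin \B$, we get $v \in \E[1]$, i.e. $v - 1 \in \E$. If instead $v = p - 1$, then $v + 1 = p \in \B \cup \E[1]$; if $p \in \E[1]$ then $v = p-1 \in \E$, contradicting $v \notin \E$, so $v + 1 = p \in \B$.

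I expect no serious obstacle here: the content is purely bookkeeping. The only point requiring a touch of care is the boundary endpoint $v = p-1$ in the translation step, where the disjointness $\B \cap \E[1] = \varnothing$ built into the definition of a shifted interval bipartition is what forces the alternative $v + 1 \in \B$ (rather than $v - 1 \in \E$). Once this translation is in hand, the two cases of the corollary fall out of the two cases of \cref{prop:maxadjareflnicestext} without further work.
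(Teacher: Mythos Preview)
Your approach is essentially the paper's: it reduces the corollary to \cref{prop:maxadjareflnicestext} via the completion bijection of \cref{cor:bijeffshintbi} and \cref{def:toggleeff}. The paper's proof is a one-line citation of exactly these three results; you have simply spelled out the translation of the condition $v \notin \overline{\B} \cup \overline{\E}$ into the condition on $(\B,\E)$, and your argument there is correct.

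There is one slip in your handling of the degenerate case $(\B,\E) = (\varnothing,\varnothing)$. You assert that the chosen completion $\overline{\varnothing}_v = (\{v+1,\ldots,n+1\},\{0,\ldots,v-1\})$ satisfies $v \in \mathbf{C} \cup \mathbf{F}$, but in fact $v$ lies in neither set. Applying \cref{prop:maxadjareflnicestext} correctly to this completion therefore lands you in its case~1), giving $\mathscr{T} = \refl_v(\varnothing) \cup \{\llrr v\} = \{\llrr v\}$, not $\varnothing$; and one can check directly from \cref{rem:explicitog} that indeed $\tog_v(\varnothing,\varnothing) = (\{v\},\{v\})$. So in this degenerate instance your computation is off, and moreover the answer does not match the corollary's case~2). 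This is a genuine edge case of the statement itself rather than of your method: the paper's proof invokes \cref{cor:bijeffshintbi}, which is stated only for pairs of \emph{nonempty} subsets, and the downstream uses of the corollary (in \cref{cor:antrefl} and \cref{thm:CJR}) only ever apply it to nonempty bipartitions. For the nonempty case your argument is complete and matches the paper.
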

	\begin{proof}[Proof]
		This results from \cref{cor:bijeffshintbi}, \cref{def:toggleeff}, and \cref{prop:maxadjareflnicestext}.
	\end{proof}
	\begin{remark}
		Note that the condition $v-1 \in \E$ or $v+1 \in \B$ makes sure that there exists an interval in $\mathscr{J}(\B,\E)$ adjacent to $\llrr{v}$, and so $\mathscr{J}(\B,\E)$ will not be fixed under $\refl_v$.
	\end{remark}
	\begin{cor} \label{cor:antrefl} For any effective shifted interval bipartitions $(\B, \E)$, there exists a unique effective shifted interval bipartition $(\B', \E')$ such that either $\mathscr{J}(\B, \E) = \refl_v(\mathscr{J}(\B',\E'))$ if $v \notin \B \cap \E$ or $\mathscr{J}(\B, \E) = \refl_v(\mathscr{J}(\B',\E')) \cup \{\llbracket v \rrbracket\}$ otherwise.
	\end{cor}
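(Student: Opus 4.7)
The plan is to exhibit the required bipartition explicitly as $(\B', \E') = \tog_v(\B, \E)$, the toggle of \cref{def:toggleeff}, and then deduce existence and uniqueness from the involutivity of $\tog_v$ together with \cref{prop:maxadjareflnicestext}.

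The first step is to verify that $\tog_v$ is an involution. On extended shifted bipartitions a short case check against \cref{def:toggleext} shows that the two nontrivial cases ``$v \notin \C \cup \F$'' and ``$v \in \C \cap \F$'' are exchanged by $\tog_v$ while the remaining configurations are fixed, so applying it twice returns to the starting bipartition. Via the bijection of \cref{cor:bijeffshintbi} and the ad hoc convention for $(\varnothing, \varnothing)$ in \cref{def:toggleeff}, this lifts to an involution on effective shifted interval bipartitions.

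For existence, set $(\B',\E') = \tog_v(\B,\E)$. Involutivity yields $\tog_v(\B',\E') = (\B,\E)$, and \cref{prop:maxadjareflnicestext} applied to the completion $(\overline{\B'}, \overline{\E'})$ gives either $\mathscr{J}(\B,\E) = \refl_v(\mathscr{J}(\B',\E')) \cup \{\llrr v\}$ when $v \notin \overline{\B'} \cup \overline{\E'}$, or $\mathscr{J}(\B,\E) = \refl_v(\mathscr{J}(\B',\E'))$ otherwise. Matching the statement of the corollary then requires the equivalence between ``$v \notin \overline{\B'} \cup \overline{\E'}$'' and ``$v \in \B \cap \E$'': tracing \cref{def:toggleext} shows that $\tog_v$ on $(\overline{\B}, \overline{\E})$ removes $v$ from both components precisely in Case~2, which is triggered exactly when $v \in \overline{\B} \cap \overline{\E}$, and for $v \in \{1,\ldots,n\}$ this is equivalent to $v \in \B \cap \E$.

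For uniqueness, the bijection of \cref{cor:bijeffshintbi} reduces the question to determining $\mathscr{J}(\B',\E')$ from the stated equation. The involution of $\refl_v$ on $\mathcal{I}_n \setminus \{\llrr v\}$ (\cref{rem:reflkint}) pins down $\mathscr{J}(\B',\E') \setminus \{\llrr v\}$, and whether $\llrr v$ itself belongs to $\mathscr{J}(\B',\E')$ is then forced by the shifted bipartition constraint $\B' \cap \E'[1] = \varnothing$ together with the requirement that $(\overline{\B'}, \overline{\E'}) = \tog_v(\overline{\B}, \overline{\E})$. The main technical obstacle will be handling the degenerate completion of $(\varnothing, \varnothing)$, which is not canonically determined; the specific convention $\overline{\varnothing}_v$ adopted in \cref{def:toggleeff} is precisely what keeps $\tog_v$ involutive and singles out the canonical $(\B',\E')$.
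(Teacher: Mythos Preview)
Your proposal is correct and follows essentially the same approach as the paper: set $(\B',\E') = \tog_v(\B,\E)$ and combine the involutivity of the toggle with \cref{prop:maxadjareflnicest} (equivalently, its extended form \cref{prop:maxadjareflnicestext}). The paper's proof is a three-line sketch citing the involution of $\refl_v$ and \cref{prop:maxadjareflnicest}; you have simply unpacked it, including the case equivalence ``$v \notin \overline{\B'}\cup\overline{\E'}$'' $\Leftrightarrow$ ``$v \in \B\cap\E$'' and the handling of $(\varnothing,\varnothing)$.

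One small remark on your uniqueness paragraph: invoking ``the requirement that $(\overline{\B'},\overline{\E'}) = \tog_v(\overline{\B},\overline{\E})$'' is circular, since that is your candidate, not a hypothesis of the corollary. The honest argument is the one you start: $\refl_v$ being an involution determines $\mathscr{J}(\B',\E')\setminus\{\llrr v\}$, and then one checks directly that at most one of $\mathscr{J}(\B',\E')\setminus\{\llrr v\}$ and $(\mathscr{J}(\B',\E')\setminus\{\llrr v\})\cup\{\llrr v\}$ can arise as $\mathscr{J}(\B'',\E'')$ for an effective shifted interval bipartition $(\B'',\E'')$, using the product structure of such interval sets. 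The paper's own proof is equally terse on this point, so this is a refinement rather than a correction.
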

	\begin{proof}[Proof]
		We use the fact that $\refl_v$ is an involution on interval subsets which do not contain $\llrr{v}$ and \cref{prop:maxadjareflnicest}. Therefore $(\B', \E') = \tog_v(\B,\E)$.
	\end{proof}
	\begin{remark} \label{rem:samestatext} A similar statement can be made for extended shifted bipartitions of $\{1, \ldots, n\}$. In what follows, we will focus only on the effective shifted-interval bipartitions.
	\end{remark}
	
	\section{Proof of the main result}
	\label{s:proof}
	In this section, we prove \cref{maintheorem}. To do so, we will prove that for any complete shifted bipartition $(\B,\E)$ of $\{1, \ldots,n\}$, and any quiver $Q$ of $A_n$ type, $\Cat_{Q}(\mathscr{J}(\B,\E))$ is canonically Jordan recoverable. We first prove this claim for $Q$ linearly oriented and then generalize it to any $A_n$ type quiver.
	
	\subsection{The linearly oriented case}
	\label{ss:linearcase}
	We introduce an algorithm as a sequence of operations seen in \cref{s:operationsCJR} which builds the category
	$\mathscr{C}_{\overrightarrow{A_n}}(\B,\E) = \Cat_{\overrightarrow{A_n}}(\mathscr{J}(\B,\E)))$ for any given complete shifted bipartition $(\B,\E)$ of $\{1, \ldots, n\}$. Thanks to it, we will prove that this category is canonically Jordan recoverable.
	\begin{algo} \label{algo:catCJR}
		Let $n \geqslant 1$ and $Q = \overrightarrow{A_n}$. 
		\begin{enumerate}[label=$(\arabic*)$,itemsep=1mm]
			\setcounter{enumi}{-1}
			\item \textbf{Input:} a complete shifted bipartition $(\B,\E)$ of $\{1,\ldots,n\}$.
			\item Set 
			\begin{enumerate}[label=$\bullet$,itemsep=1mm]
				\item $Q_{1,0} = \overrightarrow{A_n}$;
				\item $\mathscr{C}^{1,0} = \{0\} \subseteq \rep(Q_{1,0})$;
				\item $\mathscr{D}^{1,0} = \{0\} \subseteq \rep(Q_{1,0})$.
			\end{enumerate}

			\item For $i \in \{1,\ldots,n+1\}$:
			\begin{enumerate}[label=$(2\alph*)$,itemsep=1mm]
				\item For $j \in \{0,\ldots,n-i\}$, we put
				\begin{enumerate}[label=$\bullet$,itemsep=1mm]
					\item $Q_{i,j+1} = \sigma_{n-j}(Q_{i,j})$;
					\item $\mathscr{D}^{i,j+1} = \mathcal{R}_{n-j}^-(\mathscr{C}^{i,j})$; and,
					\item  $\displaystyle \mathscr{C}^{i,j+1} = \begin{cases}
						\Adds_{n-j}(\mathscr{D}^{i,j+1}) & \text{if } i \in \B  \text{ and } i+j\in \E   \\
						\mathscr{D}^{i,j+1} &  \text{otherwise.}
					\end{cases}$
				\end{enumerate}
				
				\item Set 
				\begin{enumerate}[label=$\bullet$,itemsep=1mm]
					\item $Q_{i+1,0} = Q_{i,n-i+1}$; and,					
					\item $\mathscr{C}^{i+1,0} = \mathscr{C}^{i,n-i+1}$.
				\end{enumerate}
			\end{enumerate}
			\item \textbf{Output:} $Q_{n+1,0}$ and $\mathscr{C}^{n+1,0} \subseteq \rep(Q_{n+1,0})$.
		\end{enumerate}
		In the following, we set $\mathscr{C}^{i,j} = \mathscr{C}_{Q_{i,j}}^{i,j}(\B,\E)$ and $\mathscr{D}^{i,j} = \mathscr{D}_{Q_{i,j}}^{i,j}(\B,\E)$ for $1 \leqslant i \leqslant n+1$ and $0 \leqslant j \leqslant n-i+1$.
	\end{algo}
	
	
		
		We can rephrase \cref{algo:catCJR} by giving a similar algorithm for interval sets corresponding to $\mathscr{C}^{i,j}$ and $\mathscr{D}^{i,j}$. 
		
		\begin{algo} \label{algo:intCJR}
			Let $n \geqslant 1$ and $Q = \overrightarrow{A_n}$. 
			\begin{enumerate}[label=$(\arabic*)$,itemsep=1mm]
				\setcounter{enumi}{-1}
				\item \textbf{Input:} a complete shifted bipartition $(\B,\E)$ of $\{1,\ldots,n\}$.
				\item Set 
				\begin{enumerate}[label=$\bullet$,itemsep=1mm]
					\item $\mathscr{J}^{1,0} = \varnothing$; and $\mathscr{T}^{1,0} = \varnothing$.
				\end{enumerate}

				\item For $i \in \{1,\ldots,n+1\}$:
				\begin{enumerate}[label=$(2\alph*)$,itemsep=1mm]
					\item For $j \in \{0,\ldots,n-i\}$, we put
					\begin{enumerate}[label=$\bullet$,itemsep=1mm]
						\item $\mathscr{T}^{i,j+1} = \refl_{n-j}(\mathscr{J}^{i,j})$; and,
						\item $\displaystyle \mathscr{J}^{i,j+1} = \begin{cases}
							\mathscr{T}^{i,j+1} \cup  \{\llbracket n-j \rrbracket\} & \text{if } i \in \B \text{ and } i+j \in \E   \\
							\mathscr{T}^{i,j+1} &  \text{otherwise.}
						\end{cases}$
					\end{enumerate}
					
					\item Set $\mathscr{J}^{i+1,0} = \mathscr{J}^{i,n-i+1}$.
				\end{enumerate}
				\item \textbf{Output:} $\mathscr{J}^{n+1,0} \subseteq \mathcal{I}_n$.
			\end{enumerate}
			In the following, we set $\mathscr{J}^{i,j} = \mathscr{J}^{i,j}(\B,\E)$ and $\mathscr{T}^{i,j} = \mathscr{T}^{i,j}(\B,\E)$ for $1 \leqslant i \leqslant n+1$ and $0 \leqslant j \leqslant n-i+1$.
		\end{algo}
		
		
			
			
			By construction, we have that $\Cat_{Q_{i,j}}(\mathscr{J}^{i,j}) = \mathscr{C}^{i,j}$ and $\Cat_{Q_{i,j}}(\mathscr{T}^{i,j}) = \mathscr{D}^{i,j}$.
			\begin{prop} \label{algoresult} The \cref{algo:catCJR} returns
				$\mathscr{C}^{n+1,0} = \mathscr{C}_{\overrightarrow{A_n}}(\mathbf B,\mathbf E)$.
			\end{prop}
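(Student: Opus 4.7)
The plan is to work at the level of interval sets via \cref{algo:intCJR}, tracking each simple $\llrr{n-j}$ added at step $(i,j)$ through all subsequent reflections and showing that it evolves into the interval $\llrr{i,i+j}$ by stage $\mathscr{J}^{n+1,0}$. Since the condition for adding a simple at step $(i,j)$ is $i\in\B$ and $i+j\in\E$, and $(i,j)\mapsto(b,e)=(i,i+j)$ bijects such pairs with those $(b,e)$ satisfying $b\in\B$, $e\in\E$, $b\leq e$, the final interval set will be exactly $\mathscr{J}(\B,\E)$.

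First I would close out round $i$. After $\llrr v$ with $v=n-j$ is added at step $(i,j)$, the remaining reflections in round $i$ are $\refl_{v-1},\refl_{v-2},\ldots,\refl_i$ (possibly none, when $v\leq i$). An induction on the step shows that each such reflection falls into the extension case $w=b(K)-1$ of \cref{def:intervrefl}, so that the tracked interval becomes $\llrr{i,v}=\llrr{i,n-j}$ at the end of round $i$.

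Next I would analyze the later rounds. For an interval $\llrr{b,m}$ with $b<r$ present at the start of round $r$ (where $i<r\leq n$), a direct case analysis from \cref{def:intervrefl} shows that the sequence $\refl_n,\refl_{n-1},\ldots,\refl_r$ transforms it in exactly one of three ways: $(i)$ if $m<r-1$, it is unchanged; $(ii)$ if $r-1\leq m<n$, it is extended to $\llrr{b,m+1}$ by $\refl_{m+1}$ (reflections at higher $w$ or at interior vertices being trivial); $(iii)$ if $m=n$, successive right-endpoint removals shrink it to $\llrr{b,r-1}$. Starting from $\llrr{i,n-j}$ at the end of round $i$, case $(ii)$ makes the right endpoint grow by one each round until it reaches $n$ at round $i+j$; if $i+j<n$, case $(iii)$ at round $i+j+1$ then drops it to $i+j$ and case $(i)$ keeps it fixed thereafter. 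Either way the final interval is $\llrr{i,i+j}$.

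Finally, a short induction on $i$ shows that $Q_{i+1,0}$ has arrows $1\to 2\to\cdots\to i$ and $i\leftarrow i+1\leftarrow\cdots\leftarrow n$, so in particular $Q_{n+1,0}=\overrightarrow{A_n}$. Combined with the interval tracking above, this gives $\mathscr{C}^{n+1,0}=\Cat_{\overrightarrow{A_n}}(\mathscr{J}(\B,\E))=\mathscr{C}_{\overrightarrow{A_n}}(\B,\E)$. The main delicate point will be the later-round case analysis, in particular verifying that interior reflections $\refl_w$ with $w$ strictly between $b$ and $m$ have no effect and that the invariant $b<r$ prevents the left endpoint from being erased by $\refl_b$.
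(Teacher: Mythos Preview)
Your proposal is correct and follows essentially the same approach as the paper: both work at the level of interval sets via \cref{algo:intCJR}, track the simple $\llrr{n-j}$ added under the condition $i\in\B,\ i+j\in\E$ through the subsequent reflections, and verify it lands at $\llrr{i,i+j}$ in $\mathscr{J}^{n+1,0}$, together with the check that $Q_{n+1,0}=\overrightarrow{A_n}$. Your three-case analysis of a round acting on $\llrr{b,m}$ with $b<r$ is a slightly more explicit version of what the paper compresses into ``by the same argument''; the only cosmetic slip is the phrase ``possibly none, when $v\leq i$'' (in fact $v=n-j\geq i$ always, with equality exactly when no reflections remain).
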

			\begin{proof}[Proof] First, we can check, by a simple induction, that the quiver $Q_{p,0}$, for $p \geqslant 2$, is as follows.
				\begin{center}
					\begin{tikzpicture}[-,line width=0.5mm,>= angle 60, ->,color=black,scale=0.7]
						\node (1) at (0,0){$1$};
						\node (2) at (2,0){$2$};
						\node (3) at (4,0){$\cdots$};
						\node (4) at (6,0){$p-1$};
						\node (5) at (8,0){$p$};
						\node (6) at (10,0){$\cdots$};
						\node (7) at (12,0){$n$};
						\draw (1) -- (2) ;
						\draw (2) -- (3) ;
						\draw (3) -- (4);
						\draw (7) -- (6) ;
						\draw (6) -- (5) ; 
						\draw (5)-- (4);
					\end{tikzpicture}
				\end{center}
				Thus $Q_{n+1,0} = \overrightarrow{A_n}$ as claimed.
				As there is a bijection from subcategories of any given $A_n$ type quiver $Q$ to interval subsets, we can work with the sequence $\mathscr{J}^{i,j}$ and prove that $\mathscr{J}^{n+1,0} = \mathscr{J}(\B, \E)$. It will imply the result we wished for. 
				
				To do so, we will first study the evolution of $\llbracket n-j \rrbracket$ appearing in $\mathscr{J}^{i,j+1}$ following the algorithm until arriving at $\mathscr{J}^{n+1,0}$. Following the sequence of reflections from $\mathscr{J}^{i,j+1}$ to $\mathscr{J}^{i+1,0} = \mathscr{J}^{i,n-i+1}$ we get $$\refl_{i} \circ \refl_{i+1} \circ \ldots \circ \refl_{n-j-1} (\llbracket n-j\rrbracket) = \llbracket i,n-j \rrbracket .$$ In the sequence of reflections applied to get $\mathscr{J}^{i+2,0}$ from $\mathscr{J}^{i+1,0}$, the only reflection that affects $\llbracket i, n-j \rrbracket$ is $\refl_{n-j+1}$. So $\llbracket n-j \rrbracket$ in $\mathscr{J}^{i,j+1}$ corresponds to $\llbracket i,n-j+1 \rrbracket$ in $\mathscr{J}^{i+2,0}$. 
				
				By the same argument, we get that $\llbracket n-j \rrbracket$ in $\mathscr{J}^{i,j+1}$ corresponds to $\llbracket i,n \rrbracket$ in $\mathscr{J}^{i+j+1,0}$. Here, following the sequence of reflections applied to get $\mathscr{J}^{i+j+2,0} = \mathscr{J}^{i+j+1,n-i-j+2}$ from $\mathscr{I}^{i+j+1,0}$, the interval $\llbracket i,n \rrbracket$ becomes $\llbracket i,i+j \rrbracket$ in $\mathscr{J}^{i+j+2,0}$. As the remainder of the sequence of reflections we still have to apply to get $\mathscr{J}^{n+1,0}$ from $\mathscr{J}^{i+j+2,0}$ does not affect anymore $\llbracket i,i+j \rrbracket$, because they only touch the vertices $q \geqslant i+j+2$, we conclude that $\llbracket n-j \rrbracket$ in $\mathscr{J}^{i,j+1}$ corresponds to $\llbracket i,i+j \rrbracket \in \mathscr{J}^{n+1,0}$.
				
				To end the proof, we only have to notice that during the construction, we add in our interval subset $\llbracket n-j \rrbracket \in \mathscr{J}^{i,j+1}$ if and only if we have $i \in \B$ and $i+j \in \E$.
			\end{proof}
			\begin{theorem}\label{thm:CJRlinear} Let $n \geqslant 1$.
				For all effective shifted interval bipartitions $(\B, \E)$ of $\{1, \ldots, n\}$, $\mathscr{C}_{\overrightarrow{A_n}}(\B, \E)$ is canonically Jordan recoverable. Moreover, for all $X \in \mathscr{C}_{\overrightarrow{A_n}}(\B, \E)$:
				\begin{enumerate}[label = $(\alph*)$]
					\item $\GenJF(X)$ is $(\boxplus, \boxplus)$-storable at $q$ for $q \notin \B \cup \E$;
					
					\item $\GenJF(X)$ is $(\boxplus, \boxminus)$-storable at $q$ for  $q \in \E \setminus \B$;
					
					\item $\GenJF(X)$ is $(\boxminus, \boxplus)$-storable at $q$ for $q \in \B \setminus \E$;
					
					\item $\GenJF(X)$ is $(\boxminus, \boxminus)$-storable at $q$ for $q \in \B \cap \E$;
					
					\item $\GenJF(X)^q = (0)$ whenever $q < \min(\B)$ or $q > \max(\E)$
					
				\end{enumerate}
			\end{theorem}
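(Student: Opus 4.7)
The plan is to proceed by induction along the steps of \cref{algo:catCJR}, using \cref{algoresult} to identify the final output $\mathscr{C}^{n+1,0}$ with $\mathscr{C}_{\overrightarrow{A_n}}(\B,\E)$. The inductive invariant at step $(i,j)$ will assert that $\mathscr{C}^{i,j} \subseteq \rep(Q_{i,j})$ is canonically Jordan recoverable and that for every $X \in \mathscr{C}^{i,j}$, the tuple $\GenJF(X)$ carries a prescribed storability type at each vertex of $Q_{i,j}$, the types being determined by the state of the algorithm together with the pair $(\B,\E)$. The base case $\mathscr{C}^{1,0} = \{0\}$ is immediate since every partition is $(0)$, making all storability conditions vacuous.

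For the inductive step, the two operations are analysed separately. The reflection transition $\mathscr{D}^{i,j+1} = \mathcal{R}_{n-j}^{-}(\mathscr{C}^{i,j})$ preserves canonical Jordan recoverability by \cref{mutationCJR}, provided condition \ref{nabla} holds at vertex $n-j$; the invariant must be formulated to guarantee this. \cref{lem:elementary diag store} \ref{ediag1}--\ref{ediag4} then describes how the storability type at $n-j$ changes (for instance $(\boxplus,\boxplus)$ becomes strongly $(\boxminus,\boxminus)$, and the two mixed types are preserved), while other vertices are untouched. The add-simple transition $\mathscr{C}^{i,j+1} = \Adds_{n-j}(\mathscr{D}^{i,j+1})$, which is applied precisely when $i \in \B$ and $i+j \in \E$, preserves canonical Jordan recoverability by \cref{addsimple2}, provided condition \ref{star} (strong $(\boxminus,\boxminus)$-storability) holds at vertex $n-j$. \cref{GenJFaddsimple} then dictates how the storability type updates: only the first part of the partition at $n-j$ grows, which is exactly the local change needed to turn a strong $(\boxminus,\boxminus)$ configuration into one of the allowed types at the next step.

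The hardest part will be formulating the invariant carefully enough that at each transition the hypothesis of the applicable proposition is automatically verified, and that at the final step the invariant collapses to conditions $(a)$--$(d)$ of the statement. The trajectory analysis performed in the proof of \cref{algoresult} is the key combinatorial input here: the simple $\llrr{n-j}$ added at step $(i,j+1)$ eventually becomes the indecomposable $X_{\llrr{i,i+j}}$ in the final quiver $\overrightarrow{A_n}$, which means that a left $\boxminus$ appears at vertex $q$ in the final storability configuration precisely when $q \in \B$, and a right $\boxminus$ appears precisely when $q \in \E$. This recovers $(a)$--$(d)$. Condition $(e)$ will follow immediately, since no indecomposable $X_K \in \mathscr{C}_{\overrightarrow{A_n}}(\B,\E)$ has $q \in K$ for $q < \min(\B)$ or $q > \max(\E)$, whence $X_q = 0$ and $\GenJF(X)^q = (0)$.
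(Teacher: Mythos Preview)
Your proposal is correct and follows essentially the same approach as the paper: induction along \cref{algo:catCJR}, using \cref{mutationCJR} and \cref{addsimple2} to propagate canonical Jordan recoverability and \cref{lem:elementary diag store} together with \cref{GenJFaddsimple} to track the storability type at each vertex. The paper additionally spells out the explicit inductive invariant $(\mathbf{H}_{i,j})$ and, at the end, reduces an arbitrary effective shifted interval bipartition to a complete one by restricting to the subquiver on $\{\min(\B),\ldots,\max(\E)\}$, a step your outline leaves implicit but which is straightforward.
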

			\begin{proof}[Proof] Assume first that $(\B,\E)$ is a complete shifted bipartition of $\{1,\ldots,n\}$. In this case, we show the claimed result by an induction proof of the following claim, for all $i \in \{1,...,n+1\}$ and $j \in \{0,\ldots,n-i+1\}$:
				\begin{enumerate}[label = $(\mathbf{H}_{i,j})$]
					\item \label{Hyp:storabilityCat} The category $\mathscr{C}^{i,j}$ is canonically Jordan recoverable and for all $ X \in \mathscr{C}^{i,j}$:
					\begin{enumerate}[label = $(\alph*)$] 
						\item $\GenJF(X)$ is $(\boxplus, \boxplus)$-storable at $q$ for $q \in \{1, \ldots, n\}$ such that either:
						\begin{enumerate}[label*=$(\arabic*)$]
							\item $ q < i-1$ and $q \notin \B \cup \E$, or;
							
							\item $q = n-j$ if $i \in \B$ and $n-j \neq i-1$.
						\end{enumerate}
						
						\item $\GenJF(X)$ is $(\boxplus, \boxminus)$-storable at $q$ for $q \in \{1, \ldots, n\}$ such that either:
						\begin{enumerate}[label*=$(\arabic*)$]
							\item $q < i-1$ and $q \in \E \setminus \B$, or;
							
							\item $q = i-1$, if $i-1 \notin \B$, or;
							
							\item $q > i-1$ and $q \notin \{n-j,n-j+1\}$, or;
							
							\item $q \in \{n-j, n-j+1\}$ if $i \notin \B$.
						\end{enumerate}

						\item $\GenJF(X)$ is $(\boxminus, \boxplus)$-storable at $q$ for $q \in \{1, \ldots, n\}$ such that $q < i-1$ and $q \in \B \setminus \E$;
						
						\item $\GenJF(X)$ is $(\boxminus, \boxminus)$-storable at $q$ for $q \in \{1, \ldots i-1\}$ such that either:
						\begin{enumerate}[label*=$(\arabic*)$]
							\item $q < i-1$ and $q \in \B \cap \E$, or;
							
							\item $q = i-1$, if $i-1 \in \B$, or;
							
							\item $q =n-j+1$ if $i \in \B$.
						\end{enumerate}
						
						\item $\GenJF(X)^q = \GenJF(X)^{q+1}$ for $q \in \{0, \ldots, n\}$ such that either:
						\begin{enumerate}[label*=$(\arabic*)$]
							\item $n-q+i \leqslant n-j$ and $n-q+i \in \B$, or;
							
							\item $n-q+i+1 > n-j$ and $n-q+i+1 \in \B$.
						\end{enumerate}
					\end{enumerate}
				\end{enumerate}
				Recall that we extended the generic Jordan form data of any $X \in \rep(Q_{i,j})$ by writing $\GenJF(X)^0 = \GenJF(X)^{n+1} = (0)$.
				
				Note, by \cref{algoresult}, that the claim $(\mathbf{H}_{n+1,0})$ corresponds to the wished-for result. 
				
				For $i=1$ and $j=0$, $\mathscr{C}^{1,0} = \{0\}$ is canonically Jordan recoverable, and $\GenJF(0) = ((0))_{1 \leqslant q \leqslant n}$ satisfies all the storability conditions and equalities we ask for. 
				
				Now assume that for a fixed $i \in \{1, \ldots, n+1 \}$, and $j \in \{0, \ldots, n-i+1\}$, $\mathscr{C}^{i,j}$ satisfies \ref{Hyp:storabilityCat}. 
				
				We will show that either $\mathscr{C}^{i,j+1}$ satisfies $(\mathbf{H}_{i,j+1})$ if $j < n-i+1$, or $\mathscr{C}^{i+1,0}$ satisfies $(\mathbf{H}_{i+1,0})$ otherwise. We can already say that the only vertices $q$ where the storability conditions change are $q = n-j-1$, $q =n-j$, and $q = n-j+1$. 
				
				We have several cases to treat:
				\begin{enumerate}[label = $\bullet$]
					\item The case $j=n-i+1$ is trivial by the fact that $\mathscr{C}^{i,n-i+1} = \mathscr{C}^{i+1,0}$ by \cref{algo:catCJR}. We only need to check that $(i,n-i+1)$ and $(i+1,0)$ yield the same storability conditions.
					
					\item Assume that $i \notin \B$ and $j < n-i+1$. Following \cref{algo:catCJR}, to get $\mathscr{C}^{i,j+1}$ from $\mathscr{C}^{i,j}$, we have to apply the reflection functor $\mathcal{R}_{n-j}^-$. By \ref{Hyp:storabilityCat}, we know that $\mathscr{C}^{i,j}$ is canonically Jordan recoverable, and, for all $X \in \mathscr{C}^{i,j}$, $\GenJF(X)$ is $(\boxplus, \boxminus)$-storable at $n-j$. As a consequence of \cref{mutationCJR}, $\mathscr{C}^{i,j+1} = \mathcal{R}_{n-j}^-(\mathscr{C}^{i,j})$ is canonically Jordan recoverable. Moreover, by \cref{thm:reflectionGenJF}, and \cref{lem:elementary diag store}  \ref{ediag2}, for all $X \in \mathscr{C}^{i,j}$, $\GenJF(\mathcal{R}_{n-j}^-(X))$ is $(\boxplus, \boxminus)$-storable at $n-j$. Thus, the storability conditions satisfied by representations in $\mathscr{C}^{i,j}$ and those satisfied by representations in $\mathscr{C}^{i,j+1}$ are the same. If $\GenJF(X)^{n-j} = \GenJF(X)^{n-j-1}$, then 
					$\GenJF(\mathcal{R}_{n-j}^-(X))^{n-j} = \GenJF(\mathcal{R}_{n-j}^-(X))^{n-j+1}$ by \cref{lem:elementary diag store} \ref{ediag0}.
					
					\item Assume that $i \in \B$, $j < n-i+1$ and $i+j \notin \E$. To go from $\mathscr{C}^{i,j}$ to $\mathscr{C}^{i,j+1}$, we only have to apply $\mathcal{R}_{n-j}^-$. By induction, we have that $\GenJF(X)$ is $(\boxplus, \boxplus)$-storable at $n-j$ for all $X \in \mathscr{C}^{i,j}$. Then, by \cref{mutationCJR}, we get that $\mathscr{C}^{i,j+1}$ is canonically Jordan recoverable and, by \cref{thm:reflectionGenJF}, $\GenJF(\mathcal{R}_{n-j}^-(X))$ is  strongly $(\boxminus, \boxminus)$-storable at $n-j$. We can also check easily that $\GenJF(\mathcal{R}_{n-j}^-(X))$ is $(\boxplus, \boxplus)$-storable at $n-j-1$ and $(\boxplus, \boxminus)$-storable at $n-j+1$ if $n-j-1 > i-1$ by the diagonal transformation at $n-j$. We can also remark that if $\GenJF(X)^{n-j} = \GenJF(X)^{n-j-1}$, then $\GenJF(\mathcal{R}_{n-j}^-(X))^{n-j} = \GenJF(\mathcal{R}_{n-j}^-(X))^{n-j+1}$ by \cref{lem:elementary diag store} \ref{ediag0}.
					
					\item Assume that $i \in \B$, $j < n-i+1$ and $i+j \in \E $. To go from $\mathscr{C}^{i,j}$ to $\mathscr{C}^{i,j+1}$, we need to use $\mathcal{R}_{n-j}^-$ followed by $\Adds_{n-j}$. By induction, we have that $\GenJF(X)$ is $(\boxplus, \boxplus)$-storable at $n-j$ for all $X \in \mathscr{C}^{i,j}$. By \cref{mutationCJR}, we get that $\mathscr{D}^{i,j+1} = \mathcal{R}_{n-j}^-(\mathscr{C}^{i,j})$  is canonically Jordan recoverable and, by \cref{thm:reflectionGenJF}, $\GenJF(\mathcal{R}_{n-j}^-(X))$ strongly $(\boxminus, \boxminus)$-storable at $n-j$. By \cref{addsimple2}, $\mathscr{C}^{i,j+1} = \Adds_{n-j}(\mathscr{D}^{i,j+1})$ is also canonically Jordan recoverable, and by \cref{GenJFaddsimple}, $\GenJF(Z)$ is $(\boxminus, \boxminus)$-storable at $n-j$ for all $Z \in \mathscr{C}^{i,j+1}$. We also have that $\GenJF(Z)$ is $(\boxplus, \boxplus)$-storable at $n-j-1$, if $n-j-1 > i-1$ and $(\boxplus,\boxminus)$-storable at $n-j+1$. 
				\end{enumerate}
				This completes the induction proof in this case.
				
				Therefore \ref{Hyp:storabilityCat} is true for all $i \in \{1, \ldots , n+1\}$ and $j \in \{0, \ldots, n-i+1\}$, and we get the wished-for result for all complete shifted bipartition $(\B, \E)$ of $\{1, \ldots, n\}$.
				
				Now assume that $(\B,\E)$ is an arbitrary effective shifted interval bipartition of $\{1, \ldots, n\}$. We can see the category $\mathscr{C}_{\overrightarrow{A_n}}(\B,\E)$ has a subcategory in $\mathscr{C}_{\overrightarrow{A_m}}(\B^\circ, \E^\circ)$ where $m = \max(\E) - \min(\B) + 1$, $\B^\circ = \{i-\min(\B)+1 \mid i \in \B\}$ and $\E^\circ = \{j-\min(\B) + 1 \mid j \in \E\}$. As $(\B^\circ, \E^\circ)$ is a complete shifted bipartition of $\{1,\ldots,m\}$, we get that $\mathscr{C}_{\overrightarrow{A_n}}(\B,\E)$ is canonically Jordan recoverable and the storable condition of $\GenJF(X)$ holds for all $X \in \mathscr{C}_{\overrightarrow{A_n}}(\B, \E)$. Note that, for such a representation $X$, it is obvious that $\GenJF(X)^q = (0)$ for $q \notin \{\min(\B), \ldots, \max(\E)\}$. 
				
				This completes the proof.
			\end{proof}

			\subsection{For other orientations}
			\label{ss:Otherorientations} In this subsection, we show that \cref{maintheorem} holds for all quivers of $A_n$ type. To do so, we will first prove a result similar to \cref{thm:CJRlinear} available for any $A_n$ type quiver and then use it to give the final proof.
			\begin{theorem}\label{thm:CJR} Let $n > 0$ and $Q$ be an $A_n$ type quiver. 
				For all effective shifted interval bipartitions  $(\B, \E)$ of $\{1, \ldots, n\}$, $\mathscr{C}_{Q}(\B, \E)$ is canonically Jordan recoverable. Moreover, for all $X \in \mathscr{C}_{Q}(\B, \E)$:
				\begin{enumerate}[label = $(\alph*)$]
					\item $\GenJF(X)$ is $(\boxplus, \boxplus)$-storable at $q$ for $q \notin \B \cup \E$;
					
					\item $\GenJF(X)$ is $(\boxplus, \boxminus)$-storable at $q$ for  $q \in \E \setminus \B$;
					
					\item $\GenJF(X)$ is $(\boxminus, \boxplus)$-storable at $q$ for $q \in \B \setminus \E$;
					
					\item $\GenJF(X)$ is $(\boxminus, \boxminus)$-storable at $q$ for $q \in \B \cap \E$;
					
					\item $\GenJF(X)^q = (0)$ whenever $q < \min(\B)$ or $q > \max(\E)$.
				\end{enumerate}
			\end{theorem}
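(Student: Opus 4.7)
The plan is to generalize the algorithmic construction of \cref{algo:catCJR} from the linearly oriented setting to arbitrary $A_n$ type quivers $Q$, then invoke \cref{addsimple2} and \cref{mutationCJR} step by step to conclude canonical Jordan recoverability together with the claimed storability pattern.

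Given an $A_n$ quiver $Q$ and an effective shifted interval bipartition $(\B, \E)$ of $\{1, \ldots, n\}$, I would construct an algorithm (analogous to \cref{algo:catCJR}) that starts at the zero subcategory over some fixed initial $A_n$ type quiver (for instance $\overrightarrow{A_n}^{\op}$) and walks through a sequence of intermediate subcategories over successively-reflected quivers, ending at $\mathscr{C}_Q(\B, \E)$. At each step we either apply a reflection functor $\mathcal{R}_v^{\pm}$ at a source or sink $v$ of the current quiver, or apply $\Adds_v$ to add the simple module at $v$. The sequence of reflections is chosen to trace a path from $\overrightarrow{A_n}^{\op}$ to $Q$ (rather than to $\overrightarrow{A_n}$), with $\Adds_v$ operations inserted at the vertex-times that correspond to producing the desired indecomposables $X_{\llrr{b, e}}$ (with $b \in \B$, $e \in \E$) in the final category in $\rep(Q)$.

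The proof would proceed by induction on the step count, tracking both canonical Jordan recoverability (using \cref{mutationCJR} at every reflection and \cref{addsimple2} at every addition) and the precise storability pattern of the generic Jordan form data (using \cref{thm:reflectionGenJF} and \cref{lem:elementary diag store} to see how the storability configurations evolve at each reflection, and \cref{GenJFaddsimple} for the effect of $\Adds_v$). At the end of the walk, the storability at each vertex $q$ depends only on whether $q \in \B$, $q \in \E$, both, or neither, yielding exactly the pattern claimed.

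The main obstacle, as in the proof of \cref{thm:CJRlinear}, is to arrange the algorithm so that at every reflection step the hypothesis \ref{nabla} of \cref{mutationCJR} is satisfied, and so that every $\Adds_v$ step is performed at a moment when the generic Jordan form is strongly $(\boxminus, \boxminus)$-storable at $v$ (hypothesis \ref{star} of \cref{addsimple2}). This requires careful combinatorial bookkeeping to interleave reflections and additions in the correct order relative to the target orientation $Q$. Finally, the reduction from an arbitrary effective shifted interval bipartition to a complete one can be handled as at the end of the proof of \cref{thm:CJRlinear}, by restricting to the subinterval $\{\min(\B), \ldots, \max(\E)\}$ and using that a subcategory of a canonically Jordan recoverable category is itself canonically Jordan recoverable.
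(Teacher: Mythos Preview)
Your proposal takes a genuinely different route from the paper, and as stated it has a real gap.

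The paper does \emph{not} redo the algorithmic construction of \cref{algo:catCJR} for a general orientation. Instead it argues by induction on the orientation of $Q$, with \cref{thm:CJRlinear} (the case $Q=\overrightarrow{A_n}$) as the base case. For the inductive step, given a source $v$ of $Q$ and $\Xi=\sigma_v(Q)$, the paper does not try to build $\mathscr{C}_\Xi(\B,\E)$ from scratch; rather it invokes the toggle machinery of \cref{ss:reflk}, specifically \cref{cor:antrefl}, to produce a \emph{different} effective shifted interval bipartition $(\B',\E')=\tog_v(\B,\E)$ with the property that $\mathscr{C}_\Xi(\B,\E)$ equals either $\mathcal{R}_v^-(\mathscr{C}_Q(\B',\E'))$ or $\Adds_v(\mathcal{R}_v^-(\mathscr{C}_Q(\B',\E')))$. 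The inductive hypothesis, applied to $(\B',\E')$ over $Q$, then supplies exactly the storability configuration at $v$ needed to feed \cref{mutationCJR} and (when relevant) \cref{addsimple2}, and \cref{lem:elementary diag store} tracks how the storability pattern transforms into the claimed one for $(\B,\E)$. The point is that the entire combinatorial bookkeeping is packaged once and for all into the toggle action $\tog_v$ on bipartitions, so each inductive step is a single reflection plus possibly a single $\Adds_v$.

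Your plan, by contrast, proposes to run an analogue of \cref{algo:catCJR} whose reflection sequence is redirected toward the target $Q$. The difficulty you flag as the ``main obstacle'' is not merely bookkeeping: \cref{algo:catCJR} is not a path of reflections from one orientation to another but an $O(n^2)$-step schedule whose intermediate quivers $Q_{i,j}$ have a very particular shape, and the intricate hypothesis $(\mathbf{H}_{i,j})$ in the proof of \cref{thm:CJRlinear} is tailored to that shape. You give no indication of what the corresponding schedule and invariant would be for an arbitrary target $Q$, nor any argument that the \ref{star} and \ref{nabla} hypotheses can be maintained along such a schedule. Without that, the proposal is a hope rather than a proof. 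The paper's approach sidesteps this entirely: it pays the algorithmic price once, in the linear case, and then passes to other orientations one reflection at a time via $\tog_v$.
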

			\begin{proof}[Proof]
				Note that we can go from $\overrightarrow{A_n}$ to any $A_n$ type quiver by a sequence of mutations only done at sources. Using that fact, we will prove our desired result by induction.
				
				First, we know that for $Q = \overrightarrow{A_n}$, the claim is valid by \cref{thm:CJRlinear}. 
				
				Assume now that for a fixed $A_n$ type quiver $Q$, the same is true. Let $v$ be a source of $Q$ and put $\Xi = \sigma_v(Q)$. We will prove that the same goes for $\Xi$. 
				
				Let $(\B,\E)$ be an effective shifted interval bipartition of $\{1, \ldots, n\}$. We want to prove that $\mathscr{C}_\Xi(\B, \E)$ is canonically Jordan recoverable and for all $X \in \mathscr{C}_\Xi(\B,\E)$, $\GenJF(X)$ satisfies the announced storability conditions. By \cref{cor:antrefl}, we know that there exists $(\B', \E') = \tog_v(\B,\E)$ an effective shifted interval bipartition of $\{1, \ldots, n\}$ such that either $\mathscr{J}(\B,\E) = \refl_v(\mathscr{J}(\B',\E'))$ if $v \notin \B \cap \E$, or $\mathscr{J}(\B,\E) = \refl_v(\mathscr{J}(\B',\E')) \cup \{\llbracket v \rrbracket\}$ otherwise. 
				\begin{enumerate}[label = $\bullet$]
					
					\item If $v \notin \B \cap \E$, then $\mathscr{C}_\Xi(\B,\E) = \mathcal{R}_v^-(\mathscr{C}_Q(\B',\E'))$. By induction, we know that $\mathscr{C}_Q(\B',\E')$ is canonically Jordan recoverable, and for all $X \in \mathscr{C}_Q(\B',\E')$, $\GenJF(X)$ is  either $(\boxplus, \boxminus)$-storable, $(\boxminus, \boxplus)$-storable, or $(\boxminus, \boxminus)$-storable at $v$. 
					
					In the first two cases, we conclude by \cref{addsimple2} and \cref{lem:storinter}.
					
					In the last case, this means $v \in \B' \cap \E'$. By the induction hypothesis, for all $Y \in \mathscr{C}_Q(\B',\E') $, $\GenJF(Y)$ is $(\boxminus, \boxminus)$-storable at $v$. By considering $\mathscr{D}^v_{Q}(\B',\E')$ the subcategory of $\mathscr{C}_Q(\B',\E')$ generated by modules without $S_v$ in its summands, using \cref{GenJFaddsimple}, we obtain that, for all $Z \in \mathscr{D}^v_{Q}(\B',\E')$, $\GenJF(Z)$ is strongly $(\boxminus, \boxminus)$-storable at $v$, By \cref{mutationCJR}, $\mathscr{C}_\Xi(\B,\E) = \mathcal{R}_v^-(\mathscr{D}^v_Q(\B',\E'))$ is canonically Jordan recoverable and, by \cref{lem:elementary diag store} \ref{ediag4}, for all $X \in \mathscr{C}_\Xi(\B,\E)$, $\GenJF(X)$ is $(\boxplus, \boxplus)$-storable at $v$. 
					
					The only other storability conditions that change from 
					$\mathscr{C}_Q(\B',\E')$ to $\mathscr{C}_\Xi(\B, \E)$ are at $v-1$ and at $v+1$. If $v-1 \in \E' \setminus \B'$, then the $(\boxminus, \boxplus)$-storability condition satisfied by $\GenJF(Z)$, for $Z \in \mathscr{C}_Q(\B',\E')$, becomes a $(\boxminus, \boxminus)$-storability condition satisfied by $\GenJF(X)$ for $X \in \mathscr{C}_\Xi(\B,\E)$. This corresponds with $v-1 \in \B \cap \E$. We can treat similarly the case where we go from $(\boxplus, \boxplus)$-storability condition to $(\boxplus, \boxminus)$-storability condition at $v-1$, if $v-1 \in \B' \cap \E'$. A similar, symmetric argument yields the same result at $v+1$, considering the two possible changes in storability conditions.
					
					This completes the proof of the induction step in this case.
					
					\item If $v \in \B \cap \E$, $\mathscr{C}_\Xi(\B,\E) = \Adds_v( \mathcal{R}_v^-(\mathscr{C}_Q(\B',\E')))$. Using the definition of $(\B',\E')$, we have that $v \notin \B' \cup \E'$ and $v-1 \in \E'$ or $v+1 \in \B'$. Therefore, the induction hypothesis allows us to state that for all $X \in \mathscr{C}_Q(\B',\E')$, $\GenJF(X)$ is $(\boxplus, \boxplus)$-storable at $k$. 
					
					Following \cref{thm:reflectionGenRep} and \cref{lem:elementary diag store}, we get that $\mathcal{R}_v^-(\mathscr{C}_Q(\B',\E'))$ is canonically Jordan recoverable and for any representation $Y$ in this category, $\GenJF(Y)$ is strongly $(\boxminus, \boxminus)$-storable at $v$. Hence by \cref{addsimple2}, we get that $\mathscr{C}_\Xi(\B, \E)$ is canonically Jordan recoverable and for any representation $Z$ in it we have that $\GenJF(Z)$ is $(\boxminus, \boxminus)$-storable at $v$.
					
					By analogous arguments to those given in the previous point, we can deduce that the storability conditions satisfied by $\GenJF(X)$ for all $X \in \mathscr{C}_\Xi(\B, \E)$ are the ones we claimed.
					
					This completes the proof of the induction step in this case
				\end{enumerate}
				Thus, we have proved the induction step, and so we have proved the wished-for result.
			\end{proof}
			We can now prove the main result of this paper.
			\begin{proof}[Proof of \cref{maintheorem}] 
				Fix $Q$ a quiver of $A_n$ type. Let $\mathscr{C}$ be a subcategory of $\rep(Q)$. By \cref{nec cond CJR1}, we already know that if $\mathscr{C}$ is canonically Jordan recoverable, then  $\Int(\mathscr{C})$ is adjacency-avoiding.
				
				Now assume that $\Int(\mathscr{C})$ is adjacency-avoiding. Using \cref{themaxavoid}, there exists a complete shifted bipartition $(\B,\E)$ of $\{1, \ldots,n\}$ such that $\Int(\mathscr{C}) \subset \mathscr{J}(\B,\E)$, and $\mathscr{C}$ is therefore a subcategory of $\mathscr{C}_{Q}(\B,\E)$. By \cref{thm:CJR}, we know that $\mathscr{C}_{Q}(\B,\E)$ is canonically Jordan recoverable. Thus, so is $\mathscr{C}$.
			\end{proof}
			
			\section{To go further}
			\label{ss:tgf}
			We could ask ourselves some questions based on this work.
			
			\begin{itemize}
				\item[$\bullet$] \textit{Can we translate the adjacency-avoiding property for intervals into another algebraic property for subcategories of $\rep(Q)$ for any $A_n$ type quiver $Q$?}
			\end{itemize}
			
			First, the following simple lemma allows us to translate the adjacency property into the algebraic world.
			\begin{lemma} \label{lem:adjnonsplit}
				Let $Q$ be an $A_n$ type quiver. Let $K, L \in \mathcal{I}_n$. Then $K$ and $L$ are adjacent if and only if there is a short exact sequence with end terms $X_K$ and $X_L$ in some order and an indecomposable middle term.
			\end{lemma}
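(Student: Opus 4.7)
The plan is to prove both directions separately, with the ``only if'' direction being the quick one via dimension-vector arithmetic, and the ``if'' direction being an explicit construction guided by \cref{thm:indecandmorphtypeA}.

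For the ``only if'' direction, suppose there is a short exact sequence $0 \to X_K \to M \to X_L \to 0$ (the other ordering is symmetric) with $M$ indecomposable. By \cref{thm:indecandmorphtypeA}\ref{indectypeA}, $M \cong X_J$ for some $J \in \mathcal{I}_n$, and additivity of dimension vectors on short exact sequences gives $\vdim(X_J) = \vdim(X_K) + \vdim(X_L)$. Reading this equality vertex by vertex, since each component of $\vdim(X_J)$ is either $0$ or $1$, one forces $K \cap L = \varnothing$ and $K \cup L = J$. Since $J$ is an interval and $K, L$ are disjoint nonempty intervals whose union is $J$, either $b(L) = e(K)+1$ or $b(K) = e(L)+1$, i.e.\ $K$ and $L$ are adjacent.

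For the ``if'' direction, assume $K$ and $L$ are adjacent and, without loss of generality, that $b(L) = e(K)+1$. Set $J = K \cup L \in \mathcal{I}_n$, which is again an interval. Look at the unique arrow of $Q$ between $e(K)$ and $b(L)$. If it points $b(L) \to e(K)$, then \cref{def:intervalbotandtop} tells us that $K$ is below $J$ (with witness $K$ itself) and $L$ is above $J$ (with witness $L$ itself), so \cref{thm:indecandmorphtypeA}\ref{morphtypeA} produces canonical morphisms $\phi : X_K \to X_J$ and $\psi : X_J \to X_L$ that are the identity on each vertex of $K$, respectively $L$, and zero elsewhere. If instead the arrow points $e(K) \to b(L)$, the symmetric check shows $L$ is below $J$ and $K$ is above $J$, and we obtain morphisms $X_L \to X_J \to X_K$ of the same form. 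In either case, the candidate sequence is $0 \to X_A \to X_J \to X_B \to 0$ with $\{A,B\} = \{K,L\}$, where both maps are the identity or zero at each vertex.

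To conclude, I would verify exactness pointwise: at vertices of $A$, the sequence is $0 \to \mathbb{K} \xrightarrow{\Id} \mathbb{K} \to 0 \to 0$; at vertices of $B$, it is $0 \to 0 \to \mathbb{K} \xrightarrow{\Id} \mathbb{K} \to 0$; and at vertices outside $J$ it is the zero sequence. Since $K \cap L = \varnothing$, the composition $X_A \to X_J \to X_B$ is zero on every vertex, and the kernel/image matching is immediate. Thus the sequence is exact with indecomposable middle term $X_J$. The only subtlety, and the main bookkeeping point to get right, is precisely the orientation-dependent choice of which of $X_K, X_L$ plays the role of subobject versus quotient; once that case distinction is made carefully using \cref{def:intervalbotandtop}, the rest reduces to the trivial vertex-by-vertex exactness check.
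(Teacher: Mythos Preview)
The paper does not actually supply a proof of \cref{lem:adjnonsplit}; it is stated as a ``simple lemma'' and left to the reader. Your argument fills in exactly the details one would expect: dimension-vector arithmetic for one implication, and an explicit construction of the short exact sequence $0 \to X_A \to X_J \to X_B \to 0$ with $J = K \cup L$ for the other, using \cref{thm:indecandmorphtypeA} and \cref{def:intervalbotandtop} to get the orientation-dependent ordering of the end terms. The mathematics is correct and complete.

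One cosmetic point: you have the ``if'' and ``only if'' labels reversed. In ``$K$ and $L$ are adjacent if and only if there is a short exact sequence \ldots'', the \emph{if} direction assumes the existence of the sequence and deduces adjacency, while the \emph{only if} direction assumes adjacency and produces the sequence. Your two paragraphs prove exactly these implications but with the names swapped. Fixing the labels (or simply writing ``($\Leftarrow$)'' and ``($\Rightarrow$)'') resolves this.
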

			The following theorem follows as a direct consequence of \cref{maintheorem} and \cref{lem:adjnonsplit}.
			\begin{theorem} \label{thm:repcharacterisation}
				Let $Q$ be a quiver of $A_n$ type. Fix a collection of indecomposable representations $\mathscr{X} \subseteq \Ind(Q)$. Then $\add(\mathscr{X})$ is canonically Jordan recoverable if and only if for all $X,Y \in \mathscr{X}$ and for all short exact sequences
				$$0 \longrightarrow X \longrightarrow E \longrightarrow Y \longrightarrow 0,$$ the representation $E$ is not indecomposable.
			\end{theorem}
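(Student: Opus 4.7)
The plan is to deduce the theorem directly by combining \cref{maintheorem} with \cref{lem:adjnonsplit}, so the proof is essentially a translation between the combinatorial and the representation-theoretic languages. First I would set $\mathscr{J} = \Int(\add(\mathscr{X})) = \{K \in \mathcal{I}_n \mid X_K \in \mathscr{X}\}$. By \cref{maintheorem}, $\add(\mathscr{X}) = \Cat_Q(\mathscr{J})$ is canonically Jordan recoverable if and only if $\mathscr{J}$ is adjacency-avoiding, meaning no two intervals $K, L \in \mathscr{J}$ are adjacent.

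Next I would apply \cref{lem:adjnonsplit}, which says that for intervals $K, L \in \mathcal{I}_n$, adjacency of $K$ and $L$ is equivalent to the existence of a short exact sequence with end terms $X_K, X_L$ (in some order) and indecomposable middle term. Negating both sides of the equivalence, $\mathscr{J}$ is adjacency-avoiding if and only if, for every pair $K, L \in \mathscr{J}$ (allowing $K = L$ trivially), no short exact sequence between $X_K$ and $X_L$ has an indecomposable middle term. Rephrasing this last condition in terms of $\mathscr{X}$ gives exactly the condition appearing in the theorem.

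The hard part is essentially already absorbed into the two cited results: \cref{maintheorem} supplies the combinatorial characterisation, and \cref{lem:adjnonsplit} supplies the algebraic reinterpretation of adjacency. The only minor point to check is that one may assume $K \neq L$ in \cref{lem:adjnonsplit}, which is harmless since an interval is never adjacent to itself, and the ``some order'' in \cref{lem:adjnonsplit} accounts symmetrically for both directions $b(K) = e(L)+1$ and $b(L) = e(K)+1$ in the definition of adjacency. No further computation is needed beyond stringing these two equivalences together.
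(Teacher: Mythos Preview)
Your proposal is correct and follows exactly the approach indicated in the paper, which states the theorem as a direct consequence of \cref{maintheorem} and \cref{lem:adjnonsplit} without giving any further argument. Your write-up simply spells out the chaining of these two equivalences, including the harmless check that the case $K=L$ causes no trouble.
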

			Thus, for any $T \in \rep(Q)$ a tilting representation, $\add(T)$ is canonically Jordan recoverable. But the interaction between canonically Jordan recoverable subcategories and tilting representations does not end here. We formulate an exact statement as the next conjecture.
			
			Let $Q$ be an $A_n$ type quiver, and $T$ a tilting representation of $Q$. Write $T = T_1 \oplus \ldots, \oplus T_n$ for the decomposition of $T$. For all $i \in \{1,\ldots, n\}$, we define 
			the \new{mutation of $T$ by $T_i$}, denoted $\mu_{T_i}(T)$, to be, if it is possible, the unique tilting representation (up to isomorphism) isomorphic to $ T_1 \oplus \ldots \oplus T_i' \oplus \ldots T_n$ such that $T_i' \ncong T_i$, otherwise $\mu_{T_i}(T) = T$. Riedtmann and Schofield proved that $T_i'$ can be obtained from $U_i = T_1 \oplus \ldots \oplus T_{i-1} \oplus T_{i+1} \oplus T_n$ and $T_i$ as either the kernel or the cokernel of a minimal $\add(U_i )$-approximation (see \cite{RS91}). 
			
			Following this result, in our case, we can divide the non-trivial mutations into two kinds:
			\begin{enumerate}[label=$\bullet$]
				\item the \new{$1$-term mutations} in the two following cases:
				\begin{enumerate}[label = $\bullet$]
					\item there exists $i \neq k \in \{1, \ldots, n\}$ such that $T_i \longrightarrow T_k$ is a minimal left $\add(U_i)$-approximation, and $T_i' = \Coker(T_i \longrightarrow T_k)$.
					\item there exists $i \neq k \in \{1, \ldots, n\}$ such that $T_k \longrightarrow T_i$ is a minimal right $\add(U_i)$-approximation, and $T_i' = \Ker(T_k \longrightarrow T_i)$.
				\end{enumerate} 
				\item the \new{$2$-terms mutations} in the two following cases:
				\begin{enumerate}[label = $\bullet$]
					\item there exist $\ell,k \in \{1, \ldots, n\}$ such that $\ell \neq k$, $\ell \neq i \neq k$ and $T_i \longrightarrow T_k \oplus T_\ell$ is a minimal left $\add(U_i)$-approximation, and $T_i' = \Coker(T_i \longrightarrow T_k \oplus T_\ell)$.
					\item there exist $\ell, k \in \{1, \ldots, n\}$ such that $\ell \neq k$, $\ell \neq i \neq k$ and $T_k \oplus T_\ell \longrightarrow T_i$ is a minimal right $\add(U_i)$-approximation, and $T_i' = \Ker(T_k \oplus T_\ell \longrightarrow T_i)$.
				\end{enumerate} 
			\end{enumerate}
			Note that $T_i$ can admit both a left and a right $\add(U_i)$-approximation, but only one of them defines the summand $T_i'$ used to construct $\mu_{T_i}(T)$.
			
			Now we can state our conjecture.
			\begin{conj}\label{conj:tilt}
				Let $Q$ be an $A_n$ type quiver. 
				\begin{enumerate}[label=$(\alph*)$]
					\item For any tilting representation $T \in \rep(Q)$, there exists a unique maximal canonically Jordan recoverable subcategory $\mathscr{C}$ such that $T \in \mathscr{C}$;
					
					\item For any maximal canonically Jordan recoverable subcategory $\mathscr{C}$ of $\rep(Q)$, and for any tilting representation $T \in \mathscr{C}$, $\mathscr{C}$ is additively generated by indecomposable summands of tilting representations that can be obtained by a (finite) sequence of $2$-term mutations from $T$.
				\end{enumerate}
			\end{conj}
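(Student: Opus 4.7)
The approach is to combine the characterization of maximal canonically Jordan recoverable subcategories as $\mathscr{C}_Q(\B,\E)$ for complete shifted bipartitions $(\B,\E)$ (via \cref{maintheorem} together with \cref{themaxavoid}) with classical tilting mutation theory. Given a tilting representation $T$ with indecomposable summands $T_1, \ldots, T_n$ corresponding to intervals $K_1, \ldots, K_n$, set $\B_T = \{b(K_i) \mid 1 \leqslant i \leqslant n\}$ and $\E_T = \{e(K_i) \mid 1 \leqslant i \leqslant n\}$. I would reduce part $(a)$ to showing that $(\B_T, \E_T)$ is itself a complete shifted bipartition of $\{1, \ldots, n\}$, and part $(b)$ to showing that $2$-term mutations preserve this pair while the collective summands of tilting modules in the $2$-term mutation orbit of $T$ cover $\mathscr{J}(\B_T, \E_T)$.

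For part $(a)$, the adjacency-avoiding property of $\{K_1, \ldots, K_n\}$ follows from \cref{lem:adjnonsplit} together with the rigidity of $T$, yielding $\B_T \cap \E_T[1] = \varnothing$. The key remaining step is $\B_T \cup \E_T[1] = \{1, \ldots, n+1\}$, equivalently $|\B_T| + |\E_T| = n + 1$. My plan is to prove this by induction on $n$, using a tilting mutation to reduce to a tilting module on a smaller quiver: first mutate $T$ to a tilting module containing a simple summand $S_v$ at some source or sink $v$ of $Q$ (such simple summands are always reachable in type $A$ by inspection of the Auslander--Reiten quiver), then remove $S_v$ to obtain a tilting module on an $A_{n-1}$-type subquiver, and track carefully how removing $S_v$ modifies $(\B_T, \E_T)$. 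An alternative path uses the classical bijection between tilting modules in type $A_n$ and triangulations of a convex $(n+2)$-gon, under which $|\B_T| + |\E_T|$ admits a geometric interpretation in terms of non-ear vertices.

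For part $(b)$, I would split the argument into two sub-claims. First, that a $2$-term mutation preserves $(\B_T, \E_T)$: from the exchange sequence $0 \to T_i \to T_k \oplus T_\ell \to T_i' \to 0$ (or its dual), the additivity of dimension vectors combined with the interval structure of type $A$ indecomposables forces $\{b(K_i), b(K_i')\}$ to coincide with $\{b(K_k), b(K_\ell)\}$ as a multiset, and similarly for the right endpoints, so that neither $\B_T$ nor $\E_T$ changes. Second, the $2$-term mutation orbit of $T$ exhausts $\mathscr{J}(\B_T, \E_T)$: this reduces to a combinatorial statement about the sub-graph of tilting mutations preserving the fixed boundary partition, where one must establish both connectivity of this sub-graph and the fact that every interval in $\mathscr{J}(\B_T, \E_T)$ appears as a summand of some tilting module in this component.

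The main obstacle I anticipate is the coverage-and-connectivity claim in $(b)$. Coverage can likely be handled by explicit constructions producing $2$-term mutation sequences that insert any prescribed interval of $\mathscr{J}(\B_T, \E_T)$ into the tilting module. Connectivity of the restricted $2$-term mutation sub-graph is more delicate, since we are only allowed flips that do not alter the boundary data $(\B,\E)$, and the full flip graph of tilting modules is already intricate. A promising strategy is to equip the tilting modules inside each maximal canonically Jordan recoverable subcategory with a partial order refining a Cambrian or Tamari-type lattice on all tilting modules of $Q$, whose covering relations correspond precisely to $2$-term mutations, and then extract connectivity from lattice-theoretic arguments or from standard combinatorial properties of restricted flip graphs.
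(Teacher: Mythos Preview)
The statement you are attempting to prove is labeled \cref{conj:tilt} in the paper and is explicitly presented as a \emph{conjecture}, not a theorem. The paper offers no proof; the conjecture appears in the closing section ``To go further'' as a direction for future work, followed only by the remark that ``this result could open a way to characterize algebraically the canonically Jordan recoverable subcategories for, at least, Dynkin quivers.'' There is therefore no proof in the paper against which to compare your proposal.

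That said, a few comments on your outline as an attack on the open problem. Your reduction of part $(a)$ to the claim that $(\B_T,\E_T)$ is a complete shifted bipartition is sound: once that holds, any complete shifted bipartition $(\B,\E)$ with $T \in \mathscr{C}_Q(\B,\E)$ satisfies $\B_T \subseteq \B$ and $\E_T \subseteq \E$, and disjointness of $\B$ and $\E[1]$ together with $|\B_T|+|\E_T|=n+1$ forces equality. For part $(b)$, your claim that $2$-term mutations preserve $(\B_T,\E_T)$ is plausible, but the multiset argument as stated is not quite right: the endpoints $b(K_i')$, $e(K_i')$ need not be drawn from $\{b(K_k),b(K_\ell)\}$ and $\{e(K_k),e(K_\ell)\}$ respectively; what you actually need is that the new endpoints already occur among the $b(K_j)$, $e(K_j)$ for some other summand $T_j$, which requires a slightly more careful case analysis of the approximation sequence. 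The connectivity-and-coverage step you flag as the main obstacle is indeed the heart of the matter, and your idea of locating a restricted Cambrian or Tamari-type structure on the tilting modules inside a fixed $\mathscr{C}_Q(\B,\E)$ is a reasonable line of attack, but it remains a programme rather than a proof.
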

			This result could open the way to algebraically characterizing the canonically Jordan recoverable subcategories for at least Dynkin quivers. 
			\begin{remark} \label{rem:ralf}
				In \cref{thm:repcharacterisation}, we can see a kind of complementarity with the notion of \textit{maximal almost rigid} modules \cite{BGMS23}: they are defined as modules $M = \bigoplus_{i=1}^s M(i)$ where $M(i) \in \Ind(Q)$ give a maximal collection of indecomposable representations such that for all $1 \leqslant i,j \leqslant s$ and all nonsplit short exact sequence $$0 \longrightarrow M(i) \longrightarrow E \longrightarrow M(j) \longrightarrow 0,$$ the representation $E$ is indecomposable.
			\end{remark}
			
			This conjecture is attacked using exact structure interpretations in \cite{DR25}, offering, by the way, a deeper interaction between combinatorial and categorical tools.
			
			\begin{itemize}
				\item[$\bullet$] \textit{Can we hope to characterize the Jordan recoverable categories of $\rep(Q)$ for $Q$ of $A_n$ type?}
			\end{itemize}
			
			Let $\mathscr{J} \subset \mathcal{I}_n$ and $L \in \mathcal{I}_n$. A \new{$\mathscr{J}$-partition of $L$} is a partition $\{T_1, \ldots, T_p\}$ of $L$ such that for all $i \in \{1, \ldots, p\}$, we have $T_i \in \mathscr{J}$.
			\begin{prop}
				Let $\mathscr{J} \subseteq \mathcal{I}_n$ such that there exists an interval $L \in \mathcal{I}_n$ admitting two distinct $\mathscr{J}$-partitions. Then $\Cat_Q(\mathscr{J})$ is not a Jordan recoverable category of $\rep(Q)$.
			\end{prop}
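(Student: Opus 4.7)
The strategy is to exhibit two non-isomorphic representations in $\Cat_Q(\mathscr{J})$ sharing the same generic Jordan form data, using \cref{lem:GenJFdisjunion} as the essential input.

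Let $\mathcal{T} = \{T_1, \ldots, T_p\}$ and $\mathcal{U} = \{U_1, \ldots, U_q\}$ be two distinct $\mathscr{J}$-partitions of $L$. By definition, each $T_i$ and each $U_j$ lies in $\mathscr{J}$, the $T_i$ are pairwise disjoint with $T_1 \cup \ldots \cup T_p = L$, and similarly for the $U_j$. Set
\[ X = X_{T_1} \oplus \cdots \oplus X_{T_p} \qquad \text{and} \qquad Y = X_{U_1} \oplus \cdots \oplus X_{U_q}. \]
Both $X$ and $Y$ belong to $\Cat_Q(\mathscr{J})$ by construction.

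Since $\mathcal{T}$ is a partition of the interval $L$ into disjoint intervals, \cref{lem:GenJFdisjunion} applies and yields $\GenJF(X) = (\lambda^q)_{q \in Q_0}$ with $\lambda^q = (1)$ for $q \in L$ and $\lambda^q = (0)$ otherwise. The same reasoning applied to $\mathcal{U}$ gives the identical tuple for $\GenJF(Y)$. Hence $\GenJF(X) = \GenJF(Y)$.

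It remains to see that $X \ncong Y$. By \cref{thm:indecandmorphtypeA} \ref{indectypeA}, the indecomposable summands $X_{T_i}$ (respectively $X_{U_j}$) are pairwise non-isomorphic precisely when the corresponding intervals are distinct, and more generally two direct sums of the $X_K$ are isomorphic if and only if the underlying multisets of intervals coincide. Since $\mathcal{T}$ and $\mathcal{U}$ are distinct partitions of $L$, the multisets $\{T_1, \ldots, T_p\}$ and $\{U_1, \ldots, U_q\}$ differ, so by the Krull--Schmidt property $X \ncong Y$. Thus $\Cat_Q(\mathscr{J})$ contains two non-isomorphic representations with the same generic Jordan form data, proving that it is not Jordan recoverable. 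No real obstacle is anticipated here; the argument is a direct application of \cref{lem:GenJFdisjunion} together with unique decomposition.
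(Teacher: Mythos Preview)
Your argument is correct and is exactly the intended one: the paper's own proof is a single sentence pointing to \cref{lem:GenJFdisjunion}, and you have spelled out precisely the details it leaves implicit. One cosmetic remark: you reuse $q$ both for the number of parts of $\mathcal{U}$ and as the running vertex index in $(\lambda^q)_{q\in Q_0}$; renaming one of them would avoid the clash.
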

			\begin{proof}[Proof]
				This result is a consequence of \cref{lem:GenJFdisjunion}.
			\end{proof}
			As with the adjacency-avoiding interval subsets and the canonical Jordan recoverable categories, the following notion seems to play a significant role in determining all Jordan recoverable categories.
			\begin{definition}
				Let $\mathscr{J} \subset \mathcal{I}_n$. We say that $\mathscr{J}$ is \new{double interval partition-avoiding} if any $L \in \mathcal{I}_n$ admits at most one $\mathscr{J}$-partition.
			\end{definition}
			\begin{ex}
				For all $n \in \mathbb{N}^*$, $\mathscr{J} = \{\llrr{i} \mid i \in \{1, \ldots, n\}\}$ is double interval partition-avoiding. Note also that this is a maximal one (with respect to inclusion).
			\end{ex}
			\begin{conj} \label{conj:allJRcatA} Let $Q$ be an $A_n$ type quiver. A subcategory $\mathscr{C} \subset \rep(Q)$ is Jordan recoverable if and only if $\Int(\mathscr{C})$ is double interval partition-avoiding.
			\end{conj}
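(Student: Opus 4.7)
I split the biconditional into its two directions.

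Necessity follows directly from \cref{lem:GenJFdisjunion}. Suppose $\mathscr{J} = \Int(\mathscr{C})$ is not double interval partition-avoiding, so some $L \in \mathcal{I}_n$ admits two distinct $\mathscr{J}$-partitions $\pi \neq \pi'$. After discarding the intervals common to both (what remains still partitions a common subset of $L$), I may assume $\tilde\pi \cap \tilde\pi' = \varnothing$ as sets of intervals, while $\bigcup_{T \in \tilde\pi} T = \bigcup_{U \in \tilde\pi'} U =: L'$. The representations $X = \bigoplus_{T \in \tilde\pi} X_T$ and $Y = \bigoplus_{U \in \tilde\pi'} X_U$ lie in $\mathscr{C}$ and are non-isomorphic by Krull--Schmidt, since their multisets of indecomposable summands differ. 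Both $\tilde\pi$ and $\tilde\pi'$ consist of pairwise disjoint intervals, so \cref{lem:GenJFdisjunion} forces $\GenJF(X) = \GenJF(Y)$, namely the tuple with $\lambda^q = (1)$ for $q \in L'$ and $\lambda^q = (0)$ otherwise. Hence $\mathscr{C}$ is not Jordan recoverable.

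For sufficiency, assume $\mathscr{J}$ is double interval partition-avoiding. The goal is to show the reconstruction map $\Phi \colon (m_K)_{K \in \mathscr{J}} \mapsto \GenJF\bigl(\bigoplus_K X_K^{m_K}\bigr)$ is injective. When $\mathscr{J}$ is in addition adjacency-avoiding, \cref{maintheorem} already yields canonical (hence ordinary) Jordan recoverability, so the new content concerns families $\mathscr{J}$ containing adjacent intervals but no forbidden double partition. My plan is an induction on $|\mathscr{J}|$ that exploits the Greene--Kleitman description $\GenJF(X) = \GK(X)$ from \cref{thm:GenJFexist}: pick an extremal interval $K^\ast \in \mathscr{J}$, identify a coordinate $(q,i)$ at which $\GenJF(X)^q_i$ depends on $m_{K^\ast}$ with coefficient one and otherwise involves only multiplicities of intervals strictly simpler or already recovered, solve for $m_{K^\ast}$, and recurse. \cref{ex:JR} illustrates the strategy concretely: in $\mathscr{C}_2$ one first reads $b = \GenJF(X)^2_1$, then $a = \GenJF(X)^1_1 - b$ and $c = \GenJF(X)^3_1 - b$.

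The main obstacle is the compatibility between the inductive reduction and the combinatorial hypothesis. Removing $K^\ast$ from $\mathscr{J}$ may create new double partitions in $\mathscr{J} \setminus \{K^\ast\}$ (if $K^\ast$ was blocking them as an obligatory part), so the induction will likely need to be run on a stronger invariant---for instance $|\supp_Q(\mathscr{J})|$ refined by a lexicographic tie-breaker, or on the containment poset of $\mathscr{J}$---or else the reduction must happen at the level of multiplicity vectors rather than interval sets. More ambitiously, one can try to exhibit for each $K \in \mathscr{J}$ an explicit closed formula $m_K = \sum_{(q,i)} \varepsilon_{K,q,i}\, \GenJF(X)^q_i$ with integer coefficients, in the spirit of a Möbius inversion over a subposet of $(\mathscr{J}, \subseteq)$. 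The hardest step will be showing that the double interval partition-avoiding hypothesis is exactly what makes such a formula exist: intuitively, every obstruction to recovering $(m_K)$ from $\GenJF$ must collapse to two distinct multiplicity vectors with equal $\GK$-invariant, and by Lemma \ref{lem:GenJFdisjunion} the simplest such obstruction is precisely a pair of competing $\mathscr{J}$-partitions of a common support---so one needs to argue that no subtler obstruction can arise.
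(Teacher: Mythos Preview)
The statement you are attempting to prove is \cref{conj:allJRcatA}, which the paper explicitly leaves as an open \emph{conjecture}; the sentence immediately following it reads ``We hope to prove this result in the near future.'' So there is no proof in the paper to compare your sufficiency argument against.

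The necessity direction is indeed established in the paper, as the unnumbered Proposition immediately preceding the conjecture, and its one-line proof (``This result is a consequence of \cref{lem:GenJFdisjunion}'') is exactly your argument. Your extra step of discarding common intervals is harmless but unnecessary: since $\pi \neq \pi'$ are already partitions of the same interval $L$ into pairwise disjoint members of $\mathscr{J}$, the representations $\bigoplus_{T \in \pi} X_T$ and $\bigoplus_{U \in \pi'} X_U$ are non-isomorphic by Krull--Schmidt and share the same generic Jordan form data by \cref{lem:GenJFdisjunion} applied directly.

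For the sufficiency direction, your proposal is honest about its status: it is a strategy, not a proof. You correctly identify the adjacency-avoiding case as settled by \cref{maintheorem}, and you correctly flag the main obstruction to a naive induction on $|\mathscr{J}|$ (removing an interval need not preserve the hypothesis). But none of the three approaches you outline---the refined induction, the explicit M\"obius-type inversion, or the reduction of all obstructions to the double-partition form---is carried out, and the last sentence essentially restates the conjecture as what ``one needs to argue''. Since the paper itself does not supply a proof of this direction, your sketch is not wrong so much as it is, like the paper, incomplete on this point.
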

			We hope to prove this result in the near future.
			\begin{itemize}
				\item[$\bullet$] \textit{Can we hope to extend the definition of the Greene--Kleitman invariant on representations of string quivers?}
			\end{itemize}
			
			Let us define a string quiver.
			\begin{definition}
				A \new{string quiver} is a pair $(Q,R)$ where $Q$ is a finite connected quiver and $R$ is a set of monomial relations of degree $2$ such that :
				\begin{enumerate}[label = $\bullet$]
					\item all the vertices in $Q$ admit at most two ingoing arrows and at most two outgoing arrows;
					\item for any $\alpha,\beta, \gamma \in Q_1$ such that $\beta \alpha$ and $\gamma \alpha$ are paths of $Q$, then $$\{\beta\alpha, \gamma\alpha\} \cap R \neq \varnothing;$$
					
					\item for any $\alpha,\beta, \gamma \in Q_1$ such that $ \alpha \beta$ and $ \alpha \gamma$ are paths of $Q$, then $$\{\alpha \beta, \alpha \gamma\} \cap R \neq \varnothing.$$
				\end{enumerate}
			\end{definition}
			A representation of a string quiver $(Q,R)$ is a representation $X \in \rep(Q)$ such that, for any $\alpha, \beta \in Q_1$ with $\beta \alpha \in R$, we have $X_\beta X_\alpha = 0$. We denote by $\rep(Q,R)$ the finite-dimensional representations of $(Q,R)$. A \emph{string algebra} is a quotient algebra $\mathbb{K}Q/I$ where $(Q,R)$ is a string quiver and $I = \langle R \rangle$. Recall that $\rep(Q,R)$ is equivalent to the category of finitely generated (left) $\mathbb{K}Q/I$-modules.
			
			Note that, at least, if the Auslander--Reiten quiver of a string quiver $(Q,R)$ is acyclic, then we can define a similar Greene--Kleitman invariant. We can first ask how much we can extend this invariant in a larger case than the one we explore in this article.
			
			Garver, Patrias, and Thomas proved that we can define generic Jordan form data for any finite-dimensional module of any algebra \cite{GPT19}. We can therefore ask in which circumstances the two invariants coincide.
			\begin{itemize}
				\item[$\bullet$] \textit{May we expect to extend  \cref{maintheorem} for gentle, locally gentle, or string algebras?}
			\end{itemize}
			
			Recall that the idea of considering adjacency-avoiding interval subsets comes from previous work \cite{D22} for gentle algebras. It seems reasonable to extend this result to gentle and even string algebras.
			
			The reader is invited to explore these problems and related issues.
			
			\section*{Acknowledgments}
			
			I acknowledge the \textit{Institut des Sciences Mathématiques of Canada} and \textit{Engineering and Physical Sciences Research Council (EP/W007509/1)} for their partial funding support. I thank Ralf Schiffler for the week I spent in Storrs discussing this work, which led to some nice algebraic directions.
			
			I want to thank Claire Amiot, François Bergeron, and Frédéric Chapoton for the couple of pieces of advice and help they gave me in their reading of this article as part of my Ph.D. thesis. 
			
			Last, I thank my Ph.D. supervisor, Hugh Thomas, for all our discussions on this subject, his helpful advice, and his support throughout my thesis work.
			
			\bibliography{Canonically_Jordan_Recoverable_categories_Type_A_20260507}
			\bibliographystyle{alpha}
		\end{document}